\documentclass[10pt]{amsart}

\usepackage[margin=1in]{geometry} 
\usepackage{amsmath,amssymb,bm}
\usepackage{amsthm}
\usepackage{graphicx,xcolor}
\usepackage{enumitem}
\usepackage{tikz}
\usetikzlibrary{arrows.meta}
\usepackage{hyperref}

\newcommand{\Z}{\mathbb{Z}}

\newcommand{\R}{\mathbb{R}}

\newcommand{\F}{\mathbb{F}}
\newcommand{\stirling}[2]{\genfrac{[}{]}{0pt}{}{#1}{#2}} 
\newcommand{\BE}{\mathrm{BE}}

\newcommand{\Dirichlet}{\mathrm{Dirichlet}}

\newcommand{\Ic}{\mathcal{I}}
\newcommand{\Comp}{\mathrm{Comp}}
\newcommand{\Dc}{\mathcal{D}}
\newcommand{\LcDc}{\mathcal{LD}}

\newcommand{\mc}{\mathcal{M}}
\newcommand{\Oc}{\mathcal{O}}
\newcommand{\GL}{\mathrm{GL}}

\newcommand{\PF}{\mathrm{PF}}
\newcommand{\IPF}{\mathrm{IPF}}
\newcommand{\mix}{\mathrm{mix}}
\newcommand{\rel}{\mathrm{rel}}
\newcommand{\hist}{\mathrm{hist}}
\newcommand{\dens}{\mathrm{dens}}

\newcommand{\TV}{\mathrm{TV}}

\DeclareMathOperator{\Unif}{Unif}
\renewcommand{\epsilon}{\varepsilon} 
\newcommand{\Address}{{
\bigskip
\footnotesize

\textsc{Department of Mathematics, University of Southern California, Los Angeles, CA, 90089, USA}\par\nopagebreak
\textit{E-mail address}: \texttt{ifeng@usc.edu} \\

\textsc{Department of Mathematics \& Statistics, McMaster University, Hamilton, ON, L8S 4K1, Canada}\par\nopagebreak
\textit{E-mail address}: \texttt{paguyoj@mcmaster.ca}
}}

\def\bal#1\eal{\begin{align*}#1\end{align*}}
 \newcommand{\DrawDyckGrid}[1]{%
  \draw[step=1, gray!25, very thin] (0,0) grid (#1,#1);
  \draw[dashed, gray!70] (0,0)--(#1,#1);
}

\newcommand{\DrawExampleDyckPath}{%
  \draw[line width=1.1pt]
    (0,0)--(0,1)--(0,2)--(1,2)--(2,2)--(2,3)--(3,3)--(3,4)--(3,5)--(4,5)--(5,5);
  \fill (0,0) circle (1.4pt);
  \fill (5,5) circle (1.4pt);
}

\newcommand{\DyckStepLabel}[3]{%
  \node[fill=white, inner sep=1.2pt, font=\scriptsize] at (#1,#2) {#3};
}

\newtheorem{theorem}{Theorem}[section]
\newtheorem{lemma}[theorem]{Lemma}
\newtheorem{proposition}[theorem]{Proposition}
\newtheorem{corollary}[theorem]{Corollary}

\newtheorem{remark}[theorem]{Remark}

\theoremstyle{definition}

\title[Analysis of a twisted Bose--Einstein Markov chain]{Analysis of a twisted Bose--Einstein Markov chain with applications to sampling Catalan structures}
\author{Ivan Z. Feng and J. E. Paguyo}

\subjclass[2020]{60J10, 60C05}
\keywords{Markov chains, mixing times, eigenvalues, spectral radius, twisted Burnside process, Bose--Einstein chain, cut-and-paste process, Dirichlet distribution, parking functions, Dyck paths, triangulations, Catalan structures}

\begin{document}

\begin{abstract}
We analyze a twisted Bose--Einstein Markov chain on $[k]^n$ arising from the Diaconis--Zhong twisted Burnside process. We derive explicit formulas for the transition kernel and stationary distribution of both the original chain and its lumped process, and we identify this chain as a special case of Crane's cut-and-paste process. We establish bounds on the mixing time and determine the second-largest eigenvalue of the original chain and its lumped process. Complementing the fixed-$k$ cutoff theorem of Crane and Lalley, our bounds show that both chains mix in $\Theta(\log n)$ steps when $k$ grows as a fixed positive power of $n$. For the non-twisted Bose--Einstein Markov chain, this resolves a conjecture of Diaconis.
As an application, we study the Burnside processes on parking functions and labeled Dyck paths, which give novel Markov chain Monte Carlo algorithms for sampling an increasing parking function and a Dyck path, respectively, approximately uniformly at random. We show that these chains are rapidly mixing, with mixing times of $\Theta(\log n)$. 
\end{abstract}

\maketitle

\section{Introduction} 

Markov chain Monte Carlo algorithms are a class of algorithms used to sample complex probability distributions and are a mainstay in many scientific fields \cite{Dia08, Dia13}. Letting $X$ be a finite set and $\pi$ a distribution on $X$, the computational problem is to sample from $X$ according to the distribution $\pi$. Markov chain Monte Carlo provides an algorithmic method for solving this problem by running a Markov chain on $X$ whose stationary distribution is $\pi$. The practical uses of these algorithms depend on the mixing time of the Markov chain, which is the number of steps the chain must be run in order to get sufficiently close to its stationary distribution. In practice $X$ is a large set, so an efficient algorithm should have a mixing time that is much smaller than $|X|$. The mixing time analysis of Markov chain Monte Carlo algorithms remains an active research area. 

In this paper, we focus on the Burnside process, a class of Markov chain Monte Carlo algorithms used to sample unlabeled objects, combinatorial structures modulo a group of symmetries. It is a special case of the {\em auxiliary variables} Markov chain \cite{ES88}, a class of unifying algorithms which includes the {\em Swendsen-Wang algorithm} \cite{SW87}, {\em data augmentation} \cite{TW87}, and {\em hit-and-run} \cite{AD07}. These are non-local chains which are able to make big jumps in the state space in a single step. 

\subsection{Burnside Process}

Let $G$ be a finite group acting on a finite set $X$. The group action splits $X$ into disjoint {\em orbits}. Let $O_x = \{gx : g \in G\}$ be the orbit containing $x \in X$. The {\em Burnside process} is a Markov chain introduced by Jerrum \cite{Jer93} as a practical algorithm for sampling an orbit approximately uniformly at random.

Let $G_x = \{g \in G : gx = x\}$ be the {\em stabilizer} of $x \in X$ and let $X_g = \{ y \in X : gy = y\}$ be the {\em fixed set} of $g \in G$. The Burnside process is a reversible Markov chain on $X$ whose transition between states $x$ and $y$ is given by the following two substeps:
\begin{itemize}
\item From $x \in X$, choose $g \in G_x$ uniformly at random, then
\item From $g$, choose $y \in X_g$ uniformly at random.
\end{itemize}
The transition matrix is given by
\begin{equation} \label{BPTransEqn}
K(x,y) = \sum_{g \in G_x \cap G_y} \frac{1}{|G_x|} \frac{1}{|X_g|}
\end{equation}
with stationary distribution 
\begin{equation} \label{BPStationaryEqn}
\pi(x) = \frac{1}{z |O_x|}
\end{equation}
where $z$ is the number of orbits. Since the identity element contributes to every transition, \(K(x,y)\geq 1/(|G_x||X|)>0\) for all \(x,y\in X\), and thus the Burnside process is irreducible and aperiodic.

Let $\{\Oc_k\}_{k=1}^z$ be the disjoint orbits of $X$ under the group action $G$, so that $X = \Oc_1 \cup \Oc_2 \cup \dotsb \cup \Oc_z$. Let $\{X_t\}_{t=0}^\infty$ be the Burnside process on $X$. Then the {\em lumped Burnside process} is the Markov chain, $\{Y_t\}_{t=0}^\infty$, such that $Y_t = a$ if $X_t \in \Oc_a$. It follows that the ordinary Burnside process has a uniform stationary distribution when lumped into orbits.

In \cite{DZ21}, Diaconis and Zhong introduced a generalization of the original Burnside process called the {\em twisted Burnside process}, which gives a large family of Burnside processes with tunable stationary distributions. Again, $G$ is a finite group acting on a finite set $X$. Let $w$ be a positive weight on $G$ and let $W(x) = \sum_{g \in G_x} w(g)$. Let $v$ be a positive weight on $X$ and let $V(g) = \sum_{x \in X_g} v(x)$. The twisted Burnside process is a reversible Markov chain on $X$ whose transition between states $x$ and $y$ is given by the following two substeps:
\begin{itemize}
\item From $x \in X$, choose $g \in G_x$ with probability $\frac{w(g)}{W(x)}$, then
\item From $g$, choose $y \in X_g$ with probability $\frac{v(y)}{V(g)}$.
\end{itemize}
The transition matrix is given by
\begin{equation} \label{eq:general-twisted-Burnside-kernel}
K(x,y) = \frac{v(y)}{W(x)} \sum_{g \in G_x \cap G_y} \frac{w(g)}{V(g)}
\end{equation}
with stationary distribution 
\begin{equation} \label{eq:general-twisted-Burnside-pi}
\pi(x) = \frac{W(x)v(x)}{Z},
\end{equation}
where $Z = \sum_{x\in X}W(x)v(x)=\sum_{g\in G}w(g)V(g)$.
For every real-valued function $f$ on $X$,
\begin{align*}
\langle f,Kf\rangle_\pi
=\frac1Z\sum_{x,y\in X}v(x)v(y)f(x)f(y)
\sum_{g\in G_x\cap G_y}\frac{w(g)}{V(g)}
=\frac1Z\sum_{\substack{g\in G\\V(g)>0}}\frac{w(g)}{V(g)}
\left(\sum_{x\in X_g}v(x)f(x)\right)^2\ge0.
\end{align*}
Thus \(K\) is positive semidefinite on \(L^2(\pi)\), and hence has no negative eigenvalues. If $w$ is constant on each conjugacy class of $G$ and $v$ is constant on each orbit of $X$ (under the action of $G$), then the twisted Burnside process can be lumped onto the orbits of $X$. The lumped chain is a reversible Markov chain called the {\em lumped twisted Burnside process}, with stationary distribution $\bar{\pi}(O_x) \propto \frac{W(x)v(x)}{|G_x|}$; its kernel also has no negative eigenvalues. The special case where $w\equiv v\equiv1$ recovers the original, non-twisted, Burnside process. 

Jerrum showed that for certain choices of the group $G$, the Burnside process is {\em rapidly mixing} \cite{Jer93, Jer95}, meaning the mixing time is upper bounded by a polynomial in its input size. Subsequently, Goldberg and Jerrum showed that it is not rapidly mixing in general \cite{GJ02}. The computational properties and mixing time of the Burnside process have been studied in many special cases: Bose--Einstein configurations \cite{AF02, Dia05, DZ21, DLR25a+, DLR25b+}, conjugacy classes of centralizer Abelian groups \cite{Rah20}, Polya trees \cite{BD26}, integer partitions \cite{DH25}, contingency tables \cite{Dit19, DH25}, the flag variety of $\GL_n(\F_q)$ \cite{DM25+}, Sylow $p$-double cosets \cite{How25+}, set partitions \cite{Pag26}, conjugacy classes of unitriangular groups \cite{DZ26}, and matchings and coagulations of integer partitions \cite{Dal26}. In computer science, the Burnside process has been applied as a subroutine to an approximate lifted probabilistic inference algorithm called the {\em orbit-jump Markov chain Monte Carlo} \cite{HMV20}. More recently, the {\em dual Burnside process}, obtained by interchanging the roles of group elements and states, was systematically studied by the first author in \cite{Fen25}.

The mixing time analysis for the Burnside process, in full generality, remains an open problem. Among the available model-independent results, Chen~\cite[Propositions~10 and~11]{Che06} proved worst-case total-variation decay bounds of $(1-|G|^{-1})^t$ for the original Burnside process and $(1-|X|^{-1})^t$ for the lumped process. These bounds are usually not strong enough to prove rapid mixing, and so the Burnside process is typically studied on a case-by-case basis using various methods such as coupling, minorization, spectral theory, and geometric bounds.

\subsection{The Twisted Bose--Einstein Markov Chain and Main Results}\label{MainResults}

Let $n, k \ge 1$ be integers, and let $\theta > 0$ be a real number. Let $[k] = \{1,\ldots,k\}$. Let the symmetric group $S_n$ act on the set $[k]^n$ by permuting coordinates. For all $\sigma \in S_n$, let 
\bal
w(\sigma) = \theta^{\ell(\sigma)},
\eal
where $\ell(\sigma)$ is the number of cycles of $\sigma$. For all $x = (x_1, \ldots, x_n), y = (y_1, \ldots, y_n) \in [k]^n$, define
\bal
i_a(x) := |\{r \in [n] : x_r = a\}| \quad \text{and} \quad m_{a,b}(x,y) := |\{r \in [n] : x_r = a, \ y_r = b\}|.
\eal
Let $(x)_{(n)} = x(x+1)\dotsb (x+n-1)$ be the {\em rising factorial}, with the convention $(x)_{(0)} = 1$. 

The {\em twisted Bose--Einstein Markov chain on $[k]^n$} is a Markov chain on $[k]^n$ whose transition between states $x$ and $y$ is given by the following two substeps: 
\begin{itemize}
\item From $x \in [k]^n$, choose $\sigma \in G_x$ with probability $\frac{\theta^{\ell(\sigma)}}{W(x)}$, then
\item From $\sigma$, choose $y \in ([k]^n)_\sigma$ uniformly from the fixed set of $\sigma$.
\end{itemize}
Let $K$ and $\pi$ be the transition kernel and stationary distribution, respectively, of the twisted Bose-Einstein Markov chain. In Section~\ref{SectionTransitionKernels}, we show that 
\bal
 K(x,y) =\prod_{a=1}^k
 \frac{\displaystyle\prod_{b=1}^k
 (\theta/k)_{(m_{a,b}(x,y))}}
 {(\theta)_{(i_a(x))}}
\quad \text{and} \quad
 \pi(x)
 =\frac{\displaystyle\prod_{a=1}^k
 (\theta)_{(i_a(x))}}{(k\theta)_{(n)}}.
\eal

The same formulas identify this chain with Crane's symmetric Dirichlet cut-and-paste process \cite[Example~1.7, Equation~(1.8)]{Crane14}; see Remark \ref{rem:cut-and-paste}. The orbits of this chain are indexed by the set of {\em Bose-Einstein configurations}, denoted by $\Comp_{n,k}$, which are the possible configurations of dropping $n$ unlabeled balls into $k$ boxes. The twisted Bose--Einstein Markov chain, when lumped onto orbits, is called the {\em lumped twisted Bose--Einstein Markov chain on $\Comp_{n,k}$}.  In Section \ref{SectionLumpedBE}, we derive the explicit transition kernel and stationary distribution of this lumped chain. 

The special case where $\theta=1$ corresponds to the {\em Bose-Einstein Markov chain on $[k]^n$} \cite{Dia05}. This is a Markov chain whose transition between states $x,y \in [k]^n$ can be described explicitly as:
\begin{itemize}
\item From \(x\in[k]^n\), identify the set of coordinates \(I_j\) with common value \(j\) for all \(1\le j\le k\). Choose independent uniformly random permutations \(w_1,w_2,\ldots,w_k\) of these sets.
\item Break each \(w_i\) into cycles and label the coordinates of each cycle with an independent uniform choice in \([k]\). Let the final configuration be \(y\).
\end{itemize}
In this case, the lumped stationary distribution is uniform on the set of Bose--Einstein configurations. Thus the Bose--Einstein Markov chain gives a Markov chain Monte Carlo algorithm for sampling a Bose-Einstein configuration approximately uniformly at random. 

Let $P$ be a reversible Markov chain on $X$ with stationary distribution $\pi$. Define
\[
 d_P(t) = \max_{x \in X}\|P^t(x,\cdot)-\pi\|_{\TV} \quad \text{and} \quad 
 t_{\mix, P}(\epsilon):=\min\left\{t\ge0: d_P(t) \le\epsilon \right\}.
\]
These are the {\em distance function} and the {\em mixing time} of $P$. For the twisted Bose–Einstein kernel \(K\), write \(d(t):=d_K(t)\) and \(t_{\mix}(\epsilon):=t_{\mix,K}(\epsilon)\). Let $\lambda_2(P)$ be the second largest eigenvalue of $P$. Let $\Gamma(x)$ be the {\em gamma function} and let $\psi(z) = \frac{\Gamma'(z)}{\Gamma(z)}$ be the {\em digamma function}.

Our main result determines the second largest eigenvalue of the twisted Bose--Einstein Markov chain on $[k]^n$ and establishes upper and lower bounds on its mixing time. 

\begin{theorem}\label{BEbound}
Fix integers $k,n \ge 2$. Let $K$ be the transition kernel of the twisted Bose-Einstein Markov chain on $[k]^n$ and let $\pi$ be its stationary distribution. Then $\lambda_2 = \lambda_2(K) = \frac{k-1}{k(\theta + 1)}$, and for all $t \geq 1$, 
\bal
 \max\left\{
 \frac{\max\{\theta + 1,\theta(k-1)\}}{k\theta + 1}\lambda_2^t,\,
 1-\frac{(\psi(\theta+1)-\psi(1))t+\log(2 + \theta^{-1})}{\log(\min\{n,k\})}
 \right\} 
 \le d_K(t) 
 \le
 \frac n2\sqrt{2k}\,\lambda_2^{t/2}.
\eal
Consequently, for every fixed $\theta>0$ and $0<\epsilon<1$, if $k=k_n=n^{\beta+o(1)}$, where $\beta>0$ is a fixed constant and $o(1) \to 0$ as $n \to \infty$, then
\[
 t_{\mix,K}(\epsilon)
 =\Theta_{\theta,\beta,\epsilon}(\log n), 
\]
where $\Theta_{\theta,\beta,\epsilon}(\cdot)$ means that the constant depends on $\theta,\beta,\epsilon$. 
\end{theorem}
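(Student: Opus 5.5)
The plan rests on a Dirichlet representation of one step. By the formula for $K(x,y)$, the rising-factorial product for a fixed $a$ is the P\'olya-urn probability. So the step can be run as follows: for each occupied value $a$, draw independently $P_a\sim\Dirichlet(\theta/k,\dots,\theta/k)$ on $[k]$, and give each coordinate $r$ with $x_r=a$ an independent label $y_r\sim P_a$. Equivalently, one draws an Ewens$(\theta)$ permutation of each block $I_a$ and relabels its cycles uniformly.

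\textbf{The eigenvalue $\lambda_2$.} I would first check that $\lambda_2=\frac{k-1}{k(\theta+1)}$ is an eigenvalue, using pair-coincidence functions $h_{rs}(x)=\mathbf 1\{x_r=x_s\}-\frac1k$.
\begin{itemize}
\item If $x_r=x_s$, then $\Pr(y_r=y_s)=\sum_b\mathbb E P_a(b)^2=\frac{\theta/k+1}{\theta+1}$.
\item If $x_r\neq x_s$, the labels come from independent Dirichlets, so $\Pr(y_r=y_s)=\frac1k$.
\end{itemize}
Hence $Kh_{rs}=\bigl(\frac{\theta+k}{k(\theta+1)}-\frac1k\bigr)h_{rs}=\lambda_2 h_{rs}$. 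Single-coordinate functions $g(x_r)$ with $\pi$-mean zero are killed by $K$, since $\mathbb E P_a$ is uniform.

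To show nothing larger occurs, I would filter $L^2(\pi)$ by the subspaces $V_j$ of functions depending on the equality pattern of at most $j$ coordinates together with their values. I would show $KV_j\subseteq V_j$, with $K$ acting on $V_j/V_{j-1}$ triangularly. The diagonal entries should be the probabilities that a set partition of $j$ coordinates is preserved, namely products of Dirichlet moments. These are at most $\lambda_2$ for every $j\ge2$ and decrease in $j$. Positive semidefiniteness (shown earlier) then gives $0\le\lambda\le\lambda_2$ for all nontrivial eigenvalues.

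\textbf{Upper bound.} For the upper bound I would use the $\ell^2$ route. Write $\|K^t(x,\cdot)/\pi-1\|_{2,\pi}^2=\sum_{i\ge2}\lambda_i^{2t}\phi_i(x)^2$, and bound the total mass on each eigenspace at $x$ using the block structure above. The natural target is a chi-square bound of order $\binom n2\cdot 2k\cdot\lambda_2^{t}$, which through $\|\cdot\|_{\TV}\le\frac12\|\cdot\|_{2}$ would yield $\frac n2\sqrt{2k}\,\lambda_2^{t/2}$. An alternative I would keep in reserve is coupling two copies pair by pair. Controlling the contributions of higher $V_j$ without factorial blow-up is the step I expect to be the main obstacle. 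What I would try first is to dominate the full chi-square by the coalescence probability of all pairs, via a union bound over $\binom n2$ pairs.

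\textbf{Lower bounds.} The first lower bound comes from a test function. Take $x$ constant, $F=\sum_{r<s}h_{rs}$, and use $d(t)\ge|K^tF(x)-\pi F|/(\max F-\min F)$. Here $K^tF(x)-\pi F=\lambda_2^t(F(x)-\pi F)$, and computing $\pi F$ from the stationary formula should produce the constant $\frac{\max\{\theta+1,\theta(k-1)\}}{k\theta+1}$. The case split is over which extreme of $F$ dominates.

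The second lower bound tracks $N_t$, the number of distinct values of $X_t$, started from a constant state. In one step from $m$ occupied blocks, $N_{t+1}$ is at most the total number of Ewens cycles. By concavity of $\log$ and the Ewens cycle mean $\sum_{i<n_a}\theta/(\theta+i)$, I expect
\[
\mathbb E\log N_{t+1}\le\mathbb E\log N_t+\psi(\theta+1)-\psi(1),
\]
so $\mathbb E\log N_t\le(\psi(\theta+1)-\psi(1))t$. Under $\pi$, $N$ is at least of order $\min\{n,k\}$ with high probability; a second-moment estimate should give the $\log(2+\theta^{-1})$ slack. Applying Markov's inequality to the event $\{\log N\ge\log\min\{n,k\}\}$ then gives the stated bound.

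\textbf{The $\Theta(\log n)$ corollary.} With $k=n^{\beta+o(1)}$, the upper bound is below $\epsilon$ once $t\ge C\log n$, since $\lambda_2\le\frac1{1+\theta}<1$. The second lower bound stays above $\epsilon$ for $t\le c\log n$, because $\log\min\{n,k\}\asymp\log n$.
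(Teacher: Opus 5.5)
There are genuine gaps in three places: the $\ell^2$ upper-bound route cannot work, the eigenvalue upper bound is unproved, and the distinct-values statistic grows too fast. Your one-step Dirichlet description and the collision probabilities are right, but the upper bound cannot come from your $\ell^2$ route. The chi-square distance is genuinely enormous, so no control of the higher levels $V_j$ can rescue it. By reversibility, for a constant word $x_\star$,
\[
 \Bigl\|\tfrac{K^t(x_\star,\cdot)}{\pi}-1\Bigr\|_{2,\pi}^2
 =\frac{K^{2t}(x_\star,x_\star)}{\pi(x_\star)}-1
 \ge\frac{K(x_\star,x_\star)^{2t}}{\pi(x_\star)}-1.
\]
Here $K(x_\star,x_\star)=(\theta/k)_{(n)}/(\theta)_{(n)}\ge c_\theta k^{-1}n^{-\theta}$ is only polynomially small. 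On the other hand, $1/\pi(x_\star)=(k\theta)_{(n)}/(\theta)_{(n)}\ge(4/3)^{\min\{n,\lfloor k\theta\rfloor\}}$, since each factor $\frac{k\theta+i}{\theta+i}$ with $i\le k\theta$ is at least $\frac{2k}{k+1}$. For $k=n^{\beta+o(1)}$ the chi-square distance is therefore $\exp(n^{\Omega(1)})$ at every $t=O(\log n)$, so a bound of order $\binom n2\,2k\,\lambda_2^t$ is false. The missing idea is to work in total variation directly. Conditionally on $R_t=S_1\cdots S_t$, the law of $X_t$ is the product $\bigotimes_r e_{x_r}R_t$, so $\|K^t(x,\cdot)-K^t(y,\cdot)\|_{\TV}\le\sum_r\mathbb E\|(e_{x_r}-e_{y_r})R_t\|_{\TV}$. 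Then $\|w\|_1\le\sqrt k\|w\|_2$, Cauchy--Schwarz, and the exact contraction $\mathbb E\|vR_t\|_2^2=\rho^t\|v\|_2^2$ for zero-sum $v$ give $\frac n2\sqrt{2k}\,\rho^{t/2}$. The exponent $t/2$ is the square root of this mean-square contraction, not a chi-square decay.

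Your eigenvalue upper bound is also unsupported. The ``diagonal entries'' you name are not the level-$j$ eigenvalues. For $j=2$ the probability that a collision survives is $\frac{\theta+k}{k(\theta+1)}>\rho$; the eigenvalue is this minus $\frac1k$. For $j\ge3$ one gets alternating M\"obius-type combinations, e.g.\ $\frac{14}{27}-\frac{9}{27}-\frac{3}{27}=\frac{2}{27}$ for $j=3$, $\theta=1$, $k=3$, and you give no argument that they are at most $\lambda_2$. Your filtration can be completed, though. On each level-$j$ block the trace of $K^t$ equals $\mathbb EZ_t^j$ with $Z_t=\operatorname{tr}R_t-1$. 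The paper's facts $\mathbb EZ_t=0$, $|Z_t|\le k-1$, and $\mathbb EZ_t^2=\rho^t$ (from the rank-two kernel on $[k]^2$) bound these traces by $(k-1)^{j-2}\rho^t$. Together with positive semidefiniteness, this forces every nontrivial eigenvalue to be at most $\rho$. Also, $\mathbf 1\{x_r=x_s\}-\frac1k$ is not an eigenfunction, since $KH_{r,s}=\frac1k+\rho H_{r,s}$; the centering must be $c=\frac{\theta+1}{k\theta+1}$.

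The second lower bound fails too, because the number $N_t$ of distinct values does not gain $O(1)$ in logarithm per step. From a constant word, one step cuts the single block into the cycles of an Ewens$(\theta)$ permutation of $[n]$, about $\theta\log n$ of them. When $k$ grows polynomially these receive distinct labels with high probability, so $\mathbb E\log N_1=\log\log n+O(1)\to\infty$, contradicting your claimed $\mathbb E\log N_1\le\psi(\theta+1)-\psi(1)$. Your own ingredient $\sum_{i<n_a}\theta/(\theta+i)\approx\theta\log n_a$ is unbounded. The honest per-step increment is of order $\log\log n$ and would only give $t_{\mix}\gtrsim\log n/\log\log n$. What works is the empirical Shannon entropy $\widehat H$, which discounts the many small blocks. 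From $x_\star$, concavity gives $\mathbb E\widehat H(X_t)\le\mathbb EH(e_1R_t)$. The mixture inequality $H(qB)\le H(q)+\sum_aq_aH(B_{a,\bullet})$, with $\mathbb EH(Z)=\psi(\theta+1)-\psi(1+\theta/k)\le\psi(\theta+1)-\psi(1)$ for $Z\sim\Dirichlet(\theta/k,\ldots,\theta/k)$, bounds the growth by that constant per step. Meanwhile $\mathbb E_\pi\widehat H\ge\log\min\{n,k\}-\log(2+1/\theta)$ follows from exchangeability, Jensen, and $\pi(x_1=x_2)=c$, and $\widehat H/\log\min\{n,k\}$ serves as the test function.

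A smaller point on the first lower bound: a constant start only yields $(1-c)\lambda_2^t=\frac{\theta(k-1)}{k\theta+1}\lambda_2^t$. The branch $\frac{\theta+1}{k\theta+1}\lambda_2^t$, which is the larger one e.g.\ for $k=2$, requires the single-pair indicator $H_{r,s}$ started from a word with $x_r\ne x_s$. Your all-pairs statistic falls short of it whenever $k<n$.
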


The lumped twisted Bose--Einstein Markov chain has the same second-largest eigenvalue, upper bound, and entropy lower bound. In particular, it has the same logarithmic mixing order when $k=n^{\beta+o(1)}$.

Recall that the special case where $\theta = 1$ corresponds to the Bose--Einstein Markov chain. In \cite{AF02}, Aldous used a coupling argument to show that $d_K(t)\le n(1-1/k)^t$, which gives a mixing-time upper bound of $O(k \log n)$ uniformly in the starting state $x \in [k]^n$. Subsequently in \cite{Dia05}, Diaconis used minorization to show that when the chain is started from a constant state, say $(1,\ldots,1)$, the mixing time is upper bounded by $O(k!)$, so that for fixed $k$ the mixing time is independent of $n$. Finally, sharp mixing time bounds have been obtained for the special case where $k = 2$ \cite{DZ21, DLR25a+, DLR25b+}. 

Observe that Aldous' bound is better for $k$ large, while Diaconis' bound is better for fixed $k$ or $k$ growing very slowly with $n$. Diaconis conjectured~\cite[Section~4, remarks after Theorem~2]{Dia05} that the mixing time of the Bose--Einstein Markov chain is of order $\log n$ when $k$ is of order $n$. When $k = \Theta(n)$ and $\theta=1$, then Theorem~\ref{BEbound} implies that the mixing time of the Bose--Einstein Markov chain is $\Theta(\log n)$, thus resolving Diaconis' conjecture. Moreover, taking the trivial bound $d_K(t)\le1$ into account, our upper bound is uniformly no worse than Aldous' bound. Finally, Diaconis' bound applies only when the chain is started from a constant state and is strongest for fixed $k$, whereas our bound is uniform over the starting state.

By the correspondence between the twisted Bose--Einstein chain and the symmetric Dirichlet cut-and-paste process, for every fixed $k\ge2$ and $\theta>0$, Crane and Lalley's Example~5.8 and Theorem~5.7 \cite{CL12} show that this chain has cutoff at order $\log n$. In particular, for $k=2$ and $\theta=1$, a direct calculation gives the corresponding Lyapunov factor $1/4$, so, for every fixed $0<\epsilon<1$,
\[
 t_{\mix}(\epsilon)\sim\frac{\log n}{4\log2}=\frac14\log_2n,
 \qquad n\to\infty.
\]
Theorem~\ref{BEbound} complements their result with finite-$n$ bounds valid when $k$ grows with $n$, including matching logarithmic-order bounds when $k=n^{\beta+o(1)}$, a regime not covered by their fixed-dimensional theorem. 

\subsection{Application to Parking Functions and Dyck Paths}

Let $n \geq 1$ be an integer. A sequence of $n$ positive integers $x = (x_1, \ldots, x_n) \in [n]^n$ is a {\em parking function} of length $n$ if and only if $x_{(i)} \leq i$ for all $i \in [n]$, where $x_{(1)} \leq \dotsb \leq x_{(n)}$ is the {\em weakly increasing rearrangement} of $x$. Equivalently, $x$ is a parking function if and only if $|\{k\in[n]:x_k \leq i\}| \geq i$ for all $i \in [n]$. This implies that parking functions are invariant under permutations of the coordinates.

Let $\PF_n$ be the set of parking functions of length $n$, which has cardinality $|\PF_n| = (n+1)^{n-1}$. An elegant, unpublished proof of this result using a circular symmetry argument was given by Pollak and recounted in \cite{FR74}. Parking functions were introduced by Konheim and Weiss \cite{KW66} in their study of the hash storage structure and they continue to be a topic of active research, with wide applications to combinatorics, probability, and computer science. We refer the reader to the survey paper \cite{Yan15} and the references therein.

An {\em increasing parking function} of length $n$ is a parking function, $x \in \PF_n$, of length $n$ whose entries are in weakly increasing order, $x_1 \leq \dots \leq x_n$. Let $\IPF_n$ be the set of increasing parking functions of length $n$. It is known that $|\IPF_n| = C_n$, the $n$th {\em Catalan number}, given by $C_n = \frac{1}{n+1}\binom{2n}{n}$. The proof follows via a bijection between $\IPF_n$ and the set of {\em Dyck paths} of length $2n$, which is counted by $C_n$. Consequently, increasing parking functions are in bijection with a wide range of other {\em Catalan objects} \cite{Sta15}.

Let $S_n$ act on $\PF_n$ by permuting coordinates. That is, if $x = (x_1,\dots,x_n) \in \PF_n$ and $\sigma\in S_n$, then \(\sigma x=(x_{\sigma^{-1}(1)},\ldots,x_{\sigma^{-1}(n)})\). The {\em Burnside process on parking functions} (henceforth, the {\em Burnside process on $\PF_n$}) is a Markov chain whose transition between states $x,y \in \PF_n$ is described by the following two-step procedure:
\begin{itemize}
\item From $x \in \PF_n$, pick $\sigma \in G_x$ uniformly at random.
\item From $\sigma$, pick $y \in (\PF_n)_\sigma$ uniformly at random.
\end{itemize}

Let $x \in \PF_n$. Let $\bar{x}$ be its weakly increasing rearrangement and observe that $\bar{x} \in \IPF_n$. Since parking functions are invariant under permutations of their coordinates, two parking functions $x,y \in \PF_n$ lie in the same orbit if and only if $\bar{x} = \bar{y}$. Thus the orbits are indexed by increasing parking functions of length $n$. If $O_x$ is the orbit containing $x$, then we say that $\bar{x}$ is the increasing parking function {\em corresponding to} the orbit $O_x$. When lumped into orbits, the Burnside process on $\PF_n$ defines a Markov chain on the state space $\IPF_n$ with uniform stationary distribution. We refer to this lumped chain as the {\em lumped Burnside process on increasing parking functions} (henceforth, the {\em lumped Burnside process on $\IPF_n$}).
Thus the Burnside process on $\PF_n$ gives a novel Markov chain Monte Carlo algorithm for sampling an increasing parking function from $\IPF_n$ approximately uniformly at random. 

As an application of our Theorem \ref{BEbound}, we show that the Burnside process on $\PF_n$ has a mixing time of order $\log n$.  

\begin{theorem} \label{mainthm}
Fix $n \geq 2$. 
Let $K_{\PF}$ be the transition kernel of the Burnside process on $\PF_n$ and let $\pi_{\PF}$ be its stationary distribution. 
Then $\lambda_{2,\PF} = \lambda_2(K_{\PF}) = \frac{n}{2(n+1)}$, and for all $t\ge1$, 
\begin{equation}
 \max\left\{
  \frac{n}{n+2}\lambda_{2,\PF}^t,\,
  1-\frac{t+\log 3}{\log n}
 \right\}
 \le d_{K_{\PF}}(t)
 \le n\sqrt{\frac{n+1}{2}} \lambda_{2,\PF}^{t/2}.
 \label{eq:intro-PF-bounds}
\end{equation}
Consequently, for every fixed $0<\epsilon<1$,
\[
 t_{\mix,K_{\PF}}(\epsilon)=\Theta_\epsilon(\log n).
\]
\end{theorem}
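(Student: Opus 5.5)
The plan is to realize the Burnside process on $\PF_n$ as an exact lumping of the (non-twisted, $\theta=1$) Bose--Einstein chain on $[n+1]^n$, and then transfer Theorem~\ref{BEbound} with $k=n+1$. This is Pollak's circular argument. Let $\tau:[n+1]^n\to[n+1]^n$ add $1$ to every coordinate modulo $n+1$. Each $\langle\tau\rangle$-orbit has exactly $n+1$ elements and contains exactly one parking function. Moreover, $\tau$ commutes with the $S_n$-action. Let $\phi(x)$ denote the unique parking function in the $\tau$-orbit of $x$.

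\emph{Step 1: exact lumping.} The substitution $a\mapsto a+1$ permutes the indices of the $i_a$ and $m_{a,b}$. Hence the product formula for $K$ (Section~\ref{SectionTransitionKernels}) gives $K(\tau x,\tau y)=K(x,y)$ and $\pi(\tau x)=\pi(x)$. By Dynkin's criterion, $K$ therefore lumps under $\phi$.

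Next I identify the lumped kernel with $K_{\PF}$ directly from the two-step description. For $x\in\PF_n$, the stabilizer $G_x$ is the same in both chains. For any $\sigma$, the fixed set $([n+1]^n)_\sigma$ is $\tau$-stable, and $\tau$ acts freely on it. So this fixed set splits into $\tau$-orbits of size $n+1$, each meeting $(\PF_n)_\sigma$ exactly once. Consequently $\phi$ of a uniform point of $([n+1]^n)_\sigma$ is uniform on $(\PF_n)_\sigma$, and so $K_{\PF}(x,y)=K(x,\phi^{-1}(y))$.

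\emph{Step 2: the upper bound.} The projection $\phi$ does not increase total variation, and the pushforward of $\pi$ is $\pi_{\PF}$. Hence $d_{K_{\PF}}(t)\le d_K(t)$. With $k=n+1$ and $\theta=1$, we have $\lambda_2=\frac{n}{2(n+1)}$ and $\frac n2\sqrt{2k}=n\sqrt{(n+1)/2}$, which is the stated upper bound.

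The spectrum of the lumped chain is contained in that of $K$, so $\lambda_2(K_{\PF})\le \frac{n}{2(n+1)}$. For equality I need a $\tau$- and $S_n$-invariant eigenfunction. The candidate is
\[
f(x)=\sum_{r<s}\bigl(\mathbf 1\{x_r=x_s\}-c_0\bigr).
\]
If $x_r=x_s$, the two coordinates lie in a common cycle of a uniform permutation of their level set with probability $1/(\theta+1)$, independently of the level size. This yields $E[\mathbf 1\{y_r=y_s\}\mid x]=\lambda_2\mathbf 1\{x_r=x_s\}+1/k$. Choosing $c_0=\frac{1/k}{1-\lambda_2}$ then makes $Kf=\lambda_2 f$, and $f$ is clearly invariant under $\tau$ and under $S_n$.

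\emph{Step 3: the lower bounds, which are the main obstacle.} For the spectral bound I use the standard estimate $d(t)\ge \lambda_2^t\,|f(x_0)|/(2\|f\|_\infty)$, or an $L^2$ variant, with $x_0=(1,\dots,1)$. I expect that optimizing this reproduces the constant $\frac{\max\{2,n\}}{n+2}=\frac n{n+2}$ from Theorem~\ref{BEbound}, provided the eigenfunction used there can be taken shift-invariant. Otherwise I will recompute the constant from $f$ above.

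For the entropy bound, $\psi(2)-\psi(1)=1$, $\log(2+1)=\log3$ and $\log\min\{n,n+1\}=\log n$, so the target is exactly the entropy bound of Theorem~\ref{BEbound}. The delicate point is that this bound was proved for $K$ or its $S_n$-lumping, whereas $d_{K_{\PF}}\le d_K$ goes the wrong way for lower bounds. I would therefore rerun the entropy argument on the further quotient by $\tau$. The start $(1,\dots,1)$ is already a parking function, the per-step entropy increment $\psi(2)-\psi(1)$ is unchanged, and the target set still has stationary entropy of order $\log n$, since quotienting by $\tau$ costs at most $\log(n+1)$ on a space whose size is exponential in $n$.

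Finally, combining the two bounds gives $t_{\mix,K_{\PF}}(\epsilon)=\Theta_\epsilon(\log n)$, because $\lambda_{2,\PF}\le 1/2$.
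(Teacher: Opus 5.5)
Your Steps 1 and 2 are sound. The lumping under $\phi$, the fiber argument identifying the quotient kernel with $K_{\PF}$, the upper bound from $d_{K_{\PF}}\le d_K$, and the identification of $\lambda_{2,\PF}$ via the shift-invariant collision eigenfunction all work.

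The gap is in Step 3. You correctly note that lumping only gives $d_{K_{\PF}}\le d_K$. The missing idea is that the shift is not just lumpable: it is \emph{refreshed uniformly and independently at every step}. Indeed, $K(x,\tau^d y)$ does not depend on $d$, because shifting $y$ only permutes the column index of $m_{a,b}$. Your own fiber argument already shows this: a uniform point of $([n+1]^n)_\sigma$ is a uniform point of $(\PF_n)_\sigma$ together with an independent uniform shift. Hence, under $(u,c)\mapsto\tau^c u$, one has $K=K_{\PF}\otimes U_H$ and $\pi=\pi_{\PF}\otimes\Unif(H)$. So $K^t=K_{\PF}^t\otimes U_H$ for $t\ge1$, and $\|K^t(\tau^c u,\cdot)-\pi\|_{\TV}=\|K_{\PF}^t(u,\cdot)-\pi_{\PF}\|_{\TV}$. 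Thus $d_{K_{\PF}}(t)=d_K(t)$ exactly for $t\ge1$, and both lower bounds of Theorem~\ref{BEbound} (with $k=n+1$, $\theta=1$) transfer verbatim. This is the paper's route, via Theorem~\ref{thm:BE-PF-transfer}.

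Without this, your substitutes do not prove the inequality as stated.

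\emph{Spectral lower bound.} For your $f$, $c_0=2/(n+2)\le 1/2$, so $\|f\|_\infty=f(1,\dots,1)$. The estimate $\lambda_2^t|f(x_0)|/(2\|f\|_\infty)$ then yields only $\tfrac12\lambda_{2,\PF}^t$, which is strictly weaker than $\tfrac{n}{n+2}\lambda_{2,\PF}^t$ for $n\ge3$. You need the oscillation form $|\mu(g)-\nu(g)|\le(\max g-\min g)\|\mu-\nu\|_{\TV}$, or directly the $\tau$-invariant event $\{x_r=x_s\}$; either gives $(1-c_0)\lambda_{2,\PF}^t$.

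\emph{Entropy lower bound.} Your justification here is incorrect. The test function is the empirical entropy $\widehat H$ of the histogram, which lies in $[0,\log n]$ and has stationary mean at least $\log n-\log 3$. Losing anything of order $\log(n+1)$ would make the bound vacuous, and the size of the state space is irrelevant. What actually makes the argument go through is that $\widehat H$ is exactly invariant under global shifts, since a shift permutes the histogram entries. So $\widehat H$ descends to $\PF_n$, and lumpability gives identical dynamic and stationary expectations for $K_{\PF}$ started at $(1,\dots,1)\in\PF_n$.

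Either this invariance observation or the tensor factorization closes your outline.
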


Let $\Dc_n$ be the set of Dyck paths of length $2n$. It is well known that $|\Dc_n| = C_n$. Moreover, recall that there exists a bijection between $\IPF_n$ and $\Dc_n$, which we now describe. This exposition follows \cite{ALW16}. A Dyck path of length $2n$ is a lattice path in $\Z^2$ from $(0,0)$ to $(n,n)$ that weakly stays above the diagonal line $y = x$. It can also be represented by a sequence in $\{N, E\}^{2n}$, where $N$ represents an up-step, $E$ represents a right-step, and such that every initial subsequence has at least as many $N$'s as $E$'s. Given an increasing parking function $u = (u_1, \ldots, u_n) \in \IPF_n$, let $r_i$ be the number of times that $i$ occurs in $u$. Then $u$ is associated with the Dyck path 
\bal
\underbrace{N \dotsb N}_{r_1} E \underbrace{N \dotsb N}_{r_2} E \dotsb \underbrace{N \dotsb N}_{r_n} E.
\eal
See Figure~\ref{fig:dyck-bijections}\textup{(a)} for an example.

Generalizing this, we also describe a bijection between labeled Dyck paths and parking functions. A {\em labeled Dyck path} of length $2n$ is a Dyck path of length $2n$ where each up-step is labeled by a distinct integer $k \in [n]$ such that the labels of consecutive up-steps (which we refer to as a {\em vertical run}) are in strictly increasing order. Let $\LcDc_n$ be the set of labeled Dyck paths of length $2n$. It was shown in \cite{GH96} that $\LcDc_n$ is in bijection with the set $\PF_n$. Given a parking function $x = (x_1, \ldots, x_n) \in \PF_n$, first draw the Dyck path corresponding to its weakly increasing rearrangement $\bar{x} \in \IPF_n$. The $i$th vertical run has length $r_i$ (possibly $0$) corresponding to the number of occurrences of $i$ in $x$. If $r_i = k$ with $i = x_{j_1} = \dotsb = x_{j_k}$, then we label the $i$th vertical run by the increasing set of indices $j_1 < \ldots < j_k$. See Figure~\ref{fig:dyck-bijections}\textup{(b)} for an example.
\begin{figure}[htbp]
\centering
\begin{tikzpicture}[x=.72cm,y=.72cm, >=Latex]

\begin{scope}
  \DrawDyckGrid{5}
  \DrawExampleDyckPath

  \node[font=\small] at (2.5,5.75)
    {\textup{(a)} \(u=(1,1,3,4,4)\in\IPF_5\)};

  \node[align=center, font=\scriptsize] at (2.5,-.85)
    {\(\hist(u)=(2,0,1,2,0)\)\\[2pt]
     \(N^2E\,E\,NE\,N^2E\,E\)};
\end{scope}

\begin{scope}[shift={(8.2,0)}]
  \DrawDyckGrid{5}
  \DrawExampleDyckPath

  \DyckStepLabel{-.28}{.5}{\(2\)}
  \DyckStepLabel{-.28}{1.5}{\(5\)}
  \DyckStepLabel{1.75}{2.5}{\(3\)}
  \DyckStepLabel{3.28}{3.5}{\(1\)}
  \DyckStepLabel{3.28}{4.5}{\(4\)}

  \node[font=\small] at (2.5,5.75)
    {\textup{(b)} \(x=(4,1,3,4,1)\in\PF_5\)};

 \node[align=center, font=\scriptsize] at (2.5,-.85)
  {\(\bar x=(1,1,3,4,4)\)\\[2pt]
   value \(1\): \(\{2,5\}\), value \(3\): \(\{3\}\), value \(4\): \(\{1,4\}\)};
\end{scope}

\end{tikzpicture}
\caption{
\textup{(a)} The Dyck path (given by the sequence $N^2E\,E\,NE\,N^2E\,E$) corresponding to the increasing parking function \(u=(1,1,3,4,4)\).
\textup{(b)} The labeled Dyck path corresponding to the parking function
\(x=(4,1,3,4,1)\). 
}
\label{fig:dyck-bijections}
\end{figure}

Let $X = \LcDc_n$ and let $G = S_n$ act on $\LcDc_n$ as follows. Given \(\sigma\in S_n\), replace the label \(i\) by \(\sigma(i)\), and then rearrange the labels along each vertical run so that they are in increasing order. See Figure~\ref{fig:labeled-dyck-action}. It is clear that under this group action, $\LcDc_n$ breaks up into $C_n$ many orbits, indexed by $\Dc_n$. 

\begin{figure}[htbp]
\centering
\begin{tikzpicture}[x=.72cm,y=.72cm, >=Latex]

\begin{scope}
  \DrawDyckGrid{5}
  \DrawExampleDyckPath

  \DyckStepLabel{-.28}{.5}{\(2\)}
  \DyckStepLabel{-.28}{1.5}{\(5\)}
  \DyckStepLabel{1.75}{2.5}{\(3\)}
  \DyckStepLabel{3.28}{3.5}{\(1\)}
  \DyckStepLabel{3.28}{4.5}{\(4\)}

  \node[font=\small] at (2.5,5.75)
    {\(x=(4,1,3,4,1)\)};

  \node[align=center, font=\scriptsize] at (2.5,-.85)
  {value \(1\): \(\{2,5\}\), value \(3\): \(\{3\}\),\\
   value \(4\): \(\{1,4\}\)};
\end{scope}

\node[align=center, font=\small] at (6.8,3.1)
  {\(\sigma=(1\,5\,3)(2\,4)\)};

\draw[-{Latex[length=3mm]}, line width=.9pt] (5.6,2.35)--(8.1,2.35);

\begin{scope}[shift={(9.1,0)}]
  \DrawDyckGrid{5}
  \DrawExampleDyckPath

  \DyckStepLabel{-.28}{.5}{\(3\)}
  \DyckStepLabel{-.28}{1.5}{\(4\)}
  \DyckStepLabel{1.75}{2.5}{\(1\)}
  \DyckStepLabel{3.28}{3.5}{\(2\)}
  \DyckStepLabel{3.28}{4.5}{\(5\)}

  \node[font=\small] at (2.5,5.75)
    {\(\sigma\cdot x=(3,4,1,1,4)\)};

 \node[align=center, font=\scriptsize] at (2.5,-.85)
  {value \(1\): \(\{3,4\}\), value \(3\): \(\{1\}\),\\
   value \(4\): \(\{2,5\}\)};
\end{scope}

\end{tikzpicture}
\caption{
Example of the group action of \(S_5\) on labeled Dyck paths $\LcDc_5$.
Here \(\sigma=(1\,5\,3)(2\,4)\).
}
\label{fig:labeled-dyck-action}
\end{figure}

Let $G_x$ be the stabilizer of $x \in \LcDc_n$ and let $X_\sigma$ be the fixed set of $\sigma$ under this group action of $S_n$ on $\LcDc_n$. The {\em Burnside process on labeled Dyck paths} (henceforth, the {\em Burnside process on $\LcDc_n$}) is a Markov chain whose transition between states $x,y \in \LcDc_n$ is described by the following two-step procedure:
\begin{itemize}
\item From $x \in \LcDc_n$, pick $\sigma \in G_x$ uniformly at random. 
\item From $\sigma$, pick $y \in X_\sigma$ uniformly at random. 
\end{itemize}
When lumped into orbits, the Burnside process on $\LcDc_n$ defines a Markov chain on $\Dc_n$ with uniform stationary distribution. We call this lumped chain the {\em lumped Burnside process on Dyck paths} (henceforth, the {\em lumped Burnside process on $\Dc_n$}). 

Let $K_{\LcDc}$ denote the transition kernel of the Burnside process on $\LcDc_n$. The Burnside process on $\LcDc_n$ gives a novel Markov chain Monte Carlo algorithm for sampling a Dyck path from $\Dc_n$ approximately uniformly at random. We remark that the bijection \(F:\PF_n\to\LcDc_n\) between parking functions and labeled Dyck paths described above is \(S_n\)-equivariant; that is,
\[
F(\sigma x)=\sigma\cdot F(x)
\]
for all \(\sigma\in S_n\) and \(x\in\PF_n\). Thus the coordinate action on \(\PF_n\) corresponds exactly to the label-permutation action on \(\LcDc_n\). This equivariance preserves stabilizers and fixed sets: for every \(x\in\PF_n\) and \(\sigma\in S_n\),
\[
G_{F(x)}=G_x,
\qquad
F\bigl((\PF_n)_\sigma\bigr)=(\LcDc_n)_\sigma.
\]
Hence
\[
K_{\LcDc}(F(x),F(y))=K_{\PF}(x,y),
\]
so the Burnside processes on \(\PF_n\) and \(\LcDc_n\) are identical after identifying \(x\) with \(F(x)\). 
Therefore Theorem~\ref{mainthm} also holds when $K_{\PF}$ is replaced by $K_{\LcDc}$. 
To the best of our knowledge, among Markov chain Monte Carlo algorithms for sampling Dyck paths, this is the fastest proved mixing bound when measured in Markov-chain transitions.

\subsection{Outline}

The paper is organized as follows. In Section \ref{Sec:MCMCCatalan}, we survey previous work on Markov chains on Catalan structures and discuss how our Burnside processes on parking functions and Dyck paths fit within this framework. In Section \ref{SectionPreliminaries}, we collect the notation, definitions, and auxiliary results that will be used throughout the paper. In Section \ref{SectionTransitionKernels}, we begin with the Diaconis--Zhong twisted Burnside specialization, use the fixed-set and stabilizer calculations on $[k]^n$ to derive the labeled kernel $K$ and its stationary distribution $\pi$, identify the labeled chain with Crane's Dirichlet cut-and-paste chain, and then derive the general orbit-lumped kernel and its stationary distribution. In Section \ref{SectionComparison}, we recover the parking-function and increasing-parking-function kernels, after which Pollak's cyclic shifts identify the ordinary Bose--Einstein chain with the parking-function chain together with an independent uniform cyclic shift. Section~\ref{SectionMixingSpectral} uses the Dirichlet representation to identify the second-largest eigenvalue and prove the two-parameter upper and entropy lower bounds of the twisted Bose--Einstein Markov chain, then transfers these conclusions to parking functions and their orbit chains. The bijection between parking functions and labeled Dyck paths respects the $S_n$-actions and gives the labeled-Dyck conclusions stated above.

\subsection*{Acknowledgments}

We thank Jason Fulman for many helpful discussions and pointers to references.

\section{Markov Chains on Catalan Structures} \label{Sec:MCMCCatalan}

In this section, we survey some previous work on Markov chain Monte Carlo algorithms for approximately uniformly sampling Catalan structures and discuss how our Burnside processes on parking functions and Dyck paths fit within this context. 

Many classes of combinatorial objects are enumerated by the Catalan numbers $C_n$, including increasing parking functions of length $n$, Dyck paths of length $2n$, triangulations of a convex $(n+2)$-gon, noncrossing partitions of $[n]$, binary trees with $n$ vertices, and plane trees with $n+1$ vertices. These objects are commonly referred to as {\em Catalan structures}. 

After an $O(n^2)$-time preprocessing step, many standard Catalan structures can be sampled exactly and uniformly in $O(n)$ time per sample~\cite{AB18}. Moreover, due to the many bijections between various Catalan structures, a sampling algorithm for one Catalan structure allows for the sampling of others. On the other hand, analyzing the convergence rates of Markov chain Monte Carlo algorithms for approximately uniformly sampling a Catalan structure is a more interesting problem. In the sequel, we discuss two such Markov chains on Catalan structures; for many more examples, see Cohen's wonderful thesis \cite{Coh16}. The motivation for this line of study comes from a problem posed by Aldous \cite{Ald03}, who conjectured that the triangulation walk, a Markov chain on triangulations of a convex polygon, should have relaxation time of order $n^{3/2}$. This remains an open problem. Markov chains on Catalan structures with nonuniform distribution have also been studied \cite{MR00}. However, due to the lack of progress on many of the chains listed above, the mixing time analysis of Markov chains on Catalan structures is a challenging problem, even in the case of uniform stationary distribution.

\subsection{Triangulation Walk}

A {\em triangulation} of a convex $(n+2)$-gon is a set of $n-1$ non-crossing diagonals which partition the interior of the polygon into $n$ triangles. Let $T_n$ be the set of all triangulations of a convex $(n+2)$-gon, so that $|T_n|$ = $C_n$. Consider the graph $G_n$ with vertex set $V = T_n$, such that two triangulations are connected by an edge if one can be obtained from the other by a diagonal flip \cite{STT88}. Note that each diagonal lies in a unique quadrilateral. A {\em diagonal flip} replaces that diagonal with the other diagonal of the same quadrilateral. 

For $n\ge2$, the {\em triangulation walk} is the $1/2$-lazy random walk on $G_n$: at each step it stays put with probability $1/2$, and otherwise moves to a uniformly chosen neighboring triangulation. It has uniform stationary distribution and gives a Markov chain Monte Carlo algorithm for approximately uniformly sampling a triangulation.

Molloy, Reed, and Steiger \cite{MRS98} initiated the study of the mixing time of the triangulation walk and showed that it is rapidly mixing. In particular, they showed that the mixing time is lower bounded by $\Omega(n^{3/2})$ and upper bounded by $O(n^{25})$. Subsequently, McShine and Tetali \cite{MT98} obtained an improved mixing time upper bound of $O(n^5 \log n)$. More recently, Eppstein and Frishberg \cite{EF23} further improved this upper bound to $O(n^3 \log^3 n)$, the first improvement in 25 years. Even more recently, this was further improved to $\widetilde{O}(n^2)$ by Alev et al. \cite{AFST26}, where $\widetilde{O}(\cdot)$ denotes a bound that hides polylogarithmic factors in $n$.

\subsection{Dyck Random Transpositions Walk}

A {\em Catalan string} of order $n$ is a sequence $x \in \{\pm 1\}^{2n}$ of $n$ many $1$'s and $n$ many $-1$'s with nonnegative partial sums $\sum_{k=1}^j x_k \geq 0$ for all $j \in [2n]$. It is clear that the set of Catalan strings of order $n$ is in bijection with the set of Dyck paths of length $2n$. Thus Catalan strings can be visualized as a Dyck path. 

The {\em Dyck random transpositions walk} is a Markov chain on Catalan strings (equivalently, Dyck paths) defined as follows. Given a Catalan string $x$, pick two indices $i,j \in [2n]$ uniformly at random. Let $\tau_{i,j} = (i \, j) \in S_{2n}$ be the transposition which transposes $i$ and $j$. If $x' = \tau_{i,j}(x)$ yields a valid Catalan string, the chain moves to state $x'$. Otherwise, $x'$ is an invalid Catalan string and the chain remains at state $x$; we say that the move is {\em censored}. Restricting this chain to only adjacent transpositions yields the {\em Dyck adjacent transpositions walk}. One can also define similar walks from any symmetric set of generators of $S_{2n}$. 

On the other hand, removing the censoring step yields the {\em uncensored random transpositions walk on binary strings}, which are strings with an equal number of $1$'s and $-1$'s. This chain is equivalent to the Bernoulli-Laplace model, which is known to have a mixing time of $O(n\log n)$ \cite{DS87}. Restricting only to adjacent transpositions gives the {\em uncensored random adjacent transpositions walk on binary strings}, a chain studied by Wilson \cite{Wil04} who proved a $\Theta(n^3 \log n)$ mixing time. 

Cohen \cite{Coh16} notes that the censoring mechanism of the Dyck random transpositions walk favors moves which are more ``local,'' meaning that the more the Dyck path is changed by a single transposition, the more likely the resulting Dyck path is invalid and the move censored. Cohen, Tetali, and Yeliussizov \cite{CTY15} proved that the mixing time of the Dyck random transpositions walk is upper bounded by $O(n^2 \log n)$. Cohen \cite{Coh16} conjectured that the correct order should be $\Theta(n \log n)$, and this remains an open problem.   

\subsection{Burnside Process on Catalan Structures}

Consider a Burnside process whose number of orbits is a Catalan number. We refer to this chain as a {\em Burnside process on a Catalan structure}. By construction, a Burnside process on a Catalan structure gives a novel Markov chain Monte Carlo algorithm for approximately uniformly sampling a Catalan object. Two examples are the Burnside processes on $\PF_n$ and on $\LcDc_n$, which give algorithms for approximately uniformly sampling an increasing parking function and a Dyck path, respectively.

Recall that the Dyck random transpositions walk favors moves which are more ``local,'' similar to the random transpositions shuffle on $n$ cards \cite{DS81}. On the other hand, our Burnside process on $\LcDc_n$ is able to make large jumps within the state space in a single step, akin to the riffle shuffle on $n$ cards \cite{BD92}. The available upper bound improves from $O(n^2\log n)$ transitions for the Dyck random transpositions walk to $O(\log n)$ transitions for the Burnside process on labeled Dyck paths.

On the other hand, we have not found a natural triangulation-native Burnside process distinct from the one obtained by transporting our labeled-Dyck-path process through a standard bijection. This would give a novel Markov chain Monte Carlo algorithm for approximately uniformly sampling a triangulation. We leave this as an open problem. However, using the known bijections between Dyck paths and triangulations, we can apply our Burnside process on $\LcDc_n$ as a subroutine for an approximate triangulation sampler with an $O_\epsilon(\log n)$-step bound as follows:
\begin{itemize}
\item Starting at $x \in \LcDc_n$, run the Burnside process on $\LcDc_n$ for $O_\epsilon(\log n)$ Markov-chain steps. Let $y \in \LcDc_n$ be the resulting final state. Return the Dyck path, $v \in \Dc_n$, corresponding to the orbit of $y$; from an arbitrary labeled starting state, its law is within total-variation distance at most $\epsilon$ of uniform on $\Dc_n$.
\item From $v$, use any of the known bijections between $\Dc_n$ and $T_n$ to obtain the corresponding triangulation $\tau \in T_n$. For many bijections this takes $O(n)$ time.
\end{itemize}

Following Diaconis--Howes \cite{DH25}, we distinguish Markov-chain steps from per-step implementation cost: the $O_\epsilon(\log n)$ bound counts transitions of the Burnside chain, while each transition requires carrying out two substeps, sampling $\sigma\in G_x$ and then sampling $y\in X_\sigma$. Therefore this approximate sampling algorithm for triangulations uses $O_\epsilon(\log n)$ Burnside transitions, together with their implementation cost and the $O(n)$-time bijective conversion from the final Dyck path to a triangulation. At the level of Markov-chain transitions, this logarithmic guarantee contrasts with the current $\widetilde O(n^2)$ mixing-time upper bound for the triangulation walk; the per-step costs of the two algorithms are different and are not included in this comparison.

Finally, it would be interesting to construct and analyze the mixing time of the Burnside process on Catalan structures in full generality. The main challenge is in constructing the labeled versions of these Catalan structures, which is the starting point to constructing such Burnside processes. We plan to continue this line of study in future work.

\section{Preliminaries} \label{SectionPreliminaries}

\subsection{Notation and Definitions} \label{Sec:Notation}

For a permutation $\sigma$, let $\ell(\sigma)$ denote the total number of cycles of $\sigma$.

Let $z>0$ and let $n \geq 0$ be a nonnegative integer. Let $\Gamma(z)$ be the {\em gamma function}. The {\em rising factorial} is defined as $(z)_{(n)} = z(z+1)\dotsb (z+n-1)$. The {\em unsigned Stirling numbers of the first kind} count the number of permutations of size $n$ with $k$ cycles, given by $\stirling{n}{k} = |\{\sigma \in S_n : \ell(\sigma) = k\}|$. These quantities are related through the following identities. We use the empty-product convention $(z)_{(0)}=1$.
\begin{equation} \label{eq:stirl1}
(z)_{(n)} = \frac{\Gamma(z + n)}{\Gamma(z)} = \sum_{k=0}^n \stirling{n}{k} z^k = \sum_{\tau\in S_n} z^{\ell(\tau)}.
\end{equation}

Let $n,k\ge1$ and let $x = (x_1, \ldots, x_n), y = (y_1, \ldots, y_n) \in [k]^n$. We define the following quantities. We use $n$ for the word length throughout; the parking-function application takes $k=n+1$.
\begin{itemize}
\item For each $a \in [k]$, the {\em $a$-index set} of $x$ is the set of coordinates equal to $a$,
\bal
I_a(x) = \{r \in [n] : x_r = a\}.
\eal
Let $i_a(x) = |I_a(x)|$, and observe that $\sum_{a=1}^k i_a(x) = n$.
\item The {\em value partition} of $x$ is the set
\bal
\Ic(x) = \{I_a(x) : 1 \leq a \leq k, I_a(x) \neq \emptyset\}.
\eal
\item The {\em histogram} of $x$ is the vector
\bal
\hist(x) = (i_1(x), \ldots, i_k(x)).
\eal
\item The {\em weakly increasing rearrangement} of $x$ is the vector
\[
\bar x=
\bigl(
\underbrace{1,\dots,1}_{i_1(x)},
\underbrace{2,\dots,2}_{i_2(x)},
\dots,
\underbrace{k,\dots,k}_{i_k(x)}
\bigr) \in [k]^n.
\]
\item For all $a,b \in [k]$, the {\em $a,b$-intersection set} of $x$ and $y$ is the set
\bal
B_{a,b}(x,y) = I_a(x) \cap I_b(y) = \{r \in [n] : x_r = a, y_r = b\}.
\eal
Let $m_{a,b}(x,y) = |B_{a,b}(x,y)|$.
\item The {\em density} of $x$ and $y$ is the matrix given by
\bal
\dens(x,y) = (m_{a,b}(x,y))_{a,b=1}^k.
\eal
\item The set of {\em weak compositions of \(n\) into \(k\) parts} is
\bal
\Comp_{n,k}
=
\left\{
(c_1,\dots,c_k)\in\Z_{\ge0}^k:
\sum_{b=1}^k c_b=n
\right\}, 
\eal
with $|\Comp_{n,k}|=\binom{n+k-1}{k-1}$.
\item For $\bm{i},\bm{j}\in\Comp_{n,k}$, the space of {\em $(\bm{i},\bm{j})$-contingency tables} is
\bal
\mc(\bm{i},\bm{j})
=
\left\{
(m_{a,b})_{a,b=1}^k\in\Z_{\ge0}^{k\times k}:
\sum_{b=1}^k m_{a,b}=i_a,\ 
\sum_{a=1}^k m_{a,b}=j_b
\right\}.
\eal
\item Let $\mathcal P=\{B_1,\dots,B_m\}$ be a set partition of $[n]$. The {\em Young subgroup indexed by $\mathcal{P}$} is defined as
\[
S_{\mathcal P} = \{\sigma\in S_n:\ \sigma(B_j)=B_j\ \text{for all }j=1,\dots,m\}.
\]
Thus $\sigma\in S_{\mathcal P}$ if and only if $\sigma$ permutes each block $B_j$ amongst itself.
\item Let $\mathcal P=\{P_i\}_{i\in I}$ and $\mathcal Q=\{Q_j\}_{j\in J}$ be set partitions of $[n]$. The {\em meet} of $\mathcal{P}$ and $\mathcal{Q}$ is
\bal
\mathcal P\wedge\mathcal Q =\{\,P_i\cap Q_j:\ i\in I,\ j\in J,\ P_i\cap Q_j\neq\varnothing\}.
\eal
\end{itemize}
We adopt the convention that $S_0 = \{e\}$ and $\ell(e) = 0$.

For a row vector $v=(v_1,\ldots,v_k) \in \R^k$, let
\[
 \|v\|_1:=\sum_{a=1}^k|v_a|,
 \qquad
 \|v\|_2^2:=\sum_{a=1}^k v_a^2.
\]
Let $e_a$ denote the $a$th standard row vector in $\mathbb R^k$. 

\subsection{Markov Chains and Mixing Times}

Let $X$ be a finite set. A Markov chain is specified by a {\em transition matrix}, or {\em transition kernel}, $K(x,y) \geq 0$ with $\sum_y K(x,y) = 1$, so that $K(x,y)$ is the probability of moving from $x$ to $y$ in one step. Under mild conditions, there exists a unique {\em stationary distribution} $\pi(x) \geq 0$ with $\sum_x \pi(x) = 1$, such that $\sum_x \pi(x) K(x,y) = \pi(y)$.

Let $K^t(x,y)$ be the probability of moving from $x$ to $y$ in $t$ steps, where $t$ is a nonnegative integer. The {\em fundamental theorem of Markov chains} states that if $K$ is irreducible and aperiodic, then $K_x^t(y) := K^t(x,y) \to \pi(y)$ as $t \to \infty$. The distance to stationarity can be measured by the {\em total variation distance}
\bal
\|K_x^t - \pi\|_{\TV} = \max_{A \subseteq X} |K^t(x,A) - \pi(A)| = \frac{1}{2} \sum_y |K^t(x,y) - \pi(y)|. 
\eal

Unless otherwise stated, all mixing-time asymptotics are for fixed $0<\epsilon<1$, and the eigenvalues of a finite reversible transition kernel $K$ are listed in decreasing order and counted with algebraic multiplicity, so $1=\lambda_1(K)\ge\lambda_2(K)\ge\cdots$. Its relaxation time is $t_{\rel}(K):=(1-\lambda_2(K))^{-1}$.

A partition $\mathcal P$ of $X$ is {\em strongly lumpable} for $K$ if, for every $B,C\in\mathcal P$, the sum $\sum_{y\in C}K(x,y)$ is independent of the choice of $x\in B$.

\subsection{Auxiliary Results}

In this section, we collect some technical lemmas and auxiliary results that we use in the paper.

\begin{proposition}\label{prop:young-product}
Let $\mathcal P=\{B_1,\dots,B_m\}$ be a set partition of $[n]$. Then
\[
S_{\mathcal P}\cong \prod_{j=1}^m S_{B_j},
\qquad
\sigma\longmapsto (\sigma|_{B_1},\dots,\sigma|_{B_m}),
\]
and hence $|S_{\mathcal P}|=\prod_{j=1}^m |B_j|!$.
\end{proposition}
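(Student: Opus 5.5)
The plan is to write down the restriction map explicitly and check, in order, that it is well defined, a homomorphism, injective, and surjective. The cardinality formula then follows from the standard count $|S_B|=|B|!$.

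First, let $\sigma\in S_{\mathcal P}$. By definition $\sigma(B_j)=B_j$ for each $j$, so the restriction $\sigma|_{B_j}$ is a map $B_j\to B_j$. It is injective because $\sigma$ is, and a self-injection of a finite set is a bijection. Hence $\sigma|_{B_j}\in S_{B_j}$ and the map $\Phi:\sigma\mapsto(\sigma|_{B_1},\dots,\sigma|_{B_m})$ is well defined. For $\sigma,\tau\in S_{\mathcal P}$ we have $\tau(B_j)=B_j$, so
\[
(\sigma\tau)|_{B_j}=\sigma|_{B_j}\circ\tau|_{B_j}
\]
on each block, which shows $\Phi$ is a homomorphism. (Composition is preserved, and $S_{\mathcal P}$ is closed under products and inverses, so it is indeed a subgroup.)

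Next, injectivity. If $\Phi(\sigma)$ is the identity tuple, then $\sigma$ fixes every point of every block. Since the blocks cover $[n]$, this forces $\sigma=e$.

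For surjectivity, take any $(\rho_1,\dots,\rho_m)\in\prod_j S_{B_j}$ and define $\sigma(r)=\rho_j(r)$ for $r\in B_j$. This is well defined because the blocks are disjoint and cover $[n]$. It is a bijection of $[n]$ because it maps each block bijectively onto itself, so $\sigma\in S_{\mathcal P}$, and by construction $\Phi(\sigma)=(\rho_1,\dots,\rho_m)$.

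Therefore $\Phi$ is an isomorphism, and $|S_{\mathcal P}|=\prod_j|S_{B_j}|=\prod_j|B_j|!$. No step is a real obstacle. The only point needing care is the gluing in the surjectivity step, which relies on $\mathcal P$ being a genuine partition: its blocks are disjoint and cover $[n]$.
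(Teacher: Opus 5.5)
Your proposal is correct and follows essentially the same route as the paper: use the restriction map, glue block permutations to get surjectivity, and check that restriction respects composition. The only cosmetic difference is that you get injectivity from the trivial kernel, whereas the paper writes the gluing map explicitly as a two-sided inverse.
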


\begin{proof}
Define
\[
\varphi:S_{\mathcal P}\to \prod_{j=1}^m S_{B_j},
\qquad
\varphi(\sigma):=(\sigma|_{B_1},\dots,\sigma|_{B_m}).
\]
This is well-defined since $\sigma(B_j)=B_j$ implies $\sigma|_{B_j}\in S_{B_j}$. Define
\[
\psi:\prod_{j=1}^m S_{B_j}\to S_{\mathcal P}
\]
by
\[
\psi(\pi_1,\dots,\pi_m)(r):=\pi_j(r)
\quad\text{for the unique }j\text{ with }r\in B_j.
\]
Then $\psi$ is well-defined because $\{B_1,\dots,B_m\}$ is a partition of $[n]$, and $\psi(\pi_1,\dots,\pi_m)(B_j)=B_j$ for every $j$, so $\psi(\pi_1,\dots,\pi_m)\in S_{\mathcal P}$. Also
\[
\varphi(\psi(\pi_1,\dots,\pi_m))=(\pi_1,\dots,\pi_m),
\qquad
\psi(\varphi(\sigma))=\sigma.
\]
Thus $\varphi$ is a bijection. For $\sigma,\tau\in S_{\mathcal P}$ and each $j$,
\[
(\sigma\tau)|_{B_j}=\sigma|_{B_j}\,\tau|_{B_j},
\]
so $\varphi(\sigma\tau)=\varphi(\sigma)\varphi(\tau)$; hence $\varphi$ is an isomorphism. Therefore
\bal
|S_{\mathcal P}| &= \prod_{j=1}^m |S_{B_j}|=\prod_{j=1}^m |B_j|!. \qedhere
\eal
\end{proof}

\begin{lemma}\label{lemma:young-meet}
Let $\mathcal P,\mathcal Q$ be set partitions of $[n]$. Then
\[
S_{\mathcal P}\cap S_{\mathcal Q}=S_{\mathcal P\wedge\mathcal Q}.
\]
\end{lemma}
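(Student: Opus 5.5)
The plan is a direct double inclusion, using only the definitions of $S_{\mathcal P}$ and of the meet $\mathcal P\wedge\mathcal Q$. First I will note that $\mathcal P\wedge\mathcal Q$ is indeed a set partition of $[n]$. Its blocks $P_i\cap Q_j$ are nonempty by definition. They are pairwise disjoint, since both $\{P_i\}$ and $\{Q_j\}$ are partitions. They cover $[n]$, because each $r\in[n]$ lies in a unique $P_i$ and a unique $Q_j$, hence in $P_i\cap Q_j$. So $S_{\mathcal P\wedge\mathcal Q}$ is well defined.

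For the inclusion $S_{\mathcal P}\cap S_{\mathcal Q}\subseteq S_{\mathcal P\wedge\mathcal Q}$, take $\sigma$ with $\sigma(P_i)=P_i$ and $\sigma(Q_j)=Q_j$ for all $i,j$. Since $\sigma$ is a bijection, images of intersections are intersections of images. Hence
\[
\sigma(P_i\cap Q_j)=\sigma(P_i)\cap\sigma(Q_j)=P_i\cap Q_j,
\]
so $\sigma$ fixes every block of the meet setwise.

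For the reverse inclusion, take $\sigma\in S_{\mathcal P\wedge\mathcal Q}$. Each $P_i$ is the disjoint union of the nonempty blocks $P_i\cap Q_j$ with $j\in J$, because the $Q_j$ cover $[n]$. Therefore
\[
\sigma(P_i)=\bigcup_j \sigma(P_i\cap Q_j)=\bigcup_j (P_i\cap Q_j)=P_i,
\]
where the empty intersections contribute nothing. The same argument with the roles of $\mathcal P$ and $\mathcal Q$ swapped gives $\sigma(Q_j)=Q_j$, so $\sigma\in S_{\mathcal P}\cap S_{\mathcal Q}$.

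There is no real obstacle here. The only point needing care is the bookkeeping of the empty intersections $P_i\cap Q_j=\varnothing$, which are excluded from the meet. In the reverse inclusion they contribute the empty set to the union, so the decomposition of $P_i$ into meet blocks is still valid.
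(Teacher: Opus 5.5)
Your proof is correct and follows the paper's argument essentially verbatim: the forward inclusion uses $\sigma(P_i\cap Q_j)=\sigma(P_i)\cap\sigma(Q_j)$ by bijectivity, and the reverse inclusion decomposes each $P_i$ into the nonempty meet blocks $P_i\cap Q_j$ and then appeals to symmetry for $\mathcal Q$. Your preliminary check that $\mathcal P\wedge\mathcal Q$ is a genuine set partition of $[n]$ is a small addition that the paper leaves implicit.
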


\begin{proof}
If $\sigma\in S_{\mathcal P}\cap S_{\mathcal Q}$ and $D=B\cap C\neq\varnothing$ with $B\in\mathcal P$, $C\in\mathcal Q$, then
\[
\sigma(D)=\sigma(B\cap C)=\sigma(B)\cap\sigma(C)=B\cap C=D,
\]
so $\sigma\in S_{\mathcal P\wedge\mathcal Q}$.

Conversely, if $\sigma\in S_{\mathcal P\wedge\mathcal Q}$ and $B\in\mathcal P$, then
\[
B=\bigsqcup_{\substack{C\in\mathcal Q\\ B\cap C\neq\varnothing}} (B\cap C).
\]
Since each nonempty $B\cap C$ is a block of $\mathcal P\wedge\mathcal Q$,
\[
\sigma(B)=\bigsqcup_{\substack{C\in\mathcal Q\\ B\cap C\neq\varnothing}} \sigma(B\cap C)
=\bigsqcup_{\substack{C\in\mathcal Q\\ B\cap C\neq\varnothing}} (B\cap C)=B.
\]
Thus $\sigma\in S_{\mathcal P}$; by symmetry, $\sigma\in S_{\mathcal Q}$. Hence $\sigma\in S_{\mathcal P}\cap S_{\mathcal Q}$.
\end{proof}

For each $1\le i\le n$, let $E_i$ be a finite set and let $\mu_i$ and $\nu_i$ be probability measures on $E_i$. Let $\bigotimes_{i=1}^n\mu_i$ denote the product measure of $\mu_1, \ldots, \mu_n$. If $\mu,\nu$ are probability measures on a finite set $E$ and $\eta$ is a probability measure on a finite set $F$, then
\begin{equation}\label{eq:TV-common-factor}
 \|\mu\otimes\eta-\nu\otimes\eta\|_{\TV}
 =
 \frac12\sum_{x\in E}\sum_{z\in F}
 |\mu(x)-\nu(x)|\eta(z)
 =
 \|\mu-\nu\|_{\TV}.
\end{equation}
Set
\[
 P_j:=
 \left(\bigotimes_{r=1}^{j}\nu_r\right)
 \otimes
 \left(\bigotimes_{r=j+1}^{n}\mu_r\right),
 \qquad 0\le j\le n.
\]
Then \eqref{eq:TV-common-factor} and the triangle inequality give
\begin{equation}\label{eq:TV-product}
 \left\|
 \bigotimes_{r=1}^n\mu_r-
 \bigotimes_{r=1}^n\nu_r
 \right\|_{\TV}
 \le
 \sum_{j=1}^n\|P_{j-1}-P_j\|_{\TV}
 =
\sum_{j=1}^n\|\mu_j-\nu_j\|_{\TV}.
\end{equation}

Let $\gamma_1,\ldots,\gamma_k>0$ and let $\gamma_0:=\sum_{b=1}^k\gamma_b$. If 
\[
 (Z_1,\ldots,Z_k)\sim\Dirichlet(\gamma_1,\ldots,\gamma_k),
\]
then for nonnegative integers $r_1,\ldots,r_k$ with $r_0 = \sum_b r_b$,
\begin{equation}\label{eq:Dirichlet-moments}
 \mathbb E \left[\prod_{b=1}^k Z_b^{r_b} \right]
 =\frac{\prod_{b=1}^k(\gamma_b)_{(r_b)}}{(\gamma_0)_{(r_0)}}.
\end{equation}

\section{The Twisted Bose--Einstein Markov Chain}\label{SectionTransitionKernels}

This section derives the labeled and lumped twisted Bose--Einstein kernels and their stationary laws and identifies the labeled chain with Crane's Dirichlet cut-and-paste chain. Setting $\theta=1$ recovers the ordinary Bose--Einstein chain.

\subsection{The Labeled Twisted Bose--Einstein Markov Chain}

Let $K$ be the transition kernel of the twisted Bose--Einstein Markov chain. 
The next lemma supplies the fixed-set, stabilizer, and orbit counts needed to evaluate $K$ and $\pi$. Its parking-function assertions will also be used in the application developed below. 

\begin{lemma} \label{LemmaKeyFacts}
Let $n,k\ge1$ and $x,y \in [k]^n$.
\begin{enumerate}[label=(\roman*)]
\item For all $\sigma\in S_n$,
\bal
|([k]^n)_\sigma|=k^{\ell(\sigma)}.
\eal
When $k=n+1$, the restriction to parking functions satisfies
\bal
|(\PF_n)_\sigma|=(n+1)^{\ell(\sigma)-1}.
\eal
\item We have
\bal
G_x \cong S_{i_1(x)}\times\cdots\times S_{i_k(x)} \quad \text{and} \quad |G_x|=\prod_{a=1}^{k} i_a(x)!.
\eal
\item We have that
\bal
|[k]^n/S_n|=|\Comp_{n,k}|=\binom{n+k-1}{k-1}.
\eal
When $k=n+1$, the parking-function restriction satisfies
\bal
|\PF_n/S_n|=|\IPF_n|=C_n.
\eal
\item We have
\bal
G_x\cap G_y\cong\prod_{a,b=1}^k S_{m_{a,b}(x,y)} \quad \text{and} \quad
|G_x\cap G_y|=\prod_{a,b=1}^k m_{a,b}(x,y)!
\eal
\item For \(g\in G_x\cap G_y\), each nonempty \(B_{a,b}(x,y)\) is \(g\)-invariant, so
\[
g|_{B_{a,b}(x,y)}:B_{a,b}(x,y)\to B_{a,b}(x,y)
\]
is a permutation, and
\[
\ell(g)
=
\sum_{\substack{1\le a,b\le k\\ B_{a,b}(x,y)\neq\emptyset}}
\ell\!\left(g|_{B_{a,b}(x,y)}\right).
\]
\end{enumerate}
\end{lemma}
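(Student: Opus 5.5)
The plan is to prove the five parts in order; each reduces to describing the relevant fixed sets and stabilizers through the cycle structure of $\sigma$ and through Young subgroups, using Proposition~\ref{prop:young-product} and Lemma~\ref{lemma:young-meet}.

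\emph{Part (i).} A word $y\in[k]^n$ is fixed by $\sigma$ exactly when $y_{\sigma^{-1}(r)}=y_r$ for all $r$. Equivalently, $y$ is constant on each cycle of $\sigma$. Choosing a value in $[k]$ independently for each of the $\ell(\sigma)$ cycles gives $|([k]^n)_\sigma|=k^{\ell(\sigma)}$.

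For parking functions with $k=n+1$, I will use Pollak's circular argument. Let $\Z_{n+1}$ act on $[n+1]^n$ by adding $1$ modulo $n+1$ to every coordinate, with values taken in $\{1,\dots,n+1\}$. Pollak's argument shows that each $\Z_{n+1}$-orbit has size $n+1$ and contains exactly one parking function. This shift commutes with the coordinate action of $S_n$, so it maps $([n+1]^n)_\sigma$ into itself. It acts freely there, since the diagonal shift of any word has no fixed points. Each orbit inside the fixed set is a full Pollak orbit, hence meets $\PF_n$ exactly once. Therefore $|(\PF_n)_\sigma|=(n+1)^{\ell(\sigma)}/(n+1)=(n+1)^{\ell(\sigma)-1}$.

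The one point needing care is that every word in the Pollak orbit of a $\sigma$-fixed word is again $\sigma$-fixed. This holds because the shift is applied coordinatewise by the same map, so it commutes with permuting coordinates. I expect this parking-function count to be the only step with real content.

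\emph{Parts (ii) and (iii).} The permutation $\sigma$ fixes $x$ if and only if $\sigma$ preserves each level set $I_a(x)$. So $G_x=S_{\Ic(x)}$, and Proposition~\ref{prop:young-product} gives the product decomposition and the order $\prod_a i_a(x)!$; empty level sets contribute the factor $0!=1$.

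Two words lie in the same $S_n$-orbit exactly when they have the same histogram. Every weak composition occurs as a histogram, so the orbits are in bijection with $\Comp_{n,k}$, and a stars-and-bars count gives $\binom{n+k-1}{k-1}$. For parking functions, $\PF_n$ is $S_n$-invariant and each orbit contains a unique weakly increasing representative $\bar x$. Hence the orbits correspond to $\IPF_n$, which has $C_n$ elements, as recalled in the introduction.

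\emph{Parts (iv) and (v).} By Lemma~\ref{lemma:young-meet}, $G_x\cap G_y=S_{\Ic(x)}\cap S_{\Ic(y)}=S_{\Ic(x)\wedge\Ic(y)}$. The blocks of the meet are exactly the nonempty sets $B_{a,b}(x,y)$. Proposition~\ref{prop:young-product} then gives the isomorphism with $\prod_{a,b} S_{m_{a,b}(x,y)}$ and the stated order.

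For (v), any $g$ in this intersection maps each nonempty $B_{a,b}(x,y)$ onto itself. So every cycle of $g$ lies inside a single block, and the cycles of $g$ are the disjoint union of the cycles of the restrictions $g|_{B_{a,b}(x,y)}$. This gives the stated additivity of $\ell(g)$.
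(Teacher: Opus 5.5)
Your proposal is correct. For the word count in (i) and for parts (ii)--(v) it follows the paper's proof essentially step for step: fixed words are constant on cycles, level sets give $G_x=S_{\Ic(x)}$, and then Proposition~\ref{prop:young-product} and Lemma~\ref{lemma:young-meet} do the work.

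The one genuine difference is the parking-function fixed-point count $|(\PF_n)_\sigma|=(n+1)^{\ell(\sigma)-1}$. The paper does not prove this; it simply cites Stanley's permutation-character value \cite[Proposition~2.2(a)]{Sta97}. You instead derive it from Pollak's cyclic-shift fact, and your argument is sound:
\begin{itemize}
\item the diagonal $\Z_{n+1}$-action commutes with coordinate permutations, $\sigma(c\cdot x)=c\cdot(\sigma x)$, so it maps $([n+1]^n)_\sigma$ into itself;
\item it acts freely, since $x_1+c\neq x_1$ for $c\neq0$;
\item each of its orbits meets $\PF_n$ exactly once.
\end{itemize}
Dividing $(n+1)^{\ell(\sigma)}$ by $n+1$ then gives the count.

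Your route makes the lemma self-contained at the cost of one short paragraph; it is essentially the argument underlying Stanley's formula. It also uses exactly the two facts (commutation of the shift with $S_n$, and freeness) that the paper only brings in later, in Lemma~\ref{lem:BE-shift} and Theorem~\ref{thm:BE-PF-transfer}. So your proof anticipates the tensor factorization $K_{\BE}=K_{\PF}\otimes U_H$. The paper's citation keeps the lemma shorter but leaves this step as a black box.
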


\begin{proof}
(i) A word in $[k]^n$ fixed by $\sigma$ is constant on each cycle of $\sigma$, and each cycle may be assigned any of the $k$ labels. Hence
\[
 |([k]^n)_\sigma|=k^{\ell(\sigma)}.
\]
The parking-function assertion is the parking-function permutation character value; see \cite[Proposition~2.2(a), Equation~(2)]{Sta97}.

(ii) For \(\sigma\in S_n\),
\[
\sigma\in G_x
\iff
\sigma x=x
\iff
x_{\sigma^{-1}(r)}=x_r \quad (r\in[n]).
\]
Thus \(\sigma\in G_x\) iff \(\sigma\) preserves each level set \(I_a(x)\). Hence
\[
G_x=\{\sigma\in S_n:\sigma(B)=B\ \text{for every }B\in\mathcal I(x)\}
=S_{\mathcal I(x)}.
\]
By Proposition~\ref{prop:young-product},
\[
G_x\cong\prod_{a=1}^k S_{I_a(x)}
\cong S_{i_1(x)}\times\cdots\times S_{i_k(x)},
\]
and therefore
\[
|G_x|=\prod_{a=1}^k i_a(x)!.
\]

(iii) If \(y=\sigma x\), then \(\sigma\) only reindexes coordinates, so
\[
\hist(y)=\hist(x).
\]
Conversely, suppose \(\hist(x)=\hist(y)\). Then
\[
|I_a(x)|=|I_a(y)| \qquad (1\le a\le k).
\]
Choose bijections \(\phi_a:I_a(x)\to I_a(y)\), and define \(\sigma\in S_n\) by
\[
\sigma|_{I_a(x)}=\phi_a.
\]
Then, for \(r\in I_a(y)\), we have \(\sigma^{-1}(r)\in I_a(x)\), so
\[
(\sigma x)_r=x_{\sigma^{-1}(r)}=a=y_r.
\]
Hence \(\sigma x=y\). Thus orbits are indexed by histograms. For each orbit, the weakly increasing rearrangement
\[
\bar x=
1^{i_1(x)}2^{i_2(x)}\cdots k^{i_k(x)}
\]
is its unique weakly increasing representative in $[k]^n$. Hence the orbits are indexed by $\Comp_{n,k}$. Therefore
\[
|[k]^n/S_n|=|\Comp_{n,k}|=\binom{n+k-1}{k-1}.
\]
For the parking-function restriction, the weakly increasing representative lies in $\IPF_n$, so the same argument shows that the orbits are indexed by $\IPF_n$. Therefore
\[
|\PF_n/S_n|=|\IPF_n|=C_n.
\]

(iv) By (ii), applied to \(x\) and \(y\),
\[
G_x=S_{\mathcal I(x)},
\qquad
G_y=S_{\mathcal I(y)}.
\]
By Lemma \ref{lemma:young-meet},
\[
G_x\cap G_y
=
S_{\mathcal I(x)}\cap S_{\mathcal I(y)}
=
S_{\mathcal I(x)\wedge\mathcal I(y)}.
\]
Since
\[
\mathcal I(x)\wedge\mathcal I(y)
=
\{B_{a,b}(x,y):B_{a,b}(x,y)\neq\emptyset\},
\]
Proposition~\ref{prop:young-product} gives
\[
G_x\cap G_y
\cong
\prod_{\substack{1\le a,b\le k\\B_{a,b}(x,y)\neq\emptyset}}
S_{B_{a,b}(x,y)}
\cong
\prod_{a,b=1}^k S_{m_{a,b}(x,y)}.
\]
Thus
\[
|G_x\cap G_y|
=
\prod_{a,b=1}^k m_{a,b}(x,y)!.
\]

(v) Let \(g\in G_x\cap G_y\), and let \(B_{a,b}(x,y)\neq\emptyset\). By (ii), applied to \(x\) and \(y\),
\[
g(I_a(x))=I_a(x),
\qquad
g(I_b(y))=I_b(y).
\]
Hence
\[
g(B_{a,b}(x,y))
=
g(I_a(x)\cap I_b(y))
=
g(I_a(x))\cap g(I_b(y))
=
I_a(x)\cap I_b(y)
=
B_{a,b}(x,y),
\]
where the second equality uses that \(g\) is bijective. Thus \(g|_{B_{a,b}(x,y)}\) is a permutation of \(B_{a,b}(x,y)\).

Since the nonempty \(B_{a,b}(x,y)\) form a partition of \([n]\), every cycle of \(g\) lies in exactly one such block. Therefore
\[
\ell(g) = \sum_{\substack{1\le a,b\le k\\B_{a,b}(x,y)\neq\emptyset}}
\ell\!\left(g|_{B_{a,b}(x,y)}\right). \qedhere
\]
\end{proof}

The formulas for the transition kernel and stationary distribution of the general twisted Burnside process now give the following explicit description of the twisted Bose--Einstein Markov chain.

\begin{proposition}
\label{prop:BE-kernel}
Let $K$ and $\pi$ be the transition kernel and the stationary distribution of the twisted Bose--Einstein Markov chain on $[k]^n$. Then for all $x,y \in [k]^n$,
\begin{equation}\label{eq:BE-kernel}
 K(x,y)
 =
 \prod_{a=1}^k
 \frac{\displaystyle\prod_{b=1}^k
 \left(\frac{\theta}{k}\right)_{(m_{a,b}(x,y))}}
 {(\theta)_{(i_a(x))}}
 \end{equation}
and
\begin{equation} \label{eq:BE-pi}
 \pi(x)
 =
 \frac{\displaystyle\prod_{a=1}^k(\theta)_{(i_a(x))}}
 {(k\theta)_{(n)}}.
\end{equation}
\end{proposition}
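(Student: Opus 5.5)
We specialize the general twisted Burnside formulas \eqref{eq:general-twisted-Burnside-kernel} and \eqref{eq:general-twisted-Burnside-pi} with $w(\sigma)=\theta^{\ell(\sigma)}$ and $v\equiv 1$. By Lemma~\ref{LemmaKeyFacts}(i), $V(\sigma)=|([k]^n)_\sigma|=k^{\ell(\sigma)}$, so
\[
K(x,y)=\frac{1}{W(x)}\sum_{g\in G_x\cap G_y}\Bigl(\frac{\theta}{k}\Bigr)^{\ell(g)},\qquad W(x)=\sum_{g\in G_x}\theta^{\ell(g)}.
\]
The proof therefore reduces to evaluating two cycle-weighted sums over Young subgroups.

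For the denominator, Lemma~\ref{LemmaKeyFacts}(ii) identifies $G_x$ with $\prod_a S_{I_a(x)}$ via restriction. The cycles of $g$ are the union of the cycles of its restrictions to the blocks $I_a(x)$, so $\ell(g)=\sum_a \ell(g|_{I_a(x)})$, with the convention $\ell(e)=0$ on $S_0$. The sum therefore factors, and applying \eqref{eq:stirl1} blockwise gives
\[
W(x)=\prod_{a=1}^k(\theta)_{(i_a(x))}.
\]

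For the numerator, we use Lemma~\ref{LemmaKeyFacts}(iv), which gives $G_x\cap G_y\cong\prod_{a,b}S_{B_{a,b}(x,y)}$, together with the cycle additivity in Lemma~\ref{LemmaKeyFacts}(v). The same factorization and \eqref{eq:stirl1} with $z=\theta/k$ give
\[
\sum_{g\in G_x\cap G_y}\Bigl(\frac{\theta}{k}\Bigr)^{\ell(g)}=\prod_{a,b=1}^k\Bigl(\frac{\theta}{k}\Bigr)_{(m_{a,b}(x,y))},
\]
where empty blocks contribute $(\theta/k)_{(0)}=1$. Dividing by $W(x)$ and grouping the factors by $a$ yields \eqref{eq:BE-kernel}.

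For $\pi$, \eqref{eq:general-twisted-Burnside-pi} gives $\pi(x)=W(x)/Z$, so it remains to compute
\[
Z=\sum_{\sigma\in S_n}\theta^{\ell(\sigma)}k^{\ell(\sigma)}=\sum_{\sigma\in S_n}(k\theta)^{\ell(\sigma)}=(k\theta)_{(n)},
\]
again by \eqref{eq:stirl1}. This gives \eqref{eq:BE-pi}.

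The only delicate point is the bookkeeping in the factorization. The isomorphism of Proposition~\ref{prop:young-product} must be shown to preserve the cycle count additively, which is exactly Lemma~\ref{LemmaKeyFacts}(v) for intersections and the analogous statement for $G_x$. Empty blocks must also be handled correctly, which the $(z)_{(0)}=1$ and $S_0=\{e\}$ conventions do. Everything else is direct substitution.
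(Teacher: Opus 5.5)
Your proposal is correct and follows the paper's proof essentially step for step. You specialize the general twisted Burnside kernel using $V(\sigma)=k^{\ell(\sigma)}$, factor the cycle-weighted sums over the Young subgroups $G_x$ and $G_x\cap G_y$ blockwise via \eqref{eq:stirl1}, and evaluate $Z=\sum_{\sigma\in S_n}(k\theta)^{\ell(\sigma)}=(k\theta)_{(n)}$; the paper gets this last identity by explicitly interchanging the sums over $x$ and $\sigma$, which is the same as your direct use of $Z=\sum_g w(g)V(g)$.
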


\begin{proof}
Lemma~\ref{LemmaKeyFacts}\textup{(i)} gives
\[
 V(\sigma)=|([k]^n)_\sigma|=k^{\ell(\sigma)}.
\]
Substituting $v\equiv1$, $w(\sigma)=\theta^{\ell(\sigma)}$, and this value of $V(\sigma)$ into \eqref{eq:general-twisted-Burnside-kernel} gives 
\begin{equation}\label{eq:twisted-group-sum}
 K(x,y)
 =\frac1{W(x)}
 \sum_{\sigma\in G_x\cap G_y}
 \left(\frac\theta k\right)^{\ell(\sigma)}.
\end{equation}
Equation~\eqref{eq:general-twisted-Burnside-pi} also gives
\[
 \pi(x)=\frac{W(x)}{Z},
 \qquad
 Z=\sum_{z\in[k]^n}W(z).
\]
By Lemma~\ref{LemmaKeyFacts}\textup{(ii)} and \eqref{eq:stirl1},
\[
 W(x)
 =\prod_{a=1}^k
 \sum_{\tau\in S_{i_a(x)}}\theta^{\ell(\tau)}
 =\prod_{a=1}^k(\theta)_{(i_a(x))}.
\]
Lemma~\ref{LemmaKeyFacts}\textup{(iv)--(v)} similarly gives
\begin{align*}
 \sum_{\sigma\in G_x\cap G_y}
 \left(\frac\theta k\right)^{\ell(\sigma)}
 &=\prod_{a,b=1}^k
 \sum_{\tau\in S_{m_{a,b}(x,y)}}
 \left(\frac\theta k\right)^{\ell(\tau)} =\prod_{a,b=1}^k
 \left(\frac\theta k\right)_{(m_{a,b}(x,y))}.
\end{align*}
This proves the kernel formula. Moreover,
\begin{align*}
 \sum_{x\in[k]^n}W(x)
 &=\sum_{x\in[k]^n}\sum_{\sigma\in G_x}
 \theta^{\ell(\sigma)}=\sum_{\sigma\in S_n}\sum_{x\in([k]^n)_\sigma}
 \theta^{\ell(\sigma)}\\
 &=\sum_{\sigma\in S_n}
 \theta^{\ell(\sigma)}|([k]^n)_\sigma|
 =\sum_{\sigma\in S_n}(k\theta)^{\ell(\sigma)}
 =(k\theta)_{(n)},
\end{align*}
where the last equality is \eqref{eq:stirl1}. Together with the formula for $W(x)$, this proves the stationary formula.
\end{proof}

Setting $\theta=1$, the twisted Bose--Einstein Markov chain recovers the ordinary Bose--Einstein Markov chain on $[k]^n$. Denote its transition kernel and stationary distribution by $K_{\BE}$ and $\pi_{\BE}$, respectively.

\begin{corollary}
\label{cor:ordinary-BE-kernel}
Let $K_{\BE}$ and $\pi_{\BE}$ be the transition kernel and the stationary distribution of the Bose--Einstein Markov chain on $[k]^n$. Then for all $x,y \in [k]^n$,
\begin{equation}\label{eq:ordinary-BE-kernel}
 K_{\BE}(x,y)
 =\frac{1}{\prod_{a=1}^k i_a(x)!}
 \prod_{a,b=1}^k
 \left(\frac1k\right)_{(m_{a,b}(x,y))}
\end{equation}
and
\begin{equation}\label{eq:ordinary-BE-pi}
 \pi_{\BE}(x)
 =\frac{\prod_{a=1}^k i_a(x)!}
 {n!\binom{n+k-1}{k-1}}
 =\frac{\prod_{a=1}^k i_a(x)!}{(k)_{(n)}}.
\end{equation}
\end{corollary}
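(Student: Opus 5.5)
This is the specialization $\theta=1$ of Proposition~\ref{prop:BE-kernel}, so the plan is to substitute $\theta=1$ into \eqref{eq:BE-kernel} and \eqref{eq:BE-pi} and simplify the rising factorials.

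For the kernel, use $(1)_{(m)}=m!$, so each denominator $(\theta)_{(i_a(x))}$ becomes $i_a(x)!$. The numerator $\prod_{a,b}(\theta/k)_{(m_{a,b}(x,y))}$ becomes $\prod_{a,b}(1/k)_{(m_{a,b}(x,y))}$. Pulling the product over $a$ of the denominators out front gives \eqref{eq:ordinary-BE-kernel} directly.

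For the stationary distribution, the numerator of \eqref{eq:BE-pi} becomes $\prod_a i_a(x)!$ and the denominator becomes $(k)_{(n)}$. This gives the second form in \eqref{eq:ordinary-BE-pi}. For the first form it remains to check that $(k)_{(n)}=n!\binom{n+k-1}{k-1}$. Write
\[
(k)_{(n)}=k(k+1)\cdots(k+n-1)=\frac{(n+k-1)!}{(k-1)!}=n!\,\frac{(n+k-1)!}{n!\,(k-1)!}=n!\binom{n+k-1}{k-1}.
\]
There is no real obstacle. The only point to state explicitly is that setting $w\equiv1$ (that is, $\theta=1$) in the twisted Burnside process recovers the ordinary Burnside process, so $K_{\BE}$ and $\pi_{\BE}$ are indeed these specializations. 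A sanity check: $\sum_x \pi_{\BE}(x)=1$ matches the count $|\Comp_{n,k}|$ from Lemma~\ref{LemmaKeyFacts}(iii), since each orbit with histogram $\bm i$ has $n!/\prod_a i_a!$ elements. Hence each orbit receives mass $1/\binom{n+k-1}{k-1}$, which is the uniform lumped law.
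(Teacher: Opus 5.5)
Your proposal is correct and is essentially the paper's own proof: set $\theta=1$ in Proposition~\ref{prop:BE-kernel} and simplify using $(1)_{(m)}=m!$ and $(k)_{(n)}=n!\binom{n+k-1}{k-1}$. Your explicit check of that last identity and the normalization sanity check are both accurate.
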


\begin{proof}
Set $\theta=1$ in Proposition~\ref{prop:BE-kernel} and use
\[
 (1)_{(i_a(x))}=i_a(x)!,
 \qquad
 (k)_{(n)}=n!\binom{n+k-1}{k-1}. \qedhere
\]
\end{proof}

\begin{remark}[Equivalence with Crane's symmetric Dirichlet cut-and-paste process]
\label{rem:cut-and-paste}
The symmetric Dirichlet cut-and-paste process in Crane~\cite[Example~1.7, Equation~(1.8)]{Crane14} has independent row laws
\[
 \Dirichlet\left(\frac\theta k,\ldots,\frac\theta k\right), 
\]
and has exactly the transition kernel \eqref{eq:BE-kernel} and stationary distribution \eqref{eq:BE-pi}. Therefore, Crane's cut-and-paste process and the twisted Bose--Einstein Markov chain have identical transition kernels and stationary distributions, not merely identical orbit lumpings. We use this equivalence later to obtain the Dirichlet random-matrix representation of $K$.
\end{remark}

\subsection{The Lumped Twisted Bose--Einstein Markov Chain}
\label{SectionLumpedBE}

By Lemma~\ref{LemmaKeyFacts}\textup{(iii)}, the histogram map $\hist:[k]^n\to\Comp_{n,k}$ identifies the $S_n$-orbit space with $\Comp_{n,k}$. Thus, for $\bm{i}\in\Comp_{n,k}$,
\[
 O_{\bm{i}}=\hist^{-1}(\bm{i})
 =\{x\in[k]^n:\hist(x)=\bm{i}\}.
\]
The coordinate-permutation invariance of \eqref{eq:BE-kernel} makes this partition strongly lumpable. Thus, for any $x\in O_{\bm{i}}$, define
\[
 \overline K(\bm{i},\bm{j})
 :=\sum_{z\in O_{\bm{j}}}K(x,z),
 \qquad
 \overline\pi(\bm{i})
 :=\sum_{z\in O_{\bm{i}}}\pi(z).
\]
For every function $h:\Comp_{n,k}\to\mathbb R$, strong lumpability gives
\begin{equation}\label{eq:histogram-intertwining}
 K(h\circ\hist)=(\overline Kh)\circ\hist,
 \qquad
 \pi(h\circ\hist)=\overline\pi(h).
\end{equation}
Moreover, pushing both measures through $\hist$ gives
\begin{equation}\label{eq:histogram-TV-contraction}
 \left\|\overline K^t(\hist(x),\cdot)-\overline\pi\right\|_{\TV}
 \le
 \left\|K^t(x,\cdot)-\pi\right\|_{\TV}.
\end{equation}
Set
\[
 \overline d(t):=\max_{\bm{i}\in\Comp_{n,k}}
 \left\|\overline K^t(\bm{i},\cdot)-\overline\pi\right\|_{\TV},
 \qquad
 \overline t_{\mix}(\varepsilon):=\min\{t\ge0:\overline d(t)\le\varepsilon\}.
\]

\begin{proposition}[Lumped twisted Bose--Einstein kernel]
\label{prop:lumped-BE-kernel}
Let $\overline K$ and $\overline\pi$ be the transition kernel and stationary distribution of the lumped twisted Bose--Einstein Markov chain on $\Comp_{n,k}$. Then for all $\bm{i},\bm{j}\in\Comp_{n,k}$,
\begin{equation}\label{eq:lumped-BE-kernel}
 \overline K(\bm{i},\bm{j})
 =
 \sum_{M=(m_{a,b})\in\mc(\bm{i},\bm{j})}
 \prod_{a=1}^k\frac{i_a!}{(\theta)_{(i_a)}}
 \prod_{a,b=1}^k
 \frac{(\theta/k)_{(m_{a,b})}}{m_{a,b}!}
\end{equation}
and
\begin{equation}\label{eq:lumped-BE-pi}
 \overline\pi(\bm{i})
 =
 \frac{n!}{(k\theta)_{(n)}}
 \prod_{a=1}^k\frac{(\theta)_{(i_a)}}{i_a!}.
\end{equation}
In particular, $\overline\pi$ is the Dirichlet--multinomial distribution with parameters $n$ and $(\theta,\ldots,\theta)$.
\end{proposition}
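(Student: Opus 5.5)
The plan is to compute $\overline K(\bm{i},\bm{j})$ directly from the definition. Fix the representative $x=\bar x\in O_{\bm{i}}$, the weakly increasing word with histogram $\bm{i}$. We then sum the explicit kernel \eqref{eq:BE-kernel} of Proposition~\ref{prop:BE-kernel} over all $z\in O_{\bm{j}}$. By strong lumpability (already noted from coordinate-permutation invariance), the choice of representative does not matter.

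The key observation is that the summand $K(x,z)$ depends on $z$ only through the density matrix $M=\dens(x,z)$. Indeed, the denominator $\prod_a(\theta)_{(i_a)}$ depends only on $\bm{i}$, and the numerator is $\prod_{a,b}(\theta/k)_{(m_{a,b})}$.

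For $z\in O_{\bm{j}}$ the matrix $\dens(x,z)$ has row sums $i_a$ (since $\sum_b m_{a,b}=|I_a(x)|$) and column sums $j_b$ (since $\sum_a m_{a,b}=|I_b(z)|$), so it lies in $\mc(\bm{i},\bm{j})$. We therefore group the sum by $M\in\mc(\bm{i},\bm{j})$ and count
\[
N(M)=\#\{z\in[k]^n:\dens(x,z)=M\}.
\]
Specifying such a $z$ amounts to choosing, independently for each row $a$, an ordered set partition of $I_a(x)$ into blocks $B_{a,1},\ldots,B_{a,k}$ of sizes $m_{a,1},\ldots,m_{a,k}$, and then setting $z_r=b$ on $B_{a,b}$. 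Hence
\[
N(M)=\prod_{a=1}^k\frac{i_a!}{\prod_b m_{a,b}!}.
\]
Any such $z$ automatically has histogram $\bm{j}$ because the column sums are $j_b$. Multiplying $N(M)$ by the common value of $K(x,z)$ and summing over $M$ gives exactly \eqref{eq:lumped-BE-kernel}. This counting step, namely the bijection between $\{z:\dens(x,z)=M\}$ and tuples of multinomial splittings of the level sets, is the only place requiring care. It is routine but is the main thing to verify.

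For the stationary law we sum \eqref{eq:BE-pi} over $O_{\bm{i}}$. Since $\pi(z)$ depends only on $\hist(z)=\bm{i}$, we get $\overline\pi(\bm{i})=|O_{\bm{i}}|\,\pi(\bar x)$. Here $|O_{\bm{i}}|=n!/\prod_a i_a!$ by the orbit–stabilizer theorem together with Lemma~\ref{LemmaKeyFacts}(ii), or directly as a multinomial coefficient. This yields \eqref{eq:lumped-BE-pi}.

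Finally, to identify this with the Dirichlet–multinomial law, we write the standard Dirichlet–multinomial mass as $\binom{n}{\bm{i}}\,\mathbb E\bigl[\prod_a Z_a^{i_a}\bigr]$ with $Z\sim\Dirichlet(\theta,\ldots,\theta)$. Evaluating the expectation by \eqref{eq:Dirichlet-moments} with $\gamma_0=k\theta$ gives $\frac{n!}{\prod_a i_a!}\cdot\frac{\prod_a(\theta)_{(i_a)}}{(k\theta)_{(n)}}$, which matches \eqref{eq:lumped-BE-pi}.
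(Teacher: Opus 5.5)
Your proposal is correct and follows the paper's proof essentially step for step. You fix a representative $x\in O_{\bm i}$, note that $K(x,z)$ depends on $z$ only through $\dens(x,z)\in\mc(\bm i,\bm j)$, count each fibre $\{z:\dens(x,z)=M\}$ as $\prod_a i_a!/\prod_{b} m_{a,b}!$, and obtain $\overline\pi(\bm i)=|O_{\bm i}|\,\pi(x)$ with $|O_{\bm i}|=n!/\prod_a i_a!$. Your final check of the Dirichlet--multinomial identification, via \eqref{eq:Dirichlet-moments} with $\gamma_0=k\theta$, is a small but welcome addition, since the paper states that identification without proof.
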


\begin{proof}
Fix $x\in O_{\bm{i}}$. By Proposition~\ref{prop:BE-kernel},
\[
 \overline K(\bm{i},\bm{j})
 =
 \sum_{\substack{z\in[k]^n\\\hist(z)=\bm{j}}}
 \prod_{a=1}^k
 \frac{\prod_{b=1}^k
 (\theta/k)_{(m_{a,b}(x,z))}}
 {(\theta)_{(i_a)}}.
\]
For every such $z$, the contingency table $\dens(x,z)$ belongs to $\mc(\bm{i},\bm{j})$, and
\[
 O_{\bm j}
 =\bigsqcup_{M\in\mc(\bm i,\bm j)}
 \{z\in[k]^n:\dens(x,z)=M\}.
\]
Fix $M=(m_{a,b})\in\mc(\bm{i},\bm{j})$. The words $z$ satisfying $\dens(x,z)=M$ are obtained, independently for each $a$, by assigning the labels $1,\ldots,k$ to the $i_a$ coordinates in $I_a(x)$ with multiplicities $m_{a,1},\ldots,m_{a,k}$. Hence
\[
 \bigl|\{z\in[k]^n:\dens(x,z)=M\}\bigr|
 =
 \prod_{a=1}^k
 \binom{i_a}{m_{a,1},\ldots,m_{a,k}}
 =
 \frac{\prod_{a=1}^k i_a!}
 {\prod_{a,b=1}^k m_{a,b}!}.
\]
Therefore, grouping the target words according to $M=\dens(x,z)$ gives
\begin{align*}
 \overline K(\bm{i},\bm{j})
 &=
 \sum_{M=(m_{a,b})\in\mc(\bm{i},\bm{j})}
 \frac{\prod_{a=1}^k i_a!}
 {\prod_{a,b=1}^k m_{a,b}!}
 \prod_{a=1}^k
 \frac{\prod_{b=1}^k(\theta/k)_{(m_{a,b})}}
 {(\theta)_{(i_a)}}\\
 &=
 \sum_{M=(m_{a,b})\in\mc(\bm{i},\bm{j})}
 \prod_{a=1}^k\frac{i_a!}{(\theta)_{(i_a)}}
 \prod_{a,b=1}^k
 \frac{(\theta/k)_{(m_{a,b})}}{m_{a,b}!},
\end{align*}
which proves \eqref{eq:lumped-BE-kernel}.

For the stationary distribution, \eqref{eq:BE-pi} is constant on $O_{\bm{i}}$, and
\[
 |O_{\bm{i}}|=\frac{n!}{\prod_{a=1}^k i_a!}.
\]
Therefore
\[
 \overline\pi(\bm{i})
 =|O_{\bm{i}}|\pi(x)
 =
 \frac{n!}{(k\theta)_{(n)}}
 \prod_{a=1}^k\frac{(\theta)_{(i_a)}}{i_a!}. \qedhere
\]
\end{proof}

Setting $\theta=1$ yields the corresponding transition kernel, $\overline K_{\BE}$, and stationary distribution, $\overline\pi_{\BE}$, of the lumped Bose--Einstein Markov chain.  

\begin{corollary}
\label{cor:ordinary-lumped-BE-kernel}
Let $\overline K_{\BE}$ and $\overline\pi_{\BE}$ be the transition kernel and stationary distribution of the lumped Bose--Einstein Markov chain on $\Comp_{n,k}$. Then for all $\bm{i},\bm{j}\in\Comp_{n,k}$,
\begin{equation}\label{eq:ordinary-lumped-BE-kernel}
 \overline K_{\BE}(\bm{i},\bm{j})
 =
 \sum_{M=(m_{a,b})\in\mc(\bm{i},\bm{j})}
 \prod_{a,b=1}^k
 \frac{(1/k)_{(m_{a,b})}}{m_{a,b}!},
\end{equation}
and
\begin{equation}\label{eq:ordinary-lumped-BE-pi}
 \overline\pi_{\BE}(\bm{i})
 =
 \frac1{\binom{n+k-1}{k-1}}.
\end{equation}
\end{corollary}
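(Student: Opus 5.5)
The formula should follow by specializing Proposition~\ref{prop:lumped-BE-kernel} at $\theta=1$, exactly as Corollary~\ref{cor:ordinary-BE-kernel} was obtained from Proposition~\ref{prop:BE-kernel}. The only identity needed is $(1)_{(m)}=m!$ for every nonnegative integer $m$, including $m=0$ under the empty-product convention.

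For the kernel \eqref{eq:ordinary-lumped-BE-kernel}, substitute $\theta=1$ into \eqref{eq:lumped-BE-kernel}. Each row factor becomes $i_a!/(1)_{(i_a)}=i_a!/i_a!=1$, so the product $\prod_{a=1}^k i_a!/(\theta)_{(i_a)}$ disappears. The remaining summand over $M\in\mc(\bm i,\bm j)$ is $\prod_{a,b}(1/k)_{(m_{a,b})}/m_{a,b}!$, as claimed. No rearrangement of the sum over contingency tables is needed, since the summation set does not depend on $\theta$.

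For the stationary law \eqref{eq:ordinary-lumped-BE-pi}, substitute $\theta=1$ into \eqref{eq:lumped-BE-pi}. Again $(1)_{(i_a)}/i_a!=1$ for each $a$, which leaves $n!/(k)_{(n)}$. Corollary~\ref{cor:ordinary-BE-kernel} records $(k)_{(n)}=n!\binom{n+k-1}{k-1}$; one can also check directly that $(k)_{(n)}=(k+n-1)!/(k-1)!$. Hence $\overline\pi_{\BE}(\bm i)=\binom{n+k-1}{k-1}^{-1}$.

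As a consistency check, this value is uniform on $\Comp_{n,k}$. That agrees with Lemma~\ref{LemmaKeyFacts}(iii) and with the general fact that the lumped ordinary Burnside process is uniform on orbits. No step is a genuine obstacle; the only care needed is handling the zero entries $m_{a,b}=0$ and $i_a=0$ via the convention $(z)_{(0)}=1=0!$.
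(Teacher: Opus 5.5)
Your proposal is correct and matches the paper's proof exactly: set $\theta=1$ in Proposition~\ref{prop:lumped-BE-kernel} and use $(1)_{(m)}=m!$ together with $(k)_{(n)}=n!\binom{n+k-1}{k-1}$. Your uniformity consistency check is a reasonable sanity check but is not needed.
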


\begin{proof}
Set $\theta=1$ in Proposition~\ref{prop:lumped-BE-kernel} and use $(1)_{(i_a)}=i_a!$ and $(k)_{(n)}=n!\binom{n+k-1}{k-1}$.
\end{proof}

\begin{remark}[Recovery of Diaconis's formulas]
Corollary~\ref{cor:ordinary-BE-kernel} gives the explicit labeled transition kernel for the update in \cite[Equation~(2.1)]{Dia05}, while Corollary~\ref{cor:ordinary-lumped-BE-kernel} gives the orbit chain studied there. For $k=2$, \eqref{eq:ordinary-lumped-BE-kernel} gives Equations~(3.1)--(3.2) of \cite{Dia05}; reversibility and label exchange give the symmetries in Equation~(3.3). For general $k$, starting from the constant histogram $(n,0,\ldots,0)$ leaves a unique contingency table and gives
\[
 \overline K_{\BE}\bigl((n,0,\ldots,0),\bm{j}\bigr)
 =
 \prod_{b=1}^k\frac{(1/k)_{(j_b)}}{j_b!},
\]
which is Equation~(3.6) of \cite{Dia05}. Equation~\eqref{eq:ordinary-lumped-BE-pi} is the uniform Bose--Einstein stationary law used there. Thus these transition and stationary formulas follow from the general formulas above.
\end{remark}

\section{Application to the Burnside Process on Parking Functions}
\label{SectionComparison}

In this section, we compute the transition kernels for the Burnside process on $\PF_n$ and the lumped Burnside process on $\IPF_n$. We obtain these formulas from the preceding labeled and lumped chains at $\theta=1$ and $k=n+1$, together with the rescaling relation below. Pollak's cyclic-shift bijection then upgrades the comparison to exact tensor factorizations, which transfer the corresponding mixing and spectral results to the parking-function chain and its lumped process.

\subsection{Transition Kernel for the Burnside Process on \texorpdfstring{$\PF_n$}{PF\_n}}

Regard $\PF_n$ as a subset of $[n+1]^n$ via the inclusion map $[n]\hookrightarrow[n+1]$.
The following lemma gives a comparison of the transition kernels and stationary distributions between the Bose--Einstein Markov chain and the Burnside process on $\PF_n$. 

\begin{lemma}\label{lem:PF-vs-BE}
Let $K_{\PF}$ and $\pi_{\PF}$ be the transition kernel and stationary distribution of the Burnside process on $\PF_n$. Let $K_{\BE}$ and $\pi_{\BE}$ be the transition kernel and stationary distribution of the Bose--Einstein Markov chain on $[n+1]^n$. Then for all $x,y\in\PF_n$,
\[
 K_{\PF}(x,y)=(n+1)K_{\BE}(x,y)
 \quad \text{and} \quad 
 \pi_{\PF}(x)=(n+1)\pi_{\BE}(x).
\]
\end{lemma}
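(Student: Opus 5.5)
We will compare the two Burnside kernels directly through the general formula \eqref{BPTransEqn}, with $w\equiv v\equiv 1$. The key point is that the $S_n$-action on $\PF_n$ is the restriction of the coordinate action on $[n+1]^n$, since parking functions are closed under coordinate permutations. Hence for $x\in\PF_n$ the stabilizer of $x$ is the same subgroup $G_x\le S_n$ whether $x$ is viewed in $\PF_n$ or in $[n+1]^n$. Likewise, for $x,y\in\PF_n$ the intersection $G_x\cap G_y$ is the same in both settings. The only ingredient that changes is the size of the fixed set.

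By Lemma~\ref{LemmaKeyFacts}(i) with $k=n+1$, we have $|([n+1]^n)_\sigma|=(n+1)^{\ell(\sigma)}$ and $|(\PF_n)_\sigma|=(n+1)^{\ell(\sigma)-1}$. Thus $1/|(\PF_n)_\sigma|=(n+1)/|([n+1]^n)_\sigma|$ for every $\sigma\in S_n$. Substituting this into \eqref{BPTransEqn} term by term gives
\[
K_{\PF}(x,y)=\sum_{\sigma\in G_x\cap G_y}\frac{1}{|G_x|}\frac{n+1}{(n+1)^{\ell(\sigma)}}=(n+1)K_{\BE}(x,y).
\]
Alternatively, one can quote Corollary~\ref{cor:ordinary-BE-kernel} for the closed form of $K_{\BE}$, but the termwise comparison is cleaner.

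For the stationary law, we use \eqref{eq:general-twisted-Burnside-pi}: $\pi(x)=|G_x|/Z$, where $Z=\sum_{\sigma}|X_\sigma|$. This gives $\pi_{\PF}(x)=|G_x|/Z_{\PF}$ and $\pi_{\BE}(x)=|G_x|/Z_{\BE}$. Summing the fixed-set sizes over $\sigma$ yields $Z_{\BE}=\sum_\sigma (n+1)^{\ell(\sigma)}=(n+1)\sum_\sigma(n+1)^{\ell(\sigma)-1}=(n+1)Z_{\PF}$, and hence $\pi_{\PF}(x)=(n+1)\pi_{\BE}(x)$.

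As a sanity check, one can instead use Corollary~\ref{cor:ordinary-BE-kernel}. By \eqref{eq:stirl1}, $Z_{\BE}=(n+1)_{(n)}=(2n)!/n!$. Then $Z_{\PF}=(2n)!/((n+1)!)=n!\,C_n$, which is consistent with $\pi_{\PF}(x)=1/(C_n|O_x|)$.

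The only step that is not bookkeeping is the character identity $|(\PF_n)_\sigma|=(n+1)^{\ell(\sigma)-1}$. This is already supplied by Lemma~\ref{LemmaKeyFacts}(i), which cites Stanley, so there is no real obstacle. The one thing to verify carefully is that the stabilizer subgroups agree in the two settings. This holds because the action on $\PF_n$ is literally the restricted coordinate action.
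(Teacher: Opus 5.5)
Your proof is correct, and the kernel comparison is exactly the paper's: identical stabilizers, the fixed-set counts from Lemma~\ref{LemmaKeyFacts}(i), and a termwise comparison in \eqref{BPTransEqn}. For the stationary law, the paper uses $\pi(x)=1/(z|O_x|)$ with the orbit counts $C_n$ and $\binom{2n}{n}=(n+1)C_n$ from Lemma~\ref{LemmaKeyFacts}(iii), whereas you use $\pi(x)=|G_x|/\sum_\sigma|X_\sigma|$ and reuse part (i). The two are equivalent by Burnside's lemma, and your version has the minor advantage of resting on the single fixed-set identity.
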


\begin{proof}
In both state spaces, $S_n$ acts by permuting coordinates, so the stabilizer $G_x$ is the same. By \eqref{BPTransEqn},
\begin{align*}
 K_{\PF}(x,y)
 &=\frac1{|G_x|}\sum_{\sigma\in G_x\cap G_y}
 \frac1{|(\PF_n)_\sigma|},\\
 K_{\BE}(x,y)
 &=\frac1{|G_x|}\sum_{\sigma\in G_x\cap G_y}
 \frac1{|([n+1]^n)_\sigma|}.
\end{align*}
Lemma~\ref{LemmaKeyFacts}\textup{(i)} gives
\[
 |([n+1]^n)_\sigma|=(n+1)^{\ell(\sigma)},
 \qquad
 |(\PF_n)_\sigma|=(n+1)^{\ell(\sigma)-1},
\]
and hence $K_{\PF}(x,y)=(n+1)K_{\BE}(x,y)$. For the stationary laws, the $S_n$-orbit of $x\in\PF_n$ is unchanged when $x$ is viewed in $[n+1]^n$, since $\PF_n$ is invariant under coordinate permutations. By Lemma~\ref{LemmaKeyFacts}\textup{(iii)}, the two actions have $C_n$ and
\[
 |\Comp_{n,n+1}|=\binom{2n}{n}=(n+1)C_n
\]
orbits, respectively, where $C_n$ is the $n$th Catalan number. Therefore \eqref{BPStationaryEqn} gives
\[
 \pi_{\PF}(x)=(n+1)\pi_{\BE}(x). \qedhere
\]
\end{proof}

The Bose--Einstein formulas immediately give the following corollary.

\begin{corollary}\label{BPTransition}
Let $K_{\PF}$ and $\pi_{\PF}$ be the transition kernel and stationary distribution of the Burnside process on $\PF_n$. Then for all $x,y\in \PF_n$,
\bal
K_{\PF}(x,y) = \frac{n+1}{\prod_{a=1}^n i_a(x)!} \prod_{a,b = 1}^n \left(\frac{1}{n+1}\right)_{(m_{a,b}(x,y))}
\eal
and
\bal
\pi_{\PF}(x) = \frac{\prod_{a=1}^n i_a(x)!}{n! C_n},
\eal
where $C_n$ is the $n$th Catalan number.
\end{corollary}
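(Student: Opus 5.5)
Corollary~\ref{BPTransition} follows by substituting $k=n+1$ and $\theta=1$ into Corollary~\ref{cor:ordinary-BE-kernel} and then rescaling with Lemma~\ref{lem:PF-vs-BE}. One bookkeeping point needs attention: the products in Corollary~\ref{cor:ordinary-BE-kernel} run over $a,b\in[n+1]$, while the statement uses $a,b\in[n]$. So the first step is to check that the value $n+1$ contributes nothing for parking functions.

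For $x\in\PF_n\subseteq[n+1]^n$ every coordinate lies in $[n]$, so $i_{n+1}(x)=0$. The same holds for $y\in\PF_n$, so for all $a,b\in[n+1]$ we have $m_{n+1,b}(x,y)=m_{a,n+1}(x,y)=0$. By the empty-product conventions $0!=1$ and $(1/(n+1))_{(0)}=1$, the factors indexed by $n+1$ equal $1$. The products over $[n+1]$ therefore reduce to products over $[n]$.

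With this, Corollary~\ref{cor:ordinary-BE-kernel} gives
\[
K_{\BE}(x,y)=\frac{1}{\prod_{a=1}^n i_a(x)!}\prod_{a,b=1}^n\left(\frac{1}{n+1}\right)_{(m_{a,b}(x,y))},
\qquad
\pi_{\BE}(x)=\frac{\prod_{a=1}^n i_a(x)!}{n!\binom{2n}{n}},
\]
where we used $\binom{n+k-1}{k-1}=\binom{2n}{n}$ at $k=n+1$. Lemma~\ref{lem:PF-vs-BE} multiplies both quantities by $n+1$. For the kernel this gives the stated formula directly. For the stationary law, the identity $\binom{2n}{n}=(n+1)C_n$ gives
\[
\pi_{\PF}(x)=\frac{(n+1)\prod_{a=1}^n i_a(x)!}{n!\,(n+1)C_n}=\frac{\prod_{a=1}^n i_a(x)!}{n!\,C_n}.
\]

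The argument is purely formal and has no real obstacle. A useful sanity check on the normalization is that summing $\pi_{\PF}$ over an orbit gives $|O_x|\pi_{\PF}(x)=\frac{n!}{\prod_a i_a(x)!}\cdot\frac{\prod_a i_a(x)!}{n!\,C_n}=1/C_n$. This agrees with the uniform lumped law on the $C_n$ orbits from Lemma~\ref{LemmaKeyFacts}(iii).
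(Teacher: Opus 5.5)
Your proof is correct and matches the paper's: the paper also obtains the corollary by combining Lemma~\ref{lem:PF-vs-BE} with Corollary~\ref{cor:ordinary-BE-kernel} at $k=n+1$, noting that the unused label $n+1$ contributes only unit factors. You additionally write out the bookkeeping the paper leaves implicit, namely the identity $\binom{2n}{n}=(n+1)C_n$ and the orbit-sum normalization check.
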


\begin{proof}
Apply Lemma~\ref{lem:PF-vs-BE} and Corollary~\ref{cor:ordinary-BE-kernel} with $k=n+1$; the unused label $n+1$ contributes only unit factors.
\end{proof}

Thus the parking-function kernel is the rescaled restriction to $\PF_n$ of the twisted Bose--Einstein kernel at $\theta=1$, which is the ordinary Bose--Einstein kernel. The transfer theorem below upgrades this relation to an exact tensor factorization.

\subsection{Transition Kernel for the Lumped Burnside Process on \texorpdfstring{$\IPF_n$}{IPF\_n}}

Recall that the $S_n$-orbits of $\PF_n$ are indexed by $\IPF_n$. The general lumped Bose--Einstein formula gives the following corollary.

\begin{corollary}\label{BPLumpedTrans}
Let $\overline K_{\PF}$ and $\overline\pi_{\PF}$ be the transition kernel and uniform stationary distribution of the lumped Burnside process on $\IPF_n$. 
For $u,v\in\IPF_n$, write $\bm{i}:=\hist(u)$ and $\bm{j}:=\hist(v)$. The lumped Burnside process on $\IPF_n$ has transition kernel
\begin{align}
 \overline K_{\PF}(u,v)
 &=(n+1)\overline K_{\BE}\bigl((\bm{i},0),(\bm{j},0)\bigr)\notag\\
 &=
 (n+1)
 \sum_{M=(m_{a,b})\in\mc(\bm{i},\bm{j})}
 \prod_{a,b=1}^n
 \frac{(1/(n+1))_{(m_{a,b})}}{m_{a,b}!}.
 \label{eq:PF-lumped-kernel}
\end{align}
Its stationary distribution is
\[
 \overline\pi_{\PF}(u)
 =(n+1)\overline\pi_{\BE}\bigl((\bm{i},0)\bigr)
 =\frac1{C_n}.
\]
\end{corollary}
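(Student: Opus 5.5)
The plan is to derive Corollary~\ref{BPLumpedTrans} directly from Lemma~\ref{lem:PF-vs-BE} and Proposition~\ref{prop:lumped-BE-kernel} at $\theta=1$, $k=n+1$. We also use the fact that the orbit of $u\in\IPF_n$ in $\PF_n$ coincides with its orbit in $[n+1]^n$. The orbit in $[n+1]^n$ is $O_{(\bm i,0)}$, where $(\bm i,0)\in\Comp_{n,n+1}$ is $\hist(u)$ padded with a zero for the unused label $n+1$.

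\textbf{Step 1 (orbits match).} Since $\PF_n$ is closed under coordinate permutations, the $S_n$-orbit of $u$ inside $\PF_n$ is exactly $O_{(\bm i,0)}=\hist^{-1}((\bm i,0))$. No word with a parking histogram $(\bm i,0)$ fails to be a parking function, because the parking condition depends only on the histogram. Likewise the $\PF_n$-orbit indexed by $v$ is $O_{(\bm j,0)}$.

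\textbf{Step 2 (kernel).} Fix $x$ in the orbit of $u$. By the definition of the lumped chain,
\[
\overline K_{\PF}(u,v)=\sum_{z\in O_{(\bm j,0)}}K_{\PF}(x,z)=(n+1)\sum_{z\in O_{(\bm j,0)}}K_{\BE}(x,z)=(n+1)\overline K_{\BE}\bigl((\bm i,0),(\bm j,0)\bigr).
\]
The second equality uses Lemma~\ref{lem:PF-vs-BE}, which applies because every such $z$ lies in $\PF_n$. The last equality is the definition of $\overline K$ via strong lumpability.

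Next substitute \eqref{eq:ordinary-lumped-BE-kernel} with $k=n+1$. Any contingency table in $\mc((\bm i,0),(\bm j,0))$ has its last row and last column identically zero. Restricting to the upper-left $n\times n$ block therefore gives a bijection with $\mc(\bm i,\bm j)$. The entries $m_{a,b}=0$ contribute factors $(1/(n+1))_{(0)}/0!=1$, so the product collapses to $a,b\le n$. This yields \eqref{eq:PF-lumped-kernel}.

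\textbf{Step 3 (stationary law).} Summing $\pi_{\PF}=(n+1)\pi_{\BE}$ over the common orbit gives $\overline\pi_{\PF}(u)=(n+1)\overline\pi_{\BE}((\bm i,0))$. By \eqref{eq:ordinary-lumped-BE-pi} this equals $(n+1)/\binom{2n}{n}=1/C_n$. This is consistent with the ordinary Burnside lumped chain being uniform on its $C_n$ orbits, as in Lemma~\ref{LemmaKeyFacts}(iii).

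The only delicate point, which is minor, is Step~1. It requires checking that the sum defining the lumped $\PF$ kernel ranges over exactly the full Bose--Einstein orbit, so that the rescaling identity of Lemma~\ref{lem:PF-vs-BE} can be applied termwise. Everything else is bookkeeping of zero entries.
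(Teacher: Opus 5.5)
Your proposal is correct and follows essentially the same route as the paper. Like the paper, you identify the parking functions with histogram $\bm j$ with the full Bose--Einstein orbit $O_{(\bm j,0)}$, sum Lemma~\ref{lem:PF-vs-BE} termwise over it, collapse the contingency tables with zero final row and column, and obtain the stationary law from $\pi_{\PF}=(n+1)\pi_{\BE}$ together with \eqref{eq:ordinary-lumped-BE-pi}.
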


\begin{proof}
Coordinate permutations preserve parking functions. Hence the parking functions with histogram $\bm j$ form the full $S_n$-orbit of $v$, which, under $\PF_n\subset[n+1]^n$, is the Bose--Einstein orbit with histogram $(\bm j,0)$. Summing Lemma~\ref{lem:PF-vs-BE} over this orbit gives
\begin{align*}
 \overline K_{\PF}(u,v)
 &=
 \sum_{\substack{z\in\PF_n:\\ \hist(z)=\bm{j}}}K_{\PF}(u,z) =
 (n+1)
 \overline K_{\BE}\bigl((\bm{i},0),(\bm{j},0)\bigr).
\end{align*}
Every contingency table with row sums $(\bm i,0)$ and column sums $(\bm j,0)$ has zero final row and column, so \eqref{eq:ordinary-lumped-BE-kernel} reduces to \eqref{eq:PF-lumped-kernel}. Similarly, Lemma~\ref{lem:PF-vs-BE} and \eqref{eq:ordinary-lumped-BE-pi} give
\[
 \overline\pi_{\PF}(u)
 =
 (n+1)\overline\pi_{\BE}\bigl((\bm{i},0)\bigr)
 =
 \frac{n+1}{\binom{2n}{n}}
 =
 \frac1{C_n}. \qedhere
\]
\end{proof}

\subsection{Transfer by Global Cyclic Shifts}

Identify $[n+1]=\{1,\ldots,n+1\}$ with $H:=\mathbb Z_{n+1}$ via $a\mapsto a-1\pmod{n+1}$, and write
\[
 c\cdot(x_1,\ldots,x_n):=(x_1+c,\ldots,x_n+c)
\]
for the global shift action of $c\in H$ on $[n+1]^n$. If $c\cdot x=x$, then $x_1+c=x_1$ in $H$, hence $c=0$. Thus the action is free, so
\[
 |[x]_H|=n+1
 \quad\text{and}\quad
 |[n+1]^n/H|=(n+1)^{n-1}.
\]
By Pollak's circular argument \cite[Sec.~2, p.~3]{Sta97}, every $H$-orbit in $[n+1]^n$ contains exactly one parking function. Let
\[
 \Phi:[n+1]^n\longrightarrow\PF_n
\]
denote this unique representative map. Equivalently,
\[
 \Psi:\PF_n\times H\longrightarrow[n+1]^n,
 \qquad
 \Psi(u,c):=c\cdot u,
\]
is a bijection. Finally let $U_H$ denote the {\em uniform refresh} kernel on $H$, so that $U_H(c,d)=1/(n+1)$ for all $c,d\in H$.

The next lemma shows that global shifts are invisible to the Bose--Einstein transition and stationary laws, allowing us to pass to the parking-function quotient.

\begin{lemma}\label{lem:BE-shift}
For all $c,d\in H$ and $x,y\in[n+1]^n$,
\[
 G_{c\cdot x}=G_x,
 \qquad
 \pi_{\BE}(c\cdot x)=\pi_{\BE}(x),
 \qquad
 K_{\BE}(c\cdot x,d\cdot y)=K_{\BE}(x,y).
\]
\end{lemma}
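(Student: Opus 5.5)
The plan is to work entirely with the combinatorial invariants $i_a(\cdot)$ and $m_{a,b}(\cdot,\cdot)$, since the explicit formulas in Corollary~\ref{cor:ordinary-BE-kernel} and Lemma~\ref{LemmaKeyFacts}\textup{(ii)} express everything in terms of these. The key observation is that the global shift $x\mapsto c\cdot x$ acts on values only: under the identification $[n+1]\cong H$ it applies the bijection $\tau_c:a\mapsto a+c$ of $H$ to every coordinate. Hence for each $a\in H$ we have
\[
I_a(c\cdot x)=I_{a-c}(x),
\qquad
B_{a,b}(c\cdot x,d\cdot y)=I_{a-c}(x)\cap I_{b-d}(y)=B_{a-c,\,b-d}(x,y).
\]
So $i_a(c\cdot x)=i_{a-c}(x)$ and $m_{a,b}(c\cdot x,d\cdot y)=m_{a-c,b-d}(x,y)$. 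The multiset of histogram entries and the multiset of density entries are therefore unchanged; they are only reindexed by the bijections $\tau_c$ and $\tau_c\times\tau_d$ of $H$ and $H\times H$.

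I will carry out three steps. First, the stabilizer claim. By the proof of Lemma~\ref{LemmaKeyFacts}\textup{(ii)}, $G_x=S_{\mathcal I(x)}$, the Young subgroup of the value partition. The display above shows $\mathcal I(c\cdot x)=\mathcal I(x)$ as set partitions of $[n]$: the same blocks appear, only with different labels. Therefore $G_{c\cdot x}=G_x$. A more direct route also works: $\sigma$ commutes with the global shift, since $\sigma$ permutes coordinates while $c$ acts on values, so $\sigma x=x$ if and only if $\sigma(c\cdot x)=c\cdot x$.

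Second, the stationary claim. By \eqref{eq:ordinary-BE-pi} with $k=n+1$, $\pi_{\BE}(x)=\prod_a i_a(x)!/(n+1)_{(n)}$. Reindexing the product over $a\in H$ by $\tau_c$ gives invariance. Third, the kernel claim. By \eqref{eq:ordinary-BE-kernel},
\[
K_{\BE}(x,y)=\frac{1}{\prod_a i_a(x)!}\prod_{a,b}\left(\tfrac{1}{n+1}\right)_{(m_{a,b}(x,y))}.
\]
The denominator is invariant by the same reindexing used in the second step. The numerator is a product over all pairs $(a,b)\in H\times H$, so reindexing by $\tau_c\times\tau_d$ leaves it unchanged, with $c$ and $d$ independent.

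Alternatively, the kernel and stationary claims follow structurally from \eqref{BPTransEqn} and \eqref{BPStationaryEqn}. Using $G_{c\cdot x}=G_x$ and $G_{d\cdot y}=G_y$, the summation set $G_x\cap G_y$ in \eqref{BPTransEqn} is unchanged, and the summand $1/|G_x|\,|X_\sigma|$ does not depend on the states. Orbit sizes are likewise preserved, because $|O_x|=n!/|G_x|$. I will present this version, since it uses only the first step.

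There is no real obstacle here. The only point needing care is that $c$ and $d$ are independent, so $m_{a,b}$ gets reindexed by a product bijection on $H\times H$ rather than a diagonal one. The structural argument via stabilizers sidesteps this entirely.
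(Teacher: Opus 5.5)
Your proof is correct. Your three-step plan is essentially the paper's own proof. The paper obtains $G_{c\cdot x}=G_x$ from the commutation $\sigma(c\cdot x)=c\cdot(\sigma x)$. It then notes $i_a(c\cdot x)=i_{a-c}(x)$ and $m_{a,b}(c\cdot x,d\cdot y)=m_{a-c,b-d}(x,y)$, so the shifts only permute the factors in \eqref{eq:ordinary-BE-kernel} and \eqref{eq:ordinary-BE-pi}.

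The version you say you will present is a genuinely different route. It never uses the explicit product formulas. It needs only $G_{c\cdot x}=G_x$, the generic Burnside formulas \eqref{BPTransEqn} and \eqref{BPStationaryEqn}, and orbit--stabilizer $|O_x|=n!/|G_x|$.

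What it buys is generality and a cleaner handling of the two arguments. For any maps $\phi,\psi$ on $X$ with $G_{\phi(x)}=G_x$ and $G_{\psi(y)}=G_y$, one gets $K(\phi x,\psi y)=K(x,y)$. This covers every bijection commuting with the $S_n$-action, not just value relabelings. It also holds for the twisted kernel \eqref{eq:general-twisted-Burnside-kernel} with $v\equiv1$, since $W(x)$ depends only on $G_x$. The independence of $c$ and $d$ is then automatic.

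The paper's route is a one-line consequence of formulas already derived, and it makes the mechanism (a relabeling of values) explicit.

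One wording fix: the summand $1/(|G_x|\,|X_\sigma|)$ does depend on the starting state, through $|G_x|$. What you need is that this factor is unchanged because $G_{c\cdot x}=G_x$. Also write it with parentheses to avoid ambiguity.
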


\begin{proof}
Coordinate permutations commute with global shifts: $\sigma(c\cdot x)=c\cdot(\sigma x)$. Hence $G_{c\cdot x}=G_x$. With indices interpreted modulo $n+1$,
\[
 i_a(c\cdot x)=i_{a-c}(x),
 \qquad
 m_{a,b}(c\cdot x,d\cdot y)
 =m_{a-c,b-d}(x,y).
\]
Thus global shifts merely permute the factors in \eqref{eq:ordinary-BE-kernel} and \eqref{eq:ordinary-BE-pi}, proving the kernel and stationary-distribution identities.
\end{proof}

Define the $H$-orbit projection and pushforward stationary measure by
\[
 \overline K_{\BE,H}([x]_H,[y]_H)
 :=\sum_{u\in[y]_H}K_{\BE}(x,u),
 \qquad
 \overline\pi_{\BE,H}([x]_H)
 :=\sum_{u\in[x]_H}\pi_{\BE}(u).
\]
Let $d_{\IPF}(t)$ and $t_{\mix,\IPF}(\varepsilon)$ be the distance function and mixing time of $\overline K_{\PF}$.

\begin{theorem}\label{thm:BE-PF-transfer}
Let $x,y\in[n+1]^n$. Under the identification $[x]_H\mapsto\Phi(x)$,
\[
 \overline K_{\BE,H}([x]_H,[y]_H)
 =K_{\PF}(\Phi(x),\Phi(y)),
 \qquad
 \overline\pi_{\BE,H}([y]_H)=\pi_{\PF}(\Phi(y)).
\]
Moreover, under the bijection $\Psi$,
\begin{equation}\label{eq:tensor}
 K_{\BE}=K_{\PF}\otimes U_H,
 \qquad
 \pi_{\BE}=\pi_{\PF}\otimes\Unif(H).
\end{equation}
Consequently, with $\overline d$ specialized to $\theta=1$ and $k=n+1$, for all $t\ge1$,
\begin{align}
 \left\|(K_{\BE})^t(x,\cdot)-\pi_{\BE}\right\|_{\TV}
 &=\left\|(K_{\PF})^t(\Phi(x),\cdot)-\pi_{\PF}\right\|_{\TV},
 \label{eq:PF-BE-distance}\\
 \overline d(t)&=d_{\IPF}(t).
 \label{eq:lumped-PF-BE-distance}
\end{align}
The labeled kernels, and likewise the lumped kernels, have the same nonzero eigenvalues with multiplicity.
\end{theorem}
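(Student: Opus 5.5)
The plan is to prove the tensor factorization \eqref{eq:tensor} first and then derive every other assertion from it.

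\emph{Stationary law.} Fix $u,v\in\PF_n$ and $c,d\in H$. By Lemma~\ref{lem:BE-shift}, $\pi_{\BE}(c\cdot u)=\pi_{\BE}(u)$. Lemma~\ref{lem:PF-vs-BE} gives $\pi_{\BE}(u)=\pi_{\PF}(u)/(n+1)$. Hence $\pi_{\BE}\circ\Psi=\pi_{\PF}\otimes\Unif(H)$.

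\emph{Kernel.} The same two lemmas give
\[
K_{\BE}(c\cdot u,d\cdot v)=K_{\BE}(u,v)=\frac{K_{\PF}(u,v)}{n+1}=K_{\PF}(u,v)\,U_H(c,d),
\]
which is exactly $K_{\BE}=K_{\PF}\otimes U_H$ under $\Psi$.

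\emph{$H$-orbit quotient.} Since $\Psi$ is a bijection, $[y]_H=\{d\cdot\Phi(y):d\in H\}$. Summing the factorized kernel over $d$ kills the $U_H$ factor, so
\[
\overline K_{\BE,H}([x]_H,[y]_H)=K_{\PF}(\Phi(x),\Phi(y)).
\]
Summing the stationary factorization over the orbit similarly gives $\overline\pi_{\BE,H}([y]_H)=\pi_{\PF}(\Phi(y))$.

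\emph{Labeled distance \eqref{eq:PF-BE-distance}.} Since $U_H^t=U_H$ for $t\ge1$, we have $(K_{PF}\otimes U_H)^t=K_{\PF}^t\otimes U_H$. Write $x=c\cdot\Phi(x)$. Then $K_{\BE}^t(x,\cdot)=K_{\PF}^t(\Phi(x),\cdot)\otimes\Unif(H)$, because every row of $U_H$ is uniform. The common-factor identity \eqref{eq:TV-common-factor} then yields equality of the total variation distances.

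\emph{Lumped distance \eqref{eq:lumped-PF-BE-distance}.} This is the step needing care. The two actions commute: $\sigma(c\cdot x)=c\cdot(\sigma x)$. So the joint $S_n\times H$-orbits of $[n+1]^n$ correspond, via $\Psi$, to $\IPF_n\times H$ modulo $H$, that is, to $\IPF_n$. On the lumped level I would show a factorization of the same type. Use $\Comp_{n,n+1}\cong\IPF_n\times H$, sending $(\bar u,c)$ to the histogram of $c\cdot\bar u$, which is the cyclic shift of the histogram. The map is bijective because $\Psi$ is $S_n$-equivariant. Summing the labeled factorization over $S_n$-orbits then gives
\[
\overline K_{\BE}=\overline K_{\PF}\otimes U_H,\qquad \overline\pi_{\BE}=\overline\pi_{\PF}\otimes\Unif(H).
\]
The first identity uses Corollary~\ref{BPLumpedTrans}: summing $K_{\PF}(\Phi(x),\cdot)$ over the $S_n$-orbit of $v$ gives $\overline K_{\PF}$. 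The same common-factor argument then gives, for each start $(\bar u,c)$,
\[
\|\overline K_{\BE}^t((\bar u,c),\cdot)-\overline\pi_{\BE}\|_{\TV}=\|\overline K_{\PF}^t(\bar u,\cdot)-\overline\pi_{\PF}\|_{\TV}.
\]
Taking the maximum over all starts proves $\overline d(t)=d_{\IPF}(t)$.

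\emph{Spectra.} The spectrum of a tensor product is the set of products of eigenvalues. The spectrum of $U_H$ is $\{1,0,\dots,0\}$. Hence $K_{PF}\otimes U_H$ has the eigenvalues of $K_{\PF}$ together with extra zeros, and likewise in the lumped case. Therefore the nonzero eigenvalues, with multiplicity, coincide.

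The main obstacle is the lumped bookkeeping in the last distance step. One must check that the identification $\Comp_{n,n+1}\cong\IPF_n\times H$ really transports $\overline K_{\BE}$ into the product form. That reduces to the commuting of the $S_n$- and $H$-actions together with Lemma~\ref{lem:BE-shift}.
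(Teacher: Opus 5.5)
Your proposal is correct and follows essentially the same route as the paper: you prove the tensor factorization \eqref{eq:tensor} from Lemmas~\ref{lem:BE-shift} and~\ref{lem:PF-vs-BE}, sum out the $H$-coordinate to get the quotient identities, use $U_H^t=U_H$ together with the common-factor identity for the distances, and read off the spectra from the tensor-product rule. Your explicit identification $\Comp_{n,n+1}\cong\IPF_n\times H$ via cyclic shifts of histograms spells out what the paper compresses into one line, namely that the $S_n$-equivariance of $\Psi$ lets \eqref{eq:tensor} descend to the coordinate-permutation orbits.
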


\begin{proof}
For $u,v\in\PF_n$ and $c,d\in H$, Lemmas~\ref{lem:BE-shift} and \ref{lem:PF-vs-BE} give
\[
 K_{\BE}(c\cdot u,d\cdot v)
 =K_{\BE}(u,v)
 =\frac1{n+1}K_{\PF}(u,v),
\]
and $\pi_{\BE}(c\cdot u)=\pi_{\PF}(u)/(n+1)$. This proves \eqref{eq:tensor}.

Let $u:=\Phi(x)$ and $v:=\Phi(y)$. Summing the two tensor identities over the $H$-coordinate gives
\[
 \overline K_{\BE,H}([x]_H,[y]_H)=K_{\PF}(u,v),
 \qquad
 \overline\pi_{\BE,H}([y]_H)=\pi_{\PF}(v).
\]
Since $U_H^t=U_H$ for $t\ge1$, \eqref{eq:tensor} gives
\[
 (K_{\BE})^t=(K_{\PF})^t\otimes U_H.
\]
The common-factor identity \eqref{eq:TV-common-factor} gives \eqref{eq:PF-BE-distance}. Since $U_H$ has eigenvalues $1,0,\ldots,0$, the tensor-product rule for eigenvalues proves the labeled spectral assertion. Finally, $\Psi$ is $S_n$-equivariant, so \eqref{eq:tensor} descends to the coordinate-permutation orbits; the same argument proves \eqref{eq:lumped-PF-BE-distance} and the lumped spectral assertion.
\end{proof}

\section{Mixing and Spectral Bounds}
\label{SectionMixingSpectral}

\subsection{Dirichlet Product Representation}

By Remark~\ref{rem:cut-and-paste}, the twisted Bose--Einstein Markov chain is equivalent to Crane's symmetric Dirichlet cut-and-paste process. Accordingly, let $S$ be a random row-stochastic $k\times k$ matrix with independent Dirichlet distributed rows,
\[
 S_{a,\bullet}\sim
 \Dirichlet\left(\frac\theta k,\ldots,\frac\theta k\right).
\]
For a deterministic row-stochastic $k\times k$ matrix $A$ and $x,y\in[k]^n$, define the {\em auxiliary kernel}
\[
 Q_A(x,y):=\prod_{r=1}^{n}A_{x_ry_r}.
\]
This kernel has the following probabilistic interpretation. Given $A$ and a current word $X=x$, generate the coordinates of the next word $Y$ independently according to
\[
 \mathbb P(Y_r=b\mid X=x,A)=A_{x_rb}.
\]
Consequently,
\begin{align*}
 \mathbb P(Y=y\mid X=x,A)
 =\prod_{r=1}^n\mathbb P(Y_r=y_r\mid X=x,A) =\prod_{r=1}^n A_{x_ry_r}
 =Q_A(x,y).
\end{align*}
Thus $Q_A$ is the transition kernel for the coordinatewise update governed by $A$.

\begin{lemma}\label{lem:Q-AB}
If $A$ and $B$ are deterministic row-stochastic $k\times k$ matrices, then
\[
 Q_AQ_B=Q_{AB}.
\]
\end{lemma}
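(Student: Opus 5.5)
The plan is a direct entrywise computation with matrix products: write out $(Q_AQ_B)(x,z)$ as a sum over intermediate words and use that both kernels factor over coordinates. Fix $x,z\in[k]^n$. By definition of matrix multiplication on the state space $[k]^n$,
\[
 (Q_AQ_B)(x,z)=\sum_{y\in[k]^n}Q_A(x,y)Q_B(y,z)
 =\sum_{y\in[k]^n}\prod_{r=1}^n A_{x_ry_r}B_{y_rz_r}.
\]

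The key step is to interchange the sum over words with the product over coordinates. The sum over $y=(y_1,\ldots,y_n)\in[k]^n$ is a sum over independent choices $y_r\in[k]$, one for each $r$. The $r$th factor $A_{x_ry_r}B_{y_rz_r}$ depends on $y$ only through $y_r$. Hence, by the distributive law,
\[
 \sum_{y\in[k]^n}\prod_{r=1}^n A_{x_ry_r}B_{y_rz_r}
 =\prod_{r=1}^n\sum_{c=1}^k A_{x_rc}B_{cz_r}
 =\prod_{r=1}^n (AB)_{x_rz_r}
 =Q_{AB}(x,z).
\]
This is legitimate for any $k\times k$ matrices. Row-stochasticity is used only to note that $AB$ is again row-stochastic, so that $Q_{AB}$ is a bona fide transition kernel of the same form.

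Equivalently, $Q_A=A^{\otimes n}$ under the identification of $\mathbb R^{[k]^n}$ with $(\mathbb R^k)^{\otimes n}$. The lemma is then the mixed-product property $A^{\otimes n}B^{\otimes n}=(AB)^{\otimes n}$. Probabilistically, it says that two independent coordinatewise updates compose to a coordinatewise update governed by $AB$.

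There is no real obstacle here. The only point needing care is the interchange of sum and product, which is justified by the product structure of $[k]^n$. I would also note that this identity will later be applied with random $A$, $B$ independent of each other, which is what makes the Dirichlet product representation $K^t=\mathbb E\,Q_{S_1\cdots S_t}$ work.
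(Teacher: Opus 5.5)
Your proposal is correct and is essentially the paper's proof: expand $(Q_AQ_B)(x,z)$ as a sum over intermediate words, then use the distributive law to factor it into $\prod_{r=1}^n\sum_{c=1}^k A_{x_rc}B_{cz_r}=\prod_{r=1}^n(AB)_{x_rz_r}$. You also correctly observe that row-stochasticity is not needed for the identity itself, only to ensure that $Q_{AB}$ is again a transition kernel.
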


\begin{proof}
By direct computation,
\begin{align*}
 (Q_AQ_B)(x,y)
 &=
 \sum_{z_1=1}^{k}\cdots\sum_{z_n=1}^{k}
 \prod_{r=1}^{n}A_{x_rz_r}B_{z_ry_r} =
 \prod_{r=1}^{n}
 \left(\sum_{c=1}^{k}A_{x_rc}B_{cy_r}\right)
 =Q_{AB}(x,y). \qedhere
\end{align*}
\end{proof}

The next proposition gives an equivalent construction of the chain. At each step, sample an independent stochastic matrix with the Dirichlet distributed rows as above, and conditional on that matrix, update the coordinates independently according to its coordinate-product kernel. Thus, one step averages $Q_S$ over $S$ and successive steps multiply the sampled matrices.

\begin{proposition}[Dirichlet product representation]
\label{prop:finite-dirichlet}
Let $K$ be the transition kernel of the twisted Bose--Einstein Markov chain on $[k]^n$. 
Let $S$ be a random row-stochastic $k\times k$ matrix with independent Dirichlet distributed rows, such that $S_{a,\bullet}\sim
 \Dirichlet\left(\frac\theta k,\ldots,\frac\theta k\right)$ for all $1 \leq a \leq k$. Let $S_1,S_2,\ldots$ be independent and identically distributed copies of $S$. Let
\[
 R_t:=S_1\cdots S_t \quad \text{and} 
 \quad R_0:=I_k.
\]
Then
\begin{equation}\label{eq:mixture}
 K=\mathbb E[Q_S],
\end{equation}
and
\begin{equation}\label{eq:matrix-product}
 K^t=\mathbb E[Q_{R_t}]
 \qquad(t\ge0).
\end{equation}
Equivalently, if $(X_t)_{t\ge0}$ is the twisted Bose--Einstein chain, then the chain can be realized so that
\begin{equation}\label{eq:conditional-product}
 \mathcal L(X_t\mid S_1,\ldots,S_t,X_0=x)
 =
 \bigotimes_{r=1}^{n}(e_{x_r}R_t).
\end{equation}
Here $\mathcal L(X)$ denotes the law of a random variable $X$.
\end{proposition}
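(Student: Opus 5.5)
The plan is to prove \eqref{eq:mixture} by a direct moment computation, then obtain \eqref{eq:matrix-product} from Lemma~\ref{lem:Q-AB} and independence, and finally read off \eqref{eq:conditional-product} from the coordinatewise interpretation of $Q_A$.

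\textbf{Step 1: the one-step identity.} Fix $x,y\in[k]^n$. Grouping the coordinates $r$ according to the pair $(x_r,y_r)=(a,b)$ gives
\[
 Q_S(x,y)=\prod_{r=1}^n S_{x_ry_r}=\prod_{a=1}^k\prod_{b=1}^k S_{a,b}^{m_{a,b}(x,y)}.
\]
The rows $S_{a,\bullet}$ are independent, so the expectation factors over $a$. For each $a$, apply the Dirichlet moment formula \eqref{eq:Dirichlet-moments} with $\gamma_b=\theta/k$ and $\gamma_0=\theta$. Here $r_b=m_{a,b}(x,y)$, so $r_0=\sum_b m_{a,b}(x,y)=i_a(x)$. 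This gives
\[
 \mathbb E[Q_S(x,y)]=\prod_{a=1}^k\frac{\prod_{b=1}^k(\theta/k)_{(m_{a,b}(x,y))}}{(\theta)_{(i_a(x))}},
\]
which is exactly \eqref{eq:BE-kernel}. By Proposition~\ref{prop:BE-kernel}, this proves $K=\mathbb E[Q_S]$.

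\textbf{Step 2: the $t$-step identity.} I will induct on $t$. The case $t=0$ holds because $Q_{I_k}$ is the identity kernel. For the inductive step, write
\[
 K^{t+1}=K^tK=\mathbb E[Q_{R_t}]\,\mathbb E[Q_{S_{t+1}}].
\]
Since $R_t$ depends only on $S_1,\ldots,S_t$, it is independent of $S_{t+1}$. Hence the product of expectations equals $\mathbb E[Q_{R_t}Q_{S_{t+1}}]$. Lemma~\ref{lem:Q-AB} turns this into $\mathbb E[Q_{R_tS_{t+1}}]=\mathbb E[Q_{R_{t+1}}]$. Exchanging the finite matrix product with the expectation is harmless, because all entries are bounded by $1$ and the sums are finite.

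\textbf{Step 3: the realization.} Define the chain by sampling $S_1,S_2,\ldots$ i.i.d.\ and, given $X_{t-1}$ and $S_t$, drawing $X_t$ from $Q_{S_t}(X_{t-1},\cdot)$. By Step~1, each transition averaged over $S_t$ is $K$, so $(X_t)$ is a Markov chain with kernel $K$. Conditional on $S_1,\ldots,S_t$, iterating and applying Lemma~\ref{lem:Q-AB} gives the law $Q_{R_t}(x,\cdot)$. This equals $\prod_r (R_t)_{x_r y_r}$, which is the product measure $\bigotimes_r e_{x_r}R_t$, and that is \eqref{eq:conditional-product}.

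The only real content is Step~1, namely matching the Dirichlet moments with the rising-factorial kernel. The main point to check is the bookkeeping that the exponents in row $a$ sum to $i_a(x)$, so that the normalizing factor $(\gamma_0)_{(r_0)}$ is $(\theta)_{(i_a(x))}$. Everything after that is formal.
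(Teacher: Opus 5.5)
Your proposal is correct and follows the paper's proof essentially step for step: the one-step identity $K=\mathbb E[Q_S]$ comes from row independence and the Dirichlet moment formula, with the row-$a$ exponents summing to $i_a(x)$; the $t$-step identity comes from Lemma~\ref{lem:Q-AB} and independence of $S_1,\ldots,S_t$; and the conditional law is read off from $Q_{R_t}(x,y)=\prod_r (e_{x_r}R_t)_{y_r}$. Your explicit induction on $t$ and your explicit construction of the realization (drawing $X_t$ from $Q_{S_t}(X_{t-1},\cdot)$ with a fresh $S_t$) only spell out details that the paper leaves implicit.
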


\begin{proof}
For each row $a$ of $S$, the Dirichlet moment formula \eqref{eq:Dirichlet-moments} gives
\[
 \mathbb E \left[\prod_{b=1}^{k}S_{ab}^{m_{a,b}(x,y)} \right]
 =
 \frac{\displaystyle\prod_{b=1}^{k}
 (\theta/k)_{(m_{a,b}(x,y))}}
 {(\theta)_{(i_a(x))}}.
\]
Using independence of the rows and \eqref{eq:BE-kernel},
\begin{align*}
 \mathbb E [Q_S(x,y)]
 =
 \mathbb E\left[\prod_{a,b=1}^{k}
 S_{ab}^{m_{a,b}(x,y)}\right]
 =
 \prod_{a=1}^{k}
 \frac{\displaystyle\prod_{b=1}^{k}
 (\theta/k)_{(m_{a,b}(x,y))}}
 {(\theta)_{(i_a(x))}}
 =K(x,y).
\end{align*}
This proves \eqref{eq:mixture}. Lemma~\ref{lem:Q-AB} and independence of $S_1,\ldots,S_t$ imply \eqref{eq:matrix-product}. Finally,
\[
 Q_{R_t}(x,y)
 =\prod_{r=1}^{n}(R_t)_{x_ry_r}
 =\prod_{r=1}^{n}(e_{x_r}R_t)_{y_r}
 =\left(\bigotimes_{r=1}^{n}e_{x_r}R_t\right)(y),
\]
proving (\ref{eq:conditional-product}).
\end{proof}

\begin{remark}[Relation to the binary Bose--Einstein Markov chain]\label{rem:dlr}
For $\theta=1$ and $k=2$, the identity
\[
 \left(\frac12\right)_{(m)}=\frac{(2m)!}{4^m m!}
\]
shows that \eqref{eq:BE-kernel} recovers Diaconis--Lin--Ram's formula in Proposition~3.1 of \cite{DLR25a+}; the row-and-column relabeling symmetry recovers their Corollary~3.2.
Moreover, let $K_J$ denote the same twisted chain restricted to the coordinates in a nonempty set $J\subseteq[n]$. Then, for every $v\in[k]^J$ and $t\ge1$,
\[
 \sum_{y:\,y_J=v}K^t(x,y)=K_J^t(x_J,v).
\]
Indeed, summing \eqref{eq:matrix-product} over each omitted coordinate contributes $\sum_c(R_t)_{x_rc}=1$. At $\theta=1$, this recovers Proposition~3.3 of \cite{DLR25a+}; its one-coordinate algebraic form is Proposition~6.1 of \cite{DLR25b+}.
\end{remark}

For the remainder of this section, set
\[
\rho:=\frac{k-1}{k(\theta+1)}.
\]

The following technical lemma shows that $R_t$ contracts zero sum vectors in mean square. We will use it to prove the mixing time upper bound in the next subsection. 

\begin{lemma}[Mean-square contraction]\label{lem:contraction}
If $v\in\mathbb R^k$ satisfies $\sum_av_a=0$, then
\begin{equation}\label{eq:one-step-contraction}
 \mathbb E\|vS\|_2^2
 =\rho\|v\|_2^2.
\end{equation}
Consequently,
\begin{equation}\label{eq:contraction}
 \mathbb E\|vR_t\|_2^2
 =\rho^t\|v\|_2^2.
\end{equation}
\end{lemma}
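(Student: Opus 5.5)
We first prove the one-step identity \eqref{eq:one-step-contraction} by expanding the square and using the Dirichlet second moments. Write $\gamma:=\theta/k$, so each row $S_{a,\bullet}\sim\Dirichlet(\gamma,\ldots,\gamma)$ with total parameter $\theta$. By \eqref{eq:Dirichlet-moments},
\[
 \mathbb E[S_{ab}]=\frac1k,\qquad
 \mathbb E[S_{ab}^2]=\frac{\gamma(\gamma+1)}{\theta(\theta+1)},\qquad
 \mathbb E[S_{ab}S_{ac}]=\frac{\gamma^2}{\theta(\theta+1)}\quad(b\neq c).
\]
Distinct rows are independent. For $a\neq a'$ this gives $\mathbb E[S_{ab}S_{a'b}]=1/k^2$.

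Next we expand
\[
 \|vS\|_2^2=\sum_{b}\Bigl(\sum_a v_aS_{ab}\Bigr)^2=\sum_{a,a'}v_av_{a'}\sum_b S_{ab}S_{a'b}.
\]
For $a\neq a'$, the expectation of $\sum_b S_{ab}S_{a'b}$ is $k\cdot k^{-2}=1/k$. For $a=a'$, it is $\sum_b\mathbb E S_{ab}^2=k\gamma(\gamma+1)/(\theta(\theta+1))=(\theta/k+1)/(\theta+1)$. Hence
\[
 \mathbb E\|vS\|_2^2=\Bigl(\frac{\theta/k+1}{\theta+1}-\frac1k\Bigr)\|v\|_2^2+\frac1k\Bigl(\sum_a v_a\Bigr)^2 .
\]
The last term vanishes because $\sum_a v_a=0$. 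The coefficient simplifies to $\frac{k-1}{k(\theta+1)}=\rho$, which proves \eqref{eq:one-step-contraction}. This algebraic simplification is the only point requiring care.

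For \eqref{eq:contraction} we induct on $t$. The case $t=0$ is trivial since $R_0=I_k$. For $t\ge1$ write $vR_t=(vR_{t-1})S_t$. Since $S_{t-1}$ is row-stochastic, $R_{t-1}\mathbf 1=\mathbf 1$, so the random vector $w:=vR_{t-1}$ still has coordinate sum $v\mathbf 1=0$. Moreover, $w$ is independent of $S_t$. Conditioning on $S_1,\ldots,S_{t-1}$ and applying the one-step identity with $w$ in place of $v$ gives $\mathbb E[\|wS_t\|_2^2\mid S_1,\ldots,S_{t-1}]=\rho\|w\|_2^2$. Taking expectations and using the induction hypothesis yields $\mathbb E\|vR_t\|_2^2=\rho\cdot\rho^{t-1}\|v\|_2^2$.
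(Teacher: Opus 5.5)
Your proposal is correct and follows essentially the same route as the paper. You use the Dirichlet second moments to get $\mathbb E\langle S_{a,\bullet},S_{c,\bullet}\rangle=\frac1k+\rho\mathbf 1_{\{a=c\}}$ (split into diagonal and off-diagonal cases), expand $\|vS\|_2^2$ so that the $\frac1k(\sum_a v_a)^2$ term vanishes, and then iterate by conditioning on $S_1,\ldots,S_{t-1}$, using that $vR_{t-1}$ is zero-sum and independent of $S_t$. One small wording fix: $R_{t-1}\mathbf 1=\mathbf 1$ holds because every $S_i$ is row-stochastic, not only $S_{t-1}$.
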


\begin{proof}
The Dirichlet moments used in Proposition~\ref{prop:finite-dirichlet} give
\[
 \mathbb ES_{ab}=\frac1k,
 \qquad
 \mathbb ES_{ab}^2=\frac{\theta+k}{k^2(\theta+1)}.
\]
Since distinct rows are independent,
\begin{equation}\label{eq:row-inner-product}
 \mathbb E\langle S_{a,\bullet},S_{c,\bullet}\rangle
 =\sum_{b=1}^k\mathbb E[S_{ab}S_{cb}]
 =
 \frac1k+\rho\mathbf1_{\{a=c\}}.
\end{equation}
Therefore,
\begin{align*}
 \mathbb E\|vS\|_2^2
 =
 \sum_{a,c=1}^k v_av_c
 \mathbb E\langle S_{a,\bullet},S_{c,\bullet}\rangle =
 \frac1k\left(\sum_{a=1}^kv_a\right)^2
 +\rho\sum_{a=1}^kv_a^2
 =\rho\|v\|_2^2.
\end{align*}
This proves \eqref{eq:one-step-contraction}. Multiplication by a row-stochastic matrix preserves the coordinate sum, so $vR_{t-1}$ has coordinate sum zero. Since $S_t$ is independent of $S_1,\ldots,S_{t-1}$, \eqref{eq:one-step-contraction} gives
\begin{align*}
 \mathbb E\!\left[\|vR_t\|_2^2\mid S_1,\ldots,S_{t-1}\right]
 &=\mathbb E\!\left[\|(vR_{t-1})S_t\|_2^2\mid S_1,\ldots,S_{t-1}\right]\\
 &=\rho\|vR_{t-1}\|_2^2.
\end{align*}
Taking expectations and iterating from $R_0=I_k$ yields
\[
 \mathbb E\|vR_t\|_2^2=\rho^t\|v\|_2^2. \qedhere
\]
\end{proof}

\subsection{Collision Eigenfunctions and the Second Largest Eigenvalue}

We call $x_r=x_s$, meaning that coordinates $r$ and $s$ have the same label, a collision.

For distinct $r,s\in[n]$, set
\[
 A_{r,s}:=\{x\in[k]^n:x_r=x_s\},
 \qquad
 H_{r,s}:=\mathbf1_{A_{r,s}},
 \qquad
 c:=\frac{\theta+1}{k\theta+1},
 \qquad
 F_{r,s}:=H_{r,s}-c.
\]
For $n\ge2$ and $\bm{i}\in\Comp_{n,k}$, also set
\[
 J(\bm{i}):=\binom{n}{2}^{-1}\sum_{a=1}^k\binom{i_a}{2}.
\]

\begin{proposition}
\label{prop:collision-eigenfunction}
Assume $n,k\ge2$. 
Let $K$ and $\pi$ be the transition kernel and stationary distribution of the twisted Bose--Einstein Markov chain on $[k]^n$. 
Then
\begin{equation}\label{eq:collision-eigenfunction}
 K F_{r,s}=\rho F_{r,s},
\end{equation}
and, for every $x\in[k]^n$ and $t\ge0$,
\begin{equation}\label{eq:exact-two-coordinate-correlation}
 \mathbb P_x(X_t(r)=X_t(s))
 =c+\rho^tF_{r,s}(x).
\end{equation}
Moreover,
\[
 \pi(x_r=x_s)=c,
\]
and
\begin{equation}\label{eq:pair-state-lower}
 \left\|K^t(x,\cdot)-\pi\right\|_{\TV}
 \ge
 \rho^t
 \left|H_{r,s}(x)-c\right|.
\end{equation}
If $x_{\mathrm{const}}$ is any constant word, then
\begin{equation}\label{eq:constant-word-lower}
 \left\|K^t(x_{\mathrm{const}},\cdot)-\pi\right\|_{\TV}
 \ge
 \frac{\theta(k-1)}{k\theta+1}\rho^t.
\end{equation}
Consequently,
\begin{equation}\label{eq:collision-lower}
 d(t)
 \ge
 \frac{\max\{\theta+1,\theta(k-1)\}}
 {k\theta+1}\rho^t.
\end{equation}
For the lumped chain,
\begin{equation}\label{eq:lumped-collision-eigenfunction}
 \overline K(J-c)=\rho(J-c),
\end{equation}
and
\begin{equation}\label{eq:lumped-collision-lower}
 \overline d(t)
 \ge
 \frac{\theta(k-1)}{k\theta+1}\rho^t.
\end{equation}
\end{proposition}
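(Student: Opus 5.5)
The plan is to compute $KH_{r,s}$ directly from the Dirichlet mixture representation of Proposition~\ref{prop:finite-dirichlet}. By \eqref{eq:mixture} and \eqref{eq:conditional-product}, starting from $x$, one step produces coordinates $Y_r,Y_s$ that are conditionally independent given $S$, with laws $e_{x_r}S$ and $e_{x_s}S$. Hence
\[
(KH_{r,s})(x)=\mathbb E\langle S_{x_r,\bullet},S_{x_s,\bullet}\rangle=\frac1k+\rho\mathbf 1_{\{x_r=x_s\}},
\]
by \eqref{eq:row-inner-product}. So $KH_{r,s}=\frac1k+\rho H_{r,s}$.

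Next, $c$ is the fixed point of the affine map $h\mapsto\frac1k+\rho h$. Indeed, $1-\rho=\frac{k\theta+1}{k(\theta+1)}$, so $\frac{1/k}{1-\rho}=\frac{\theta+1}{k\theta+1}=c$. This gives \eqref{eq:collision-eigenfunction}. Iterating yields $K^tF_{r,s}=\rho^tF_{r,s}$, which is \eqref{eq:exact-two-coordinate-correlation}. Since $\rho<1$, letting $t\to\infty$ and using convergence to $\pi$ gives $\pi(A_{r,s})=c$. Alternatively, $\pi(F_{r,s})=\pi(KF_{r,s})=\rho\,\pi(F_{r,s})$ forces $\pi(F_{r,s})=0$.

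For the total-variation bounds, use the event $A_{r,s}$:
\[
\|K^t(x,\cdot)-\pi\|_{\TV}\ge|K^t(x,A_{r,s})-\pi(A_{r,s})|=\rho^t|F_{r,s}(x)|,
\]
which is \eqref{eq:pair-state-lower}. For a constant word, $H_{r,s}=1$ and $1-c=\frac{\theta(k-1)}{k\theta+1}$, giving \eqref{eq:constant-word-lower}. For \eqref{eq:collision-lower}, also take a word with $x_r\ne x_s$, which exists since $k\ge2$; there $|0-c|=\frac{\theta+1}{k\theta+1}$. Taking the maximum of the two cases gives the bound.

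For the lumped chain, set $\bar F:=\binom n2^{-1}\sum_{r<s}F_{r,s}$. Then $\bar F(x)=J(\hist(x))-c$, since $\sum_{r<s}H_{r,s}(x)=\sum_a\binom{i_a(x)}2$. This function is a function of the histogram and, by linearity, is a $\rho$-eigenfunction of $K$. The intertwining \eqref{eq:histogram-intertwining} with $h=J-c$ then gives $((\overline K(J-c))\circ\hist)=K\bar F=\rho\,\bar F$, and $\hist$ is surjective, so \eqref{eq:lumped-collision-eigenfunction} follows.

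For \eqref{eq:lumped-collision-lower}, start at $\bm i=(n,0,\ldots,0)$, where $J=1$. Then
\[
\overline K^tJ(\bm i)-\overline\pi(J)=\rho^t(1-c).
\]
Since $0\le J\le1$, the standard bound $|\mu f-\nu f|\le\|\mu-\nu\|_{\TV}$ for $f$ taking values in $[0,1]$ gives the claim, using $\overline\pi(J)=\pi(\bar F+c)=c$.

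There is no serious obstacle. The only care needed is the conditional-independence step, which relies on $r\neq s$ in \eqref{eq:conditional-product}, and the algebra identifying $c$ as the fixed point.
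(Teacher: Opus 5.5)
Your proposal is correct and follows essentially the same route as the paper: the one-step identity $KH_{r,s}=\frac1k+\rho H_{r,s}$ from the Dirichlet row inner products, identifying $c$ as the fixed point, using $A_{r,s}$ as the test event, and pushing the averaged collision function $J-c$ through the histogram intertwining. The only differences are minor. You spell out the surjectivity of $\hist$ and the $[0,1]$-valued test-function bound, which the paper uses implicitly, and you add an optional convergence argument for $\pi(A_{r,s})=c$.
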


\begin{proof}
Conditionally on $S$, the two coordinates move independently. Thus
\begin{align*}
 (K H_{r,s})(x)
 &=\mathbb P_x(X_1(r)=X_1(s))\\
 &=\mathbb E\langle S_{x_r,\bullet},S_{x_s,\bullet}\rangle
 =\frac1k+\rho H_{r,s}(x),
\end{align*}
where the last equality is \eqref{eq:row-inner-product}. Since
\[
 \frac{1/k}{1-\rho}
 =\frac{\theta+1}{k\theta+1}
 =c,
\]
and the kernel preserves constants, this proves \eqref{eq:collision-eigenfunction}. Iterating the eigenfunction identity and taking expectations gives \eqref{eq:exact-two-coordinate-correlation}.

Stationarity and \eqref{eq:collision-eigenfunction} give
\[
 \pi(F_{r,s})
 =\rho\pi(F_{r,s}).
\]
Since $\rho<1$, this expectation is zero, proving the stated stationary collision probability. Apply the event characterization of total variation to the event $A_{r,s}$. By \eqref{eq:exact-two-coordinate-correlation},
\begin{align*}
 \left\|K^t(x,\cdot)-\pi\right\|_{\TV}
 &\ge\left|K^t(x,A_{r,s})-\pi(A_{r,s})\right|\\
 &=\left|\mathbb P_x(X_t(r)=X_t(s))-c\right|
 =\rho^t|H_{r,s}(x)-c|.
\end{align*}
This proves \eqref{eq:pair-state-lower}. When $x_r=x_s$, its coefficient is
\[
 1-c=\frac{\theta(k-1)}{k\theta+1};
\]
when $x_r\ne x_s$, its coefficient is
\[
 c=\frac{\theta+1}{k\theta+1}.
\]
The first case gives \eqref{eq:constant-word-lower}. Both types of initial state exist because $n,k\ge2$, so maximizing over $x$ gives
\[
 d(t)\ge
 \frac{\max\{\theta+1,\theta(k-1)\}}{k\theta+1}\rho^t.
\]
Finally,
\begin{align*}
\bigl((J-c)\circ\hist\bigr)(x)
 &=J(\hist(x))-c =\binom{n}{2}^{-1}\sum_{1\le r<s\le n}H_{r,s}(x)-c\\
 &=\binom{n}{2}^{-1}\sum_{1\le r<s\le n}H_{r,s}(x)
 -\binom{n}{2}^{-1}\sum_{1\le r<s\le n}c =\binom{n}{2}^{-1}\sum_{1\le r<s\le n}F_{r,s}(x).
\end{align*}
Thus \eqref{eq:collision-eigenfunction} and \eqref{eq:histogram-intertwining} give \eqref{eq:lumped-collision-eigenfunction}. Since $\overline\pi(J)=c$ and $J(n,0,\ldots,0)=1$,
\[
 \overline d(t)
 \ge
 \left|(\overline K^tJ)(n,0,\ldots,0)-\overline\pi(J)\right|
 =(1-c)\rho^t
 =\frac{\theta(k-1)}{k\theta+1}\rho^t. \qedhere
\]
\end{proof}

\begin{remark}[Two-state collision factor]
For fixed distinct $r,s\in[n]$, the partition
\[
 \{A_{r,s},A_{r,s}^c\}
\]
is strongly lumpable. Indeed, if
\[
 Y_t:=\mathbf1_{\{X_t(r)=X_t(s)\}},
\]
then
\[
 \mathbb P_x(Y_1=1)
 =
 \frac1k+\rho H_{r,s}(x)
\]
depends on $x$ only through $Y_0=H_{r,s}(x)$. With the states ordered as ``equal'' and ``different,'' the resulting two-state transition matrix is
\[
 \widehat K=
 \begin{pmatrix}
  \frac1k+\rho & 1-\frac1k-\rho\\[3pt]
  \frac1k & 1-\frac1k
 \end{pmatrix}.
\]
Its eigenvalues are $1$ and $\rho$. Its stationary distribution is $(c,1-c)$, since
\[
 c=\frac1k+\rho c
 \qquad\text{and hence}\qquad
 c=\frac{\theta+1}{k\theta+1}.
\]
The function $f(y):=y-c$ is centered in the sense that it has mean zero under this stationary distribution. Moreover,
\[
 (\widehat Kf)(y)
 =\mathbb E[Y_{t+1}-c\mid Y_t=y]
 =\frac1k+\rho y-c
 =\rho(y-c)
 =\rho f(y),
\]
so $f$ is an eigenfunction with eigenvalue $\rho$. Lifting it to $[k]^n$ gives
\[
 f(H_{r,s}(x))=H_{r,s}(x)-c=F_{r,s}(x).
\]
Thus the collision eigenfunction arises from an exact two-state Markov factor of the full chain.
\end{remark}

The next theorem uses a trace argument to identify the second-largest eigenvalue.

\begin{theorem}
\label{thm:exact-second-eigenvalue}
Assume $n,k\ge2$. 
Let $K$ be the transition kernel of the twisted Bose--Einstein Markov chain on $[k]^n$. 
Then
\begin{equation}
 \lambda_2(K)=\lambda_2(\overline K)=\rho.
 \label{eq:exact-second-eigenvalue}
\end{equation}
\end{theorem}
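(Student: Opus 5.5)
The plan is to squeeze $\lambda_2(K)$ between matching bounds. The lower bound comes from the collision eigenfunctions. The upper bound comes from a block-triangular decomposition of $K$ inherited from the Dirichlet product representation. For the lower bound, Proposition~\ref{prop:collision-eigenfunction} gives $KF_{r,s}=\rho F_{r,s}$ with $F_{r,s}$ nonconstant, because $n,k\ge2$ provide states with and without a collision. Likewise $\overline K(J-c)=\rho(J-c)$ with $J-c$ nonconstant. Hence $\rho$ is an eigenvalue of both $K$ and $\overline K$. By \eqref{eq:histogram-intertwining}, every eigenvalue of $\overline K$ lifts to an eigenvalue of $K$, so $\lambda_2(\overline K)\le\lambda_2(K)$. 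It therefore suffices to show that every eigenvalue of $K$ other than the (simple) eigenvalue $1$ has modulus at most $\rho$.

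For the upper bound I would work with the transpose action on row vectors. By \eqref{eq:mixture}, acting on measures $\mu$ on $[k]^n$ identified with $(\R^k)^{\otimes n}$, the operator $\mu\mapsto\mu K$ is $\mathbb E[S^{\otimes n}]$. Let $Z=\{v:\sum_a v_a=0\}$. Because $S$ is row-stochastic, $ZS\subseteq Z$, and $S$ acts as the identity on $\R^k/Z$. Let $\mathcal F_j$ be the span of tensors having at least $j$ factors in $Z$. This is an $\mathbb E[S^{\otimes n}]$-invariant filtration, and its graded pieces are
\[
\mathcal F_j/\mathcal F_{j+1}\cong\bigoplus_{|J|=j} Z^{\otimes J}\otimes(\R^k/Z)^{\otimes J^c}.
\]
On the summand indexed by $J$ the operator acts as $T_j:=\mathbb E[S^{\otimes j}]$ restricted to $Z^{\otimes j}$. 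Hence the spectrum of $K$, with multiplicity, is the union of the spectra of these blocks. The block $j=0$ is one-dimensional with eigenvalue $1$, which accounts for the simple top eigenvalue. For $j=1$ the block is $\mathbb E[S]=\frac1k\mathbf 1\mathbf 1^{\top}$ restricted to $Z$, which vanishes.

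For $j\ge2$, let $w\in Z^{\otimes j}\otimes\mathbb C$ be an eigenvector of $T_j$ with eigenvalue $\mu$. By \eqref{eq:matrix-product}, $\mu^t w=\mathbb E[wR_t^{\otimes j}]$. Write $w=\sum_i v_i\otimes u_i\otimes z_i$ with $v_i,u_i\in Z$, allowing complex coefficients, so that Lemma~\ref{lem:contraction} applies to real and imaginary parts. Pair with a fixed test tensor $\eta$ satisfying $\langle w,\eta\rangle\ne0$, and note that all entries of $R_t$ lie in $[0,1]$. Then $|\langle z_iR_t^{\otimes(j-2)},\eta'\rangle|$ is bounded by a constant depending only on $k,j,w,\eta$. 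Cauchy--Schwarz together with Lemma~\ref{lem:contraction} gives
\[
|\mu|^t|\langle w,\eta\rangle|\le C\sum_i\mathbb E\bigl[\|v_iR_t\|_2\,\|u_iR_t\|_2\bigr]\le C\sum_i\bigl(\mathbb E\|v_iR_t\|_2^2\,\mathbb E\|u_iR_t\|_2^2\bigr)^{1/2}=C'\rho^t.
\]
Taking $t$-th roots and letting $t\to\infty$ gives $|\mu|\le\rho$.

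The main obstacle is exactly this step. A single contracting factor only yields $\rho^{t/2}$, so I must be careful to use two $Z$-factors simultaneously. The argument must also keep the other factors bounded without an operator-norm bound on $R_t$; using entrywise bounds in $[0,1]$ rather than an $\ell^2$ operator norm handles this. Once this is done, since $K$ is reversible and positive semidefinite, all eigenvalues are real and lie in $[0,1]$. The eigenvalue $1$ has multiplicity one (from the block $j=0$), and every other eigenvalue lies in $[0,\rho]$. Combined with the first paragraph, this gives $\lambda_2(K)=\lambda_2(\overline K)=\rho$.
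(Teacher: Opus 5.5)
Your proposal is correct, and it takes a genuinely different route from the paper's.

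\textbf{The paper's argument.} It works only with traces. From $\operatorname{tr}(K^t)=\mathbb E(\operatorname{tr}R_t)^n$ and $\operatorname{tr}R_t=1+Z_t$, it bounds $\operatorname{tr}(K^t)-1=\sum_{r\ge2}\binom nr\mathbb EZ_t^r$ using three facts: $\mathbb EZ_t=0$, $|Z_t|\le k-1$, and $\mathbb EZ_t^2=\rho^t$ (the last read off from the rank-two kernel $K_2$ on $[k]^2$). Reversibility then takes it from $\sum_{\ell\ge2}\lambda_\ell^{2m}\le C\rho^{2m}$ to $|\lambda_\ell|\le\rho$. Simplicity of the eigenvalue $1$ comes from strict positivity of $K$.

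\textbf{How the two arguments relate.} $Z_t$ is exactly the trace of $R_t$ restricted to the zero-sum subspace $Z$. Hence $\mathbb EZ_t^r=\operatorname{tr}(T_r^t)$, and the paper's binomial expansion is precisely the trace of your block-triangular decomposition. In particular, $\mathbb EZ_t=0$ is the trace-level shadow of your $T_1=0$.

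\textbf{What your route buys.} You work with eigenvectors rather than traces. You bound the spectral radius of each block $T_j$, $j\ge2$, directly by applying Lemma~\ref{lem:contraction} to two zero-sum factors at once via Cauchy--Schwarz. This avoids computing $\operatorname{tr}(K_2^t)$. It does not need the spectrum to be real for the modulus bound. It also gives simplicity of the eigenvalue $1$ without appealing to positivity.

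The price is more linear-algebraic setup: the invariant filtration and the identification of its graded pieces. The payoff is structural information. The spectrum of $K$ is the multiset union of $\binom nj$ copies of $\operatorname{spec}(T_j)$ for $0\le j\le n$, with blocks independent of $n$. For instance, $0$ has multiplicity at least $n(k-1)$. Also, the case $n=2$ shows that $\rho$ is the only nonzero eigenvalue of $T_2$, so $\rho$ has multiplicity at least $\binom n2$ in $K$, matching the collision eigenfunctions $F_{r,s}$.

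\textbf{What the paper's route buys.} It is shorter and purely scalar. As a by-product it gives the quantitative bound $|\operatorname{tr}(K^t)-1|\le C\rho^t$.
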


\begin{proof}
The formula defining $Q_A$ makes sense for every $k\times k$ matrix $A$, and
\begin{align*}
 \operatorname{tr}(Q_A)
 &=\sum_{x\in[k]^n}Q_A(x,x)
 =\sum_{x\in[k]^n}\prod_{r=1}^n A_{x_rx_r}
 =\left(\sum_{a=1}^kA_{aa}\right)^n
 =(\operatorname{tr}A)^n.
\end{align*}
Consequently, \eqref{eq:matrix-product} gives
\begin{equation}
 \operatorname{tr}(K^t)
 =
 \mathbb E\operatorname{tr}(Q_{R_t})
 =
 \mathbb E(\operatorname{tr}R_t)^n.
 \label{eq:BE-trace-moment}
\end{equation}
Since each row of $S_i$ has mean $(1/k,\ldots,1/k)$,
\[
 \mathbb ES_i=\frac1kJ_k,
\]
where $J_k$ is the all-ones matrix. Independence therefore gives, for $t\ge1$,
\[
 \mathbb ER_t
 =(\mathbb ES_1)\cdots(\mathbb ES_t)
 =\left(\frac1kJ_k\right)^t
 =\frac1kJ_k.
\]
Hence
\begin{equation}
 \mathbb E\operatorname{tr}R_t
 =
 \operatorname{tr}\left(\frac1kJ_k\right)
 =1.
 \label{eq:BE-trace-mean}
\end{equation}
Define
\[
 Z_t:=\operatorname{tr}R_t-1.
\]
Then
\begin{equation}
 \mathbb EZ_t=0
 \qquad(t\ge1).
 \label{eq:BE-Z-mean}
\end{equation}
Moreover, $R_t$ is row-stochastic, so
\[
 0\le (R_t)_{aa}\le1
 \quad\Longrightarrow\quad
 0\le\operatorname{tr}R_t\le k.
\]
Since $k\ge2$, it follows that
\begin{equation}
 |Z_t|\le k-1.
 \label{eq:BE-Z-bound}
\end{equation}

We next use the two-coordinate chain. Let $K_2$ denote the same twisted Bose--Einstein kernel on $[k]^2$, with $k$ and $\theta$ unchanged. The law of $R_t$ depends on $k$ and $\theta$, but not on the word length $n$. Applying \eqref{eq:BE-trace-moment} with $n=2$ gives
\begin{equation}
 \mathbb E(\operatorname{tr}R_t)^2
 =
 \operatorname{tr}(K_2^t).
 \label{eq:second-trace-moment}
\end{equation}
Formula \eqref{eq:BE-kernel} gives
\[
 K_2(x,y)=
 \begin{cases}
  (\theta+k)/(k^2(\theta+1)),
  &x_1=x_2,\ y_1=y_2,\\
  \theta/(k^2(\theta+1)),
  &x_1=x_2,\ y_1\ne y_2,\\
  1/k^2,&x_1\ne x_2.
 \end{cases}
\]
The rows of $K_2$ depend only on whether $x_1=x_2$, so $\operatorname{rank}(K_2)\le2$. The constant function and Proposition~\ref{prop:collision-eigenfunction} give the distinct nonzero eigenvalues $1$ and $\rho$, so $\operatorname{rank}(K_2)\ge2$. Thus $K_2$ has rank two and these are its only nonzero eigenvalues. Consequently,
\begin{equation}
 \operatorname{tr}(K_2^t)
 =1+\rho^t
 \qquad(t\ge1).
 \label{eq:BE-two-coordinate-trace}
\end{equation}
Applying \eqref{eq:BE-trace-mean}, \eqref{eq:second-trace-moment}, and \eqref{eq:BE-two-coordinate-trace} yields
\begin{align}
 \mathbb EZ_t^2
 &=\mathbb E(\operatorname{tr}R_t-1)^2
 =\mathbb E(\operatorname{tr}R_t)^2
 -2\mathbb E\operatorname{tr}R_t+1 \notag\\
 &=\operatorname{tr}(K_2^t)-1
 =\rho^t
 \qquad(t\ge1).
 \label{eq:BE-Z-second-moment}
\end{align}

For $t\ge1$, \eqref{eq:BE-trace-moment}, \eqref{eq:BE-Z-mean}, \eqref{eq:BE-Z-bound}, and \eqref{eq:BE-Z-second-moment} give
\begin{align*}
 |\operatorname{tr}(K^t)-1|
 &=\left|\mathbb E(1+Z_t)^n-1\right|
 =\left|\sum_{r=2}^n\binom nr\mathbb EZ_t^r\right|\\
 &\le\sum_{r=2}^n\binom nr\mathbb E|Z_t|^r
 =
 \sum_{r=2}^n\binom nr
 \mathbb E\!\left[|Z_t|^{r-2}Z_t^2\right] \le\sum_{r=2}^n\binom nr(k-1)^{r-2}\mathbb EZ_t^2\\
 &=\rho^t\sum_{r=2}^n\binom nr(k-1)^{r-2}
 =C\rho^t,
\end{align*}
where
\[
 C:=\sum_{r=2}^n\binom nr(k-1)^{r-2}.
\]
The general twisted Burnside calculation in the introduction shows that the chain is reversible and has no negative eigenvalues. Moreover, \eqref{eq:BE-kernel} is strictly positive, so the eigenvalue $1$ is simple. Write $\lambda_j:=\lambda_j(K)$. For every $j\ge2$ and $m\ge1$,
\[
 |\lambda_j|^{2m}
 \le\sum_{\ell=2}^{k^n}\lambda_\ell^{2m}
 =\operatorname{tr}(K^{2m})-1
 \le C\rho^{2m}.
\]
Taking $(2m)$th roots and letting $m\to\infty$ yields
\[
 |\lambda_j|\le\rho
 \qquad(j\ge2).
\]
Finally, Proposition~\ref{prop:collision-eigenfunction} shows that $\rho$ is attained by $K$. If $\overline Kh=\lambda h$, then \eqref{eq:histogram-intertwining} gives $K(h\circ\hist)=\lambda(h\circ\hist)$, so every eigenvalue of $\overline K$ is an eigenvalue of $K$. On the other hand, $J(n,0,\ldots,0)=1\ne(n-2)/n=J(n-1,1,0,\ldots,0)$, so \eqref{eq:lumped-collision-eigenfunction} shows that $\rho$ is attained by $\overline K$. This proves \eqref{eq:exact-second-eigenvalue}.
\end{proof}

For $n\ge2$, since $c\in(0,1)$ and $H_{r,s}(x)\in\{0,1\}$, we have $F_{r,s}(x)\ne0$ for every $x\in[k]^n$. Hence \eqref{eq:pair-state-lower} and the standard reversible-chain spectral bound~\cite[Chapter~12]{LPW09} imply
\[
 \lim_{t\to\infty}
 \left\|K^t(x,\cdot)-\pi\right\|_{\TV}^{1/t}
 =\rho
 \qquad\text{for every }x\in[k]^n.
\]

In particular, the labeled and lumped chains have relaxation time $t_{\rel}(K)=t_{\rel}(\overline K)=k(\theta+1)/(k\theta+1)$; for the corresponding parking-function chains it equals $2(n+1)/(n+2)$ and converges to $2$. Thus, for fixed $\theta>0$, the relaxation time remains bounded even though, in the regimes above, worst-case total-variation mixing takes order $\log n$: the logarithmic scale is not caused by a closing spectral gap.

\begin{remark}[Relation to earlier spectral results]
For $k=2$, Diaconis--Zhong~\cite[Theorem~3]{DZ21} diagonalized the orbit-lumped twisted chain and obtained the first nontrivial eigenvalue $1/(2(\theta+1))=\rho$; they also proved matching constant-start total-variation decay for $\theta\ge1$ in \cite[Theorem~4]{DZ21}. For $\theta=1$, Diaconis--Lin--Ram~\cite[Theorem~1.2]{DLR25a+} later diagonalized the full binary chain. They also explain in \cite[Section~6.2]{DLR25a+} why a full diagonalization becomes substantially harder for $k>2$. Theorem~\ref{thm:exact-second-eigenvalue} identifies the second-largest eigenvalue of both the labeled and orbit-lumped chains for every $k\ge2$ and $\theta>0$.
\end{remark}

\subsection{Total-Variation Upper Bounds}

In what follows, let $K$ and $\overline{K}$ be the transition kernels of the twisted Bose--Einstein Markov chain and the lumped twisted Bose--Einstein chain. Let $d(t)$ and $t_{\mix}(\epsilon)$ (respectively, $\overline{d}(t)$ and $\overline{t}_{\mix}(\epsilon)$) be the distance function and the mixing time of $K$ (respectively, $\overline{K}$). 

\begin{theorem}[Coordinate-chain bound]
\label{thm:coordinate-upper}
Assume $k\ge2$. For every $t\ge0$,
\begin{equation}\label{eq:coordinate-upper}
 \overline d(t)
 \le d(t)
 \le
 \frac n2\sqrt{2k}\,\rho^{t/2}.
\end{equation}
Consequently, for $0<\varepsilon<1$,
\begin{equation}\label{eq:two-parameter-upper}
 \overline t_{\mix}(\varepsilon)
 \le t_{\mix}(\varepsilon)
 \le
 \left\lceil
 \frac{2\log\!\left(n\sqrt{k/2}/\varepsilon\right)}
 {\log\!\left(k(\theta+1)/(k-1)\right)}
 \right\rceil.
\end{equation}
\end{theorem}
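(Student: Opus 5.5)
The plan is to prove the upper bound in \eqref{eq:coordinate-upper} by combining the Dirichlet product representation (Proposition~\ref{prop:finite-dirichlet}) with the product total-variation bound \eqref{eq:TV-product} and the mean-square contraction of Lemma~\ref{lem:contraction}. The first inequality $\overline d(t)\le d(t)$ is immediate from \eqref{eq:histogram-TV-contraction}: every $\bm i\in\Comp_{n,k}$ equals $\hist(x)$ for some $x$, so one maximizes over $x$.

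For the main bound, I would fix $x\in[k]^n$ and compare $K^t(x,\cdot)$ with $\pi$ by introducing an independent copy. Since $\pi$ is stationary, $\pi=\sum_{z}\pi(z)K^t(z,\cdot)$. Better, I would use the random-matrix representation: $K^t(x,\cdot)=\mathbb E\bigotimes_r (e_{x_r}R_t)$. I also need a similar representation of $\pi$. The natural choice is that, as $t\to\infty$, $R_t$ converges to a rank-one matrix $\mathbf 1\,\mu$, with $\mu$ a random probability vector, so that $\pi=\mathbb E\bigotimes_r \mu$. A cleaner route avoids limits. Since $\pi K^t=\pi$, write $\pi=\sum_z\pi(z)\,\mathbb E\bigotimes_r(e_{z_r}R_t)$. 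Then, by convexity of total variation,
\[
\|K^t(x,\cdot)-\pi\|_{\TV}\le \sum_z\pi(z)\,\mathbb E\Bigl\|\bigotimes_r e_{x_r}R_t-\bigotimes_r e_{z_r}R_t\Bigr\|_{\TV}.
\]
This works because both terms use the same $R_t$ coupling. Applying \eqref{eq:TV-product} bounds the inner term by $\sum_{r=1}^n\tfrac12\|(e_{x_r}-e_{z_r})R_t\|_1$.

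Next I would pass from $\ell^1$ to $\ell^2$ via Cauchy--Schwarz, $\|w\|_1\le\sqrt k\,\|w\|_2$, and then use Jensen's inequality, $\mathbb E\|w\|_2\le(\mathbb E\|w\|_2^2)^{1/2}$. The vector $v=e_{x_r}-e_{z_r}$ has zero sum and $\|v\|_2^2\le 2$, so Lemma~\ref{lem:contraction} gives $\mathbb E\|vR_t\|_2\le\sqrt{2}\,\rho^{t/2}$. Each coordinate therefore contributes at most $\tfrac12\sqrt{2k}\,\rho^{t/2}$. Summing over the $n$ coordinates and averaging over $z$ gives $\frac n2\sqrt{2k}\,\rho^{t/2}$, and maximizing over $x$ yields \eqref{eq:coordinate-upper}.

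For \eqref{eq:two-parameter-upper}, I would solve $\frac n2\sqrt{2k}\,\rho^{t/2}\le\varepsilon$, that is, $t\ge 2\log(n\sqrt{k/2}/\varepsilon)/\log(1/\rho)$, using $1/\rho=k(\theta+1)/(k-1)>1$. The ceiling of this quantity suffices, and $\overline t_{\mix}\le t_{\mix}$ follows from $\overline d\le d$. The only delicate step is the coupling representation of $\pi$ through the same $R_t$. Stationarity handles it directly, so I expect no real obstacle; the rest consists of checking that constants match, in particular the factor $\tfrac12$ in total variation and $\|v\|_2^2=2$ when $x_r\neq z_r$ (it is $0$ otherwise).
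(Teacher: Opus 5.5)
Your proposal is correct and follows essentially the same route as the paper. Both use the shared-$R_t$ coupling from Proposition~\ref{prop:finite-dirichlet}, the product bound \eqref{eq:TV-product}, the $\ell^1$-to-$\ell^2$ passage with Jensen/Cauchy--Schwarz and Lemma~\ref{lem:contraction}, and then average over $\pi$ via stationarity and convexity (the paper bounds $\|K^t(x,\cdot)-K^t(y,\cdot)\|_{\TV}$ first and then averages, which is the same argument).
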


\begin{proof}
For a generic initial state $u\in[k]^n$, the conditional product representation \eqref{eq:conditional-product} gives
\[
 \mathcal L(X_t\mid R_t,X_0=u)
 =\bigotimes_{i=1}^{n}(e_{u_i}R_t).
\]
Therefore, for $x,y\in[k]^n$, convexity of total variation, the product-measure inequality \eqref{eq:TV-product}, $\|w\|_1\le\sqrt{k}\|w\|_2$, Cauchy--Schwarz, and Lemma~\ref{lem:contraction} give
\begin{align*}
 \left\|K^t(x,\cdot)-K^t(y,\cdot)\right\|_{\TV}
 &\le\sum_{i=1}^n
 \mathbb E\left\|e_{x_i}R_t-e_{y_i}R_t\right\|_{\TV}\\
 &\le\frac{\sqrt{k}}2\sum_{i=1}^n
 \sqrt{\mathbb E\left\|(e_{x_i}-e_{y_i})R_t\right\|_2^2}\\
 &=\frac12\sum_{i=1}^n
 \sqrt{k\rho^t\left\|e_{x_i}-e_{y_i}\right\|_2^2}\\
 &\le\frac n2\sqrt{2k}\,\rho^{t/2}.
\end{align*}
Since $\pi$ is stationary, another application of convexity gives
\begin{align*}
 \left\|K^t(x,\cdot)-\pi\right\|_{\TV}
 &\le\sum_{y\in[k]^n}\pi(y)
 \left\|K^t(x,\cdot)-K^t(y,\cdot)\right\|_{\TV}\\
 &\le\frac n2\sqrt{2k}\,\rho^{t/2}.
\end{align*}
Taking the maximum over $x$ and using \eqref{eq:histogram-TV-contraction} proves \eqref{eq:coordinate-upper}. Finally, $\rho^{-1}=k(\theta+1)/(k-1)$; solving the upper bound for $t$ proves \eqref{eq:two-parameter-upper}.
\end{proof}

\begin{corollary}[Growing number of labels]
\label{cor:polynomial-upper}
Assume $k\ge2$. For fixed $\theta>0$ and $0<\varepsilon<1$,
\begin{equation}\label{eq:polynomial-upper}
 \overline t_{\mix}(\varepsilon)
 \le t_{\mix}(\varepsilon)
 \le
 \left\lceil
 2\log_{\theta+1}n+\log_{\theta+1}k
 +2\log_{\theta+1}(1/\varepsilon)-\log_{\theta+1}2
 \right\rceil.
\end{equation}
Consequently, if $k_n\le n^C$ for all sufficiently large $n$ and some fixed $C$, then
\[
 t_{\mix}(\varepsilon),\ \overline t_{\mix}(\varepsilon)
 =O_{\theta,\varepsilon,C}(\log n).
\]
More precisely, if $k_n=n^{\beta+o(1)}$ for some fixed $\beta\ge0$, then
\begin{equation}\label{eq:polynomial-colors}
 \overline t_{\mix}(\varepsilon)
 \le t_{\mix}(\varepsilon)
 \le(2+\beta)\log_{\theta+1}n+o(\log n).
\end{equation}
\end{corollary}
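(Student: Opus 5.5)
The corollary follows from Theorem~\ref{thm:coordinate-upper} by bounding the denominator from below and then doing the asymptotics. The first inequality $\overline t_{\mix}(\varepsilon)\le t_{\mix}(\varepsilon)$ comes directly from \eqref{eq:coordinate-upper}, because $\overline d(t)\le d(t)$ for every $t$.

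For the explicit bound \eqref{eq:polynomial-upper}, use
\[
 \log\!\left(\frac{k(\theta+1)}{k-1}\right)\ge\log(\theta+1),
\]
which holds since $k/(k-1)>1$. A smaller denominator only enlarges the ceiling in \eqref{eq:two-parameter-upper}. The numerator then expands as
\[
 2\log\!\left(n\sqrt{k/2}/\varepsilon\right)=2\log n+\log k-\log 2+2\log(1/\varepsilon).
\]
Dividing by $\log(\theta+1)$ gives exactly the four terms in \eqref{eq:polynomial-upper}, written in base $\theta+1$.

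The only point needing care is the sign of the denominator: $\theta>0$ gives $\log(\theta+1)>0$, so the direction of the inequality is preserved. Alternatively, one can solve $\frac n2\sqrt{2k}\,\rho^{t/2}\le\varepsilon$ directly after the bound $\rho\le(\theta+1)^{-1}$, which is the same estimate.

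The asymptotic statements then follow.
\begin{itemize}
\item If $k_n\le n^C$, then $\log_{\theta+1}k\le C\log_{\theta+1}n$, so the right side of \eqref{eq:polynomial-upper} is at most $(2+C)\log_{\theta+1}n+O_{\theta,\varepsilon}(1)$. This is $O_{\theta,\varepsilon,C}(\log n)$, where $n\ge2$ lets the additive constant be absorbed.
\item If $k_n=n^{\beta+o(1)}$, then $\log k_n=(\beta+o(1))\log n$. The right side of \eqref{eq:polynomial-upper} becomes $(2+\beta)\log_{\theta+1}n+o(\log n)+O_{\theta,\varepsilon}(1)$. The constant and the ceiling's $+1$ are absorbed into $o(\log n)$, which gives \eqref{eq:polynomial-colors}.
\end{itemize}
No step is a real obstacle. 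The main thing to check is that these error terms depend only on $\theta$ and $\varepsilon$, and not on $n$.
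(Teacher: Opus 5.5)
Your proposal is correct and follows the paper's own argument: in Theorem~\ref{thm:coordinate-upper}, replace the denominator $\log\bigl(k(\theta+1)/(k-1)\bigr)$ by the smaller $\log(\theta+1)$, expand the numerator, and read off the asymptotics from $\log k_n\le C\log n$ or $\log k_n=(\beta+o(1))\log n$. One small addition: enlarging the ceiling also uses that the numerator $2\log\bigl(n\sqrt{k/2}/\varepsilon\bigr)$ is positive, not only that the denominator is. This holds because $n\ge1$, $k\ge2$ and $\varepsilon<1$.
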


\begin{proof}
Since
\[
 \log\!\left(\frac{k(\theta+1)}{k-1}\right)
 \ge\log(\theta+1),
\]
Theorem~\ref{thm:coordinate-upper} gives \eqref{eq:polynomial-upper}. The remaining claims follow from $\log k_n\le C\log n$ eventually or $\log_{\theta+1}k_n=\beta\log_{\theta+1}n+o(\log n)$, respectively.
\end{proof}

\subsection{Entropy Lower Bounds}

Set $\mu:=\psi(\theta+1)-\psi(1)$ and $m_*:=\min\{n,k\}$.

The proof of the lower bound compares the slowly growing expected empirical entropy of the chain with its nearly maximal stationary value; normalized empirical entropy then serves as a test function for total variation.

For a probability vector $q=(q_1,\ldots,q_k)$, define
\[
 H(q):=-\sum_{a=1}^kq_a\log q_a,
\]
with $0\log0:=0$. For $x\in[k]^n$, set
\[
 \widehat p(x):=\frac1n(i_1(x),\ldots,i_k(x)),
 \qquad
 \widehat H(x):=H(\widehat p(x)).
\]
For $\bm{i}\in\Comp_{n,k}$, set
\[
 \overline H(\bm{i}):=H(\bm{i}/n),
 \qquad
 \widehat H=\overline H\circ\hist.
\]

\begin{theorem}[Entropy lower bound]\label{thm:growing-lower}
If $m_*\ge2$, then, for every $t\ge0$,
\begin{equation}\label{eq:growing-lower}
 d(t)
 \ge\overline d(t)
 \ge
 1-\frac{t\mu+\log(2+1/\theta)}{\log m_*}.
\end{equation}
Consequently, for every $0<\varepsilon<1$,
\begin{equation}\label{eq:growing-finite-mixing-lower}
 t_{\mix}(\varepsilon)
 \ge\overline t_{\mix}(\varepsilon)
 \ge
 \frac{(1-\varepsilon)\log m_*-\log(2+1/\theta)}
 {\mu}.
\end{equation}
\end{theorem}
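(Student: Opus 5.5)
Start the chain at a constant word, say $x_{\mathrm{const}}=(1,\ldots,1)$, so that $\widehat H(X_0)=0$. Use the normalized empirical entropy $f:=\widehat H/\log m_*$ as the test function; it takes values in $[0,1]$ because $\widehat p$ is supported on at most $m_*$ letters. For any $f$ with values in $[0,1]$ and any two laws $\nu,\pi$ we have $\|\nu-\pi\|_{\TV}\ge \pi(f)-\nu(f)$. Since $\widehat H=\overline H\circ\hist$, the same bound holds for the lumped chain started at $(n,0,\ldots,0)$, and \eqref{eq:histogram-TV-contraction} gives $d\ge\overline d$. It therefore suffices to prove two estimates:
\[
\mathbb E_{x_{\mathrm{const}}}\widehat H(X_t)\le t\mu
\qquad\text{and}\qquad
\pi(\widehat H)\ge \log m_*-\log(2+1/\theta).
\]
Subtracting and dividing by $\log m_*$ then gives \eqref{eq:growing-lower}. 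Solving $1-(t\mu+\log(2+1/\theta))/\log m_*\le\varepsilon$ for $t$ gives \eqref{eq:growing-finite-mixing-lower}.

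\emph{Upper bound on the mean entropy.} Use the Dirichlet product representation, Proposition~\ref{prop:finite-dirichlet}. Starting from a constant word, conditionally on $R_t$ the coordinates are i.i.d.\ with law $q_t:=e_1R_t$. By concavity of $H$ and Jensen, $\mathbb E[\widehat H(X_t)\mid R_t]\le H(\mathbb E[\widehat p\mid R_t])=H(q_t)$. It remains to show $\mathbb E H(q_t)\le t\mu$, which I would do by showing $\mathbb E H(qS)\le H(q)+\mu$ for every fixed probability vector $q$.

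Write $qS=\sum_a q_aS_{a,\bullet}$. The standard mixture bound gives $H(\sum_a q_a P_a)\le H(q)+\sum_a q_aH(P_a)$. Hence it suffices to show $\mathbb E H(D)\le\mu$ for $D\sim\Dirichlet(\theta/k,\ldots,\theta/k)$. This is a known computation: $\mathbb E[-D_b\log D_b]=\frac1k(\psi(\theta+1)-\psi(\theta/k+1))$, so
\[
\mathbb E H(D)=\psi(\theta+1)-\psi(1+\theta/k)\le\psi(\theta+1)-\psi(1)=\mu,
\]
using that $\psi$ is increasing. Iterating over $t$ steps via the independence of the $S_i$ gives $\mathbb E\widehat H(X_t)\le t\mu$, since $H(e_1)=0$.

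\emph{Lower bound on the stationary entropy.} By Proposition~\ref{prop:lumped-BE-kernel}, under $\pi$ the histogram is Dirichlet-multinomial. Equivalently, draw $p\sim\Dirichlet(\theta,\ldots,\theta)$ and then $n$ i.i.d.\ letters from $p$. Instead of $\widehat H$ I would use the Rényi-2 lower bound $H(\widehat p)\ge-\log\sum_a\widehat p_a^2$, and then Jensen for $-\log$:
\[
\pi(\widehat H)\ge-\log\pi\Bigl(\sum_a\widehat p_a^2\Bigr).
\]
The collision count is exactly computable:
\[
\sum_a\widehat p_a^2=\frac1n+\frac{n-1}{n}\,\pi(x_r=x_s),\qquad \pi(x_r=x_s)=c=\frac{\theta+1}{k\theta+1}
\]
by Proposition~\ref{prop:collision-eigenfunction}. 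Using $c\le(1+1/\theta)/k$ and $1/n\le 1/m_*$, $1/k\le 1/m_*$, we get
\[
\pi\Bigl(\sum_a\widehat p_a^2\Bigr)\le\frac{1}{m_*}+\frac{1+1/\theta}{m_*}=\frac{2+1/\theta}{m_*}.
\]
This yields exactly $\log m_*-\log(2+1/\theta)$, which matches the constant in the statement and supports this route.

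\emph{Main obstacle.} I expect the delicate step to be the entropy growth bound: justifying the mixture inequality inside expectations, and the Dirichlet entropy identity via the digamma function. The identity follows from differentiating the moment formula \eqref{eq:Dirichlet-moments}, $\mathbb E[D_b^{s}]=(\gamma_b)_{(s)}/(\gamma_0)_{(s)}$ in Gamma form, at $s=1$. Everything else reduces to the exact collision formula already available.
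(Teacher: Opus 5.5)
Your proposal is correct and follows the paper's proof essentially step for step: the same constant start and normalized-entropy test function, the same Dirichlet product representation with the mixture inequality and the digamma computation $h=\psi(\theta+1)-\psi(1+\theta/k)\le\mu$, and a stationary bound that differs only cosmetically. The paper writes $\widehat H(x)=\log n-\frac1n\sum_r\log i_{x_r}(x)$ and applies Jensen once using exchangeability, whereas you apply Jensen in two stages (collision entropy, then $-\log$ under $\pi$); both arrive at the same intermediate bound $-\log\bigl(\frac1n+\frac{n-1}{n}c\bigr)$, though your display $\sum_a\widehat p_a^2=\frac1n+\frac{n-1}{n}\pi(x_r=x_s)$ should read $\pi\bigl(\sum_a\widehat p_a^2\bigr)=\cdots$.
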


\begin{proof}
Every $\bm{i}\in\Comp_{n,k}$ has at most $m_*$ nonzero entries, so
\begin{equation}\label{eq:empirical-entropy-range}
 0\le\overline H(\bm{i})\le\log m_*.
\end{equation}

Let
\[
 x_\star:=(1,\ldots,1),
 \qquad
 \bm{i}_\star:=\hist(x_\star)=(n,0,\ldots,0).
\]
Since $\overline\pi$ is the pushforward of $\pi$ under $\hist$ and $\widehat H=\overline H\circ\hist$, \eqref{eq:histogram-intertwining} and its iterates give
\[
 \overline\pi(\overline H)
 =\pi(\widehat H)
 =\mathbb E_\pi\widehat H,
 \qquad
 (\overline K^t\overline H)(\bm{i}_\star)
 =(K^t\widehat H)(x_\star)
 =\mathbb E_{x_\star}\widehat H(X_t).
\]
The contraction bound \eqref{eq:histogram-TV-contraction} and the bounded-test-function characterization of total variation~\cite[Proposition~4.5]{LPW09}, applied to $\overline H/\log m_*$, therefore give
\begin{align}
 d(t)
 &\ge\overline d(t)
 \ge\left\|\overline K^t(\bm{i}_\star,\cdot)-\overline\pi\right\|_{\TV}\notag\\
 &\ge
 \frac{
  \overline\pi(\overline H)
  -(\overline K^t\overline H)(\bm{i}_\star)
 }{\log m_*}\notag\\
 &=
 \frac{
  \mathbb E_\pi\widehat H
  -\mathbb E_{x_\star}\widehat H(X_t)
 }{\log m_*}.
 \label{eq:entropy-TV-test}
\end{align}
Thus it remains to upper-bound the expected entropy at time $t$ from $x_\star$ and to lower-bound its stationary expectation.

Put
\[
 q_s:=e_1S_1\cdots S_s,
 \qquad q_0=e_1.
\]
If $Z\sim\Dirichlet(\theta/k,\ldots,\theta/k)$, differentiating the identity
\[
 \mathbb EZ_1^u
 =\frac{\Gamma(\theta)\Gamma(\theta/k+u)}
 {\Gamma(\theta/k)\Gamma(\theta+u)}
\]
at $u=1$ gives
\[
 \mathbb E[Z_1\log Z_1]
 =\frac1k\left[
 \psi\left(1+\frac\theta k\right)-\psi(\theta+1)
 \right].
\]
Hence
\begin{equation}\label{eq:row-entropy}
 h:=\mathbb EH(Z)
 =\psi(\theta+1)-\psi\left(1+\frac\theta k\right)
 \le\mu.
\end{equation}

The standard upper bound for the entropy of a mixture~\cite[Equation~(2)]{KT17}, applied with mixing weights $q$ and component distributions $B_{a,\bullet}$, gives
\begin{equation}\label{eq:entropy-mixture}
 H(qB)\le H(q)+\sum_{a=1}^kq_aH(B_{a,\bullet})
\end{equation}
for every probability vector $q$ and every row-stochastic matrix $B$. Since $q_s=q_{s-1}S_s$, while $q_{s-1}$ is independent of $S_s$ and all rows of $S_s$ have the same law, \eqref{eq:entropy-mixture} gives
\[
 \mathbb EH(q_s)\le\mathbb EH(q_{s-1})+h.
\]
Iterating from $H(q_0)=0$ yields
\begin{equation}\label{eq:entropy-growth}
 \mathbb EH(q_t)\le t h\le t\mu.
\end{equation}

By \eqref{eq:conditional-product}, conditionally on $q_t$ the coordinates of $X_t$, started from $x_\star$, are independent with common law $q_t$. Thus
\[
 \mathbb E_{x_\star}[\widehat p(X_t)\mid q_t]=q_t.
\]
Concavity of entropy and \eqref{eq:entropy-growth} imply
\begin{equation}\label{eq:dynamic-entropy-upper}
 (\overline K^t\overline H)(\bm{i}_\star)
 =\mathbb E_{x_\star}\widehat H(X_t)
 \le\mathbb EH(q_t)\le t\mu.
\end{equation}

Each label appearing $i_a(x)$ times contributes $i_a(x)$ copies of $\log i_a(x)$ to the sum below, so
\begin{equation}\label{eq:entropy-class-identity}
 \widehat H(x)
 =\log n-\frac1n\sum_{r=1}^n\log i_{x_r}(x).
\end{equation}
The stationary law \eqref{eq:BE-pi} is exchangeable. Since
\[
 i_{x_1}(x)=1+\sum_{r=2}^n\mathbf1_{\{x_r=x_1\}},
\]
Proposition~\ref{prop:collision-eigenfunction} gives
\begin{align*}
 \mathbb E_{\pi}i_{x_1}(x)
 &=1+(n-1)\pi(x_1=x_2)\\
 &=1+(n-1)\frac{\theta+1}{k\theta+1}
 \le\frac{n}{m_*}\left(2+\frac1\theta\right).
\end{align*}
The last inequality follows by considering separately $k\le n$ and $k\ge n$. Exchangeability in \eqref{eq:entropy-class-identity} and Jensen's inequality therefore give
\begin{align}
 \overline\pi(\overline H)
 =\mathbb E_{\pi}\widehat H
 &=\log n-\mathbb E_\pi\log i_{x_1}(x)\notag\\
 &\ge\log n-\log\mathbb E_\pi i_{x_1}(x)
 \ge\log m_*-\log\left(2+\frac1\theta\right).
 \label{eq:stationary-entropy-lower}
\end{align}

Substituting \eqref{eq:dynamic-entropy-upper} and \eqref{eq:stationary-entropy-lower} into \eqref{eq:entropy-TV-test} proves \eqref{eq:growing-lower}.

Taking $t=\overline t_{\mix}(\varepsilon)$ and rearranging the lower bound for $\overline d(t)$ proves the second inequality in \eqref{eq:growing-finite-mixing-lower}; the first follows from $d(t)\ge\overline d(t)$.
\end{proof}

\begin{proof}[Proof of Theorem~\ref{BEbound}]
The upper bound, collision lower bound, spectral identity, and entropy lower bound follow from Theorem~\ref{thm:coordinate-upper}, Proposition~\ref{prop:collision-eigenfunction}, Theorem~\ref{thm:exact-second-eigenvalue}, and Theorem~\ref{thm:growing-lower}, respectively. If $k_n=n^{\beta+o(1)}$ with $\beta>0$, then $\log\min\{n,k_n\}=\Theta(\log n)$, so \eqref{eq:growing-finite-mixing-lower} and Corollary~\ref{cor:polynomial-upper} give $t_{\mix}(\epsilon)=\Theta_{\theta,\beta,\epsilon}(\log n)$.
\end{proof}

\subsection{Consequences for Parking Functions}

In this section, we apply our results from the previous section to the Burnside process on $\PF_n$. The following two corollaries provide upper and lower bounds on the mixing time. 

\begin{corollary}[Parking-function second eigenvalue and upper bound]
\label{thm:main-upper}
For $n\ge2$,
\begin{equation}
 \lambda_{2,\PF} = \lambda_2(K_{\PF})
 =\frac{n}{2(n+1)}.
 \label{eq:PF-spectrum}
\end{equation}
For every $t\ge1$,
\begin{equation}
 d_{K_{\PF}}(t)
 \le
 n\sqrt{\frac{n+1}{2}}\lambda_{2,\PF}^{t/2}.
 \label{eq:main-upper}
\end{equation}
Finally, for every fixed $0<\varepsilon<1$,
\begin{equation}\label{eq:PF-mixing-upper}
 t_{\mix,K_{\PF}}(\varepsilon)
 \le3\log_2n+O_\varepsilon(1).
\end{equation}
\end{corollary}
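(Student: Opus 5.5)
The plan is to transfer everything from the ordinary Bose--Einstein chain on $[n+1]^n$ (that is, $\theta=1$, $k=n+1$) through the exact tensor factorization of Theorem~\ref{thm:BE-PF-transfer}.

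\emph{Eigenvalue.} First, Theorem~\ref{thm:exact-second-eigenvalue} with $\theta=1$, $k=n+1$ gives
\[
\lambda_2(K_{\BE})=\rho=\frac{n}{2(n+1)}.
\]
By Theorem~\ref{thm:BE-PF-transfer}, $K_{\BE}=K_{\PF}\otimes U_H$, so the nonzero eigenvalues of $K_{\BE}$ and $K_{\PF}$ coincide with multiplicity. Both kernels are positive semidefinite, so neither has negative eigenvalues. Hence the second largest eigenvalue of $K_{\PF}$ is $\rho$, provided $\rho$ is attained by $K_{\PF}$ and not only by the $U_H$ factor. This holds because $U_H$ contributes only the eigenvalues $1$ and $0$, and $\rho\neq0$ for $n\ge1$. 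Alternatively, the collision eigenfunction $F_{r,s}$ is invariant under global shifts, so it descends to $\PF_n$ directly.

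\emph{Upper bound.} Next, \eqref{eq:PF-BE-distance} says that for each $u\in\PF_n$, viewed as $u=\Psi(u,0)\in[n+1]^n$ with $\Phi(u)=u$, we have
\[
\|K_{\PF}^t(u,\cdot)-\pi_{\PF}\|_{\TV}=\|K_{\BE}^t(u,\cdot)-\pi_{\BE}\|_{\TV}\le d_{K_{\BE}}(t).
\]
Theorem~\ref{thm:coordinate-upper} with $k=n+1$ bounds the right-hand side by
\[
\frac n2\sqrt{2(n+1)}\,\rho^{t/2}=n\sqrt{\frac{n+1}{2}}\,\lambda_{2,\PF}^{t/2}.
\]
Taking the maximum over $u$ gives \eqref{eq:main-upper}.

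\emph{Mixing time.} Finally, for \eqref{eq:PF-mixing-upper} I solve
\[
n\sqrt{(n+1)/2}\,\rho^{t/2}\le\varepsilon,
\]
which gives
\[
t\ge\frac{2\log\bigl(n\sqrt{(n+1)/2}/\varepsilon\bigr)}{\log(1/\rho)}.
\]
Since $1/\rho=2(n+1)/n\ge2$, I have $\log(1/\rho)\ge\log 2$. The numerator is $3\log n+O_\varepsilon(1)$, because $\log(n\sqrt{n+1})=\tfrac32\log n+O(1)$. Dividing by $\log 2$ gives $t_{\mix}\le3\log_2 n+O_\varepsilon(1)$ after accounting for the ceiling.

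The only mildly delicate step is the spectral transfer. There I will make explicit that positive semidefiniteness rules out any negative eigenvalue exceeding $\rho$ in absolute value, and that the tensor-product spectrum is $\{\lambda\cdot\mu:\lambda\in\operatorname{spec}(K_{\PF}),\ \mu\in\{1,0\}\}$. This pins $\lambda_2(K_{\PF})$ to exactly $\rho$.
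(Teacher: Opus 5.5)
Your proposal is correct and is essentially the paper's proof: specialize Theorems~\ref{thm:exact-second-eigenvalue} and~\ref{thm:coordinate-upper} to $\theta=1$, $k=n+1$ (so $\rho=n/(2(n+1))$ and $\tfrac n2\sqrt{2k}=n\sqrt{(n+1)/2}$), then transfer through the tensor factorization $K_{\BE}=K_{\PF}\otimes U_H$ of Theorem~\ref{thm:BE-PF-transfer}. You also spell out two steps the paper leaves implicit: since $U_H$ contributes only the eigenvalues $1$ and $0$, the nonzero eigenvalue $\rho$ must be an eigenvalue of $K_{\PF}$; and $\log(1/\rho)\ge\log 2$ turns the numerator $3\log n+O_\varepsilon(1)$ into the bound $3\log_2 n+O_\varepsilon(1)$.
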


\begin{proof}
Set $\theta=1$ and $k=n+1$ in Theorems~\ref{thm:exact-second-eigenvalue} and~\ref{thm:coordinate-upper}, and apply Theorem~\ref{thm:BE-PF-transfer}.
\end{proof}

By Proposition~\ref{prop:collision-eigenfunction} and the two bijections, for distinct $r,s\in[n]$ the corresponding $\lambda_{2,\PF}$-eigenfunctions are $u\mapsto\mathbf1_{\{u_r=u_s\}}-2/(n+2)$ on $\PF_n$ and $L\mapsto\mathbf1_{\{r,s\text{ lie in the same vertical run of }L\}}-2/(n+2)$ on $\LcDc_n$.

\begin{corollary}[Parking-function lower bound]\label{thm:PF-log-lower}
For $n\ge2$ and $t\ge1$,
\begin{equation}\label{eq:PF-TV-lower}
 d_{K_{\PF}}(t)
 \ge
 \max\left\{
  \frac{n}{n+2}\lambda_{2,\PF}^t,\,
  1-\frac{t+\log 3}{\log n}
 \right\}.
\end{equation}
Consequently, for every fixed $0<\varepsilon<1$,
\begin{equation}\label{eq:PF-mixing-lower}
 t_{\mix,K_{\PF}}(\varepsilon)
 \ge(1-\varepsilon)\log n-\log 3
\end{equation}
for all sufficiently large $n$.
\end{corollary}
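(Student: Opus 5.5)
The plan is to transfer the two lower bounds from the Bose--Einstein chain on $[n+1]^n$ at $\theta=1$, $k=n+1$, to $K_{\PF}$. The transfer uses the exact distance identity \eqref{eq:PF-BE-distance} of Theorem~\ref{thm:BE-PF-transfer}. That identity equates $\|K_{\BE}^t(x,\cdot)-\pi_{\BE}\|_{\TV}$ with $\|K_{\PF}^t(\Phi(x),\cdot)-\pi_{\PF}\|_{\TV}$ for every $x\in[n+1]^n$. Since $\Phi$ is surjective onto $\PF_n$, taking maxima gives $d_{K_{\BE}}(t)=d_{K_{\PF}}(t)$ for all $t\ge1$. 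Hence any lower bound on $d_{K_{\BE}}(t)$ is also a lower bound on $d_{K_{\PF}}(t)$.

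For the spectral term, I will specialize \eqref{eq:collision-lower} of Proposition~\ref{prop:collision-eigenfunction} to $\theta=1$ and $k=n+1$. This gives $\rho=\frac{n}{2(n+1)}=\lambda_{2,\PF}$ by Corollary~\ref{thm:main-upper}. The prefactor becomes
\[
\frac{\max\{2,n\}}{n+2}=\frac{n}{n+2}
\]
for $n\ge2$, which matches the first term of \eqref{eq:PF-TV-lower}.

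For the entropy term, I will apply Theorem~\ref{thm:growing-lower} with $\theta=1$, $k=n+1$. Then $m_*=\min\{n,n+1\}=n\ge2$. Also $\mu=\psi(2)-\psi(1)=1$, using the recurrence $\psi(z+1)=\psi(z)+1/z$. Finally $\log(2+1/\theta)=\log3$. This yields $d_{K_{\BE}}(t)\ge 1-(t+\log3)/\log n$, and the transfer above gives the second term. Combining the two bounds proves \eqref{eq:PF-TV-lower}.

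For \eqref{eq:PF-mixing-lower}, I will take $t=t_{\mix,K_{\PF}}(\varepsilon)$. Two cases arise. If $t\ge1$, then $\varepsilon\ge d(t)\ge1-(t+\log3)/\log n$, which rearranges to $t\ge(1-\varepsilon)\log n-\log3$. If $t=0$, then $d(0)\le\varepsilon<1$ would be needed. But $d(0)=\max_x(1-\pi_{\PF}(x))$, which is close to $1$ for large $n$ since $\pi_{\PF}(x)\le\max\prod_a i_a!/(n!C_n)=1/C_n$. So $t=0$ is impossible for large $n$, and this is where the phrase ``for all sufficiently large $n$'' is used.

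The only delicate point is verifying $\mu=1$ and that $m_*=n$. Everything else is a direct substitution.
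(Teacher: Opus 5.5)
Your proposal is correct and follows the paper's own proof. Like the paper, you specialize Proposition~\ref{prop:collision-eigenfunction} and Theorem~\ref{thm:growing-lower} to $\theta=1$, $k=n+1$ (so $\rho=\lambda_{2,\PF}$, $m_*=n$, $\mu=1$) and transfer both bounds to $K_{\PF}$ through the distance identity of Theorem~\ref{thm:BE-PF-transfer}; your separate handling of the case $t_{\mix,K_{\PF}}(\varepsilon)=0$ fills in a step the paper leaves implicit.
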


\begin{proof}
Set $\theta=1$ and $k=n+1$ in Proposition~\ref{prop:collision-eigenfunction} and Theorem~\ref{thm:growing-lower}, and apply Theorem~\ref{thm:BE-PF-transfer}.
\end{proof}

\begin{proof}[Proof of Theorem~\ref{mainthm}]
Combine Corollaries~\ref{thm:main-upper} and~\ref{thm:PF-log-lower}.
\end{proof}

We also have the following mixing time result for the lumped process on $\IPF_n$. 

\begin{corollary}[Lumped parking-function chain]\label{cor:lumped-PF-bounds}
For $n\ge2$,
\[
 \lambda_2(\overline K_{\PF})
 =\frac{n}{2(n+1)}
 =\lambda_{2,\PF}.
\]
Moreover, for every $t\ge1$,
\[
 \max\left\{
  \frac{n}{n+2}\lambda_{2,\PF}^t,\,
  1-\frac{t+\log3}{\log n}
 \right\}
 \le d_{\IPF}(t)
 \le n\sqrt{\frac{n+1}{2}}\lambda_{2,\PF}^{t/2}.
\]
Consequently, for every fixed $0<\varepsilon<1$,
\[
 t_{\mix,\IPF}(\varepsilon)=\Theta_\varepsilon(\log n).
\]
\end{corollary}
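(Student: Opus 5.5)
The plan is to specialize the general lumped results at $\theta=1$, $k=n+1$ and then transfer them to $\overline K_{\PF}$ through Theorem~\ref{thm:BE-PF-transfer}.

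\emph{Eigenvalue.} Theorem~\ref{thm:exact-second-eigenvalue} gives $\lambda_2(\overline K_{\BE})=\rho=\frac{n}{(n+1)\cdot 2}$. Theorem~\ref{thm:BE-PF-transfer} says the lumped kernels $\overline K_{\BE}$ and $\overline K_{\PF}$ have the same nonzero eigenvalues with multiplicity. Both chains are lumpings of twisted Burnside processes, so neither has negative eigenvalues. Hence every eigenvalue of $\overline K_{\PF}$ other than $1$ is at most $\rho$.

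To show that $\rho$ is actually attained on $\IPF_n$, I would not rely only on the multiplicity bookkeeping. Instead, take the descended eigenfunction $J-c$, with $c=2/(n+2)$, restricted to histograms whose last coordinate is $0$. This restriction is an eigenfunction of $\overline K_{\PF}$ with eigenvalue $\rho$: by Theorem~\ref{thm:BE-PF-transfer}, $\overline K_{\PF}$ is $\overline K_{\BE}$ summed over shift classes, and $J$ is invariant under cyclic relabeling of the values. It is nonconstant on $\IPF_n$, since $J(n,0,\ldots)=1$ while $J(n-1,1,0,\ldots)=(n-2)/n$, and both histograms are increasing parking functions. It is also nonzero because $J\neq c$ pointwise.

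\emph{Bounds.} By \eqref{eq:lumped-PF-BE-distance}, $d_{\IPF}(t)=\overline d(t)$ at $\theta=1$, $k=n+1$. The three bounds then come from earlier results.
\begin{itemize}
\item The upper bound follows from Theorem~\ref{thm:coordinate-upper}: $\frac n2\sqrt{2(n+1)}\rho^{t/2}=n\sqrt{(n+1)/2}\,\rho^{t/2}$.
\item The collision lower bound \eqref{eq:lumped-collision-lower} gives $\frac{\theta(k-1)}{k\theta+1}\rho^t=\frac{n}{n+2}\rho^t$.
\item The entropy lower bound, Theorem~\ref{thm:growing-lower}, applies with $m_*=\min\{n,n+1\}=n$ and $\mu=\psi(2)-\psi(1)=1$, and gives $1-(t+\log 3)/\log n$.
\end{itemize}

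\emph{Mixing time.} The $\Theta_\varepsilon(\log n)$ conclusion follows by solving these inequalities for $t$. The upper bound gives $t\le 3\log_2 n+O_\varepsilon(1)$, as in Corollary~\ref{thm:main-upper}. The entropy bound gives $t\ge(1-\varepsilon)\log n-\log 3$.

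\emph{Main obstacle.} The only delicate point is the eigenvalue transfer for the lumped chains. One must ensure that $\rho$ is not lost when passing from $\Comp_{n,n+1}$ to $\IPF_n$, which the explicit restricted eigenfunction above settles. The rest is direct substitution.
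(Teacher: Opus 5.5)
Your proposal is correct and follows the paper's proof, which specializes Proposition~\ref{prop:collision-eigenfunction} and Theorems~\ref{thm:exact-second-eigenvalue}, \ref{thm:coordinate-upper}, \ref{thm:growing-lower} at $\theta=1$, $k=n+1$ and then transfers everything through Theorem~\ref{thm:BE-PF-transfer}. Your restricted-eigenfunction check is harmless but unnecessary: equality of the nonzero spectra with multiplicity already makes $\rho>0$ an eigenvalue of $\overline K_{\PF}$. Also, your claim that $J\neq c$ pointwise is false for $n=4$ (for example, $J(2,2,0,0,0)=1/3=c$); the function is still nonzero because $J(n,0,\ldots,0)=1\neq c$.
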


\begin{proof}
Set $\theta=1$ and $k=n+1$ in Proposition~\ref{prop:collision-eigenfunction} and Theorems~\ref{thm:exact-second-eigenvalue}, \ref{thm:coordinate-upper}, and~\ref{thm:growing-lower}, and apply Theorem~\ref{thm:BE-PF-transfer}.
\end{proof}

Under the induced bijection $\IPF_n\to\Dc_n$, the same conclusions hold for the lumped Burnside process on $\Dc_n$.

\Address

\end{document}